\newtheoremstyle{kai}
{3pt}{3pt}{}{}{\bfseries}{.}{.5em}{}
\def\EquationsBySection{\def\theequation
{\thesection.\arabic{equation}}%
\@addtoreset{equation}{section}}
\newcommand\old[1]{}
\newcommand{\pend}{\hfill \thicklines \framebox(6.6,6.6)[l]{}}
\renewenvironment{proof}{\noindent {\it  Proof.} \rm}{\pend}
\newtheorem{theorem}{Theorem}[section]
\newtheorem{lemma}{Lemma}[section]
\newtheorem{corollary}{Corollary}[section]
\newtheorem{proposition}{Proposition}[section]
\newtheorem{remark}{Remark}[section]
\newtheorem{definition}{Definition}[section]
\newtheorem{example}{Example}[section]
\begin{document}
\pagestyle{plain}
\title
{\bf Stationarity of Stochastic Linear Equations  with Additive Noise and  Delays in the Unbounded Drift Terms}
\author{
\\
Kai Liu
\\
\\
\small{Department of Mathematical Sciences,}\\
\small{The University of Liverpool,}\\
\small{Peach Street, Liverpool, L69 7ZL, U.K.}\\
\small{E-mail: k.liu@liv.ac.uk}\\}

\date{}
\maketitle

\noindent {\bf Abstract:} This paper continues the study of \cite{kl08(2), kl11} for
 stationary solutions of stochastic linear retarded functional differential equations with the emphasis on delays  which appear in those terms including  spatial partial derivatives. As a consequence, the associated stochastic equations have unbounded operators acting on the discrete or distributed delayed terms, while the operator acting on the instantaneous term generates a strongly continuous semigroup. We present conditions on the delay systems to obtain a unique stationary solution  by combining spectrum analysis of unbounded operators and stochastic calculus. A few instructive cases are analyzed in detail to clarify the underlying complexity in the study of systems with unbounded delayed operators. 

\vskip 50pt
\noindent {\bf Keyword:} Stationary solution; Discrete and distributed delay; Retarded Ornstein-Uhlenbeck process.
\vskip 10pt

\noindent{\bf 2000 Mathematics Subject Classification(s):} 60H15, 60G15, 60H05.

\newpage
\section{Introduction}

Let $X$ be a separable  real Banach space with  norm $\|\cdot\|_X$ and $A:\, {\mathscr D}(A)\subset X\to X$ generates a strongly continuous semigroup $e^{tA}$, $t\ge 0$, on $X$. Suppose that $Z$ is another Banach space, equipped with the norm $\|\cdot\|_Z$, such that ${\mathscr D}(A)\hookrightarrow Z\hookrightarrow  X$, i.e., the injection $\hookrightarrow$ is dense and continuous.  
Let  $W= (Z, X)_{1/2, 2}$ be the standard real interpolation space  between $Z$ and $X$ (see, e.g., \cite{ht1997}). If $Z=X$, then we take  $W=X$.  
Let $r\ge 0$ and  $L^2_r := L^2([-r, 0]; Z)$. We denote by ${\cal X}$  the product  space $W\times L^2_r$ with  norm
\[
\|\Phi\|_{{\cal X}}=\|\phi_0\|_W + \|\phi_1\|_{L^2_r}\hskip 15pt \hbox{for all}\hskip 15pt\Phi= (\phi_0,\phi_1)\in {\cal X}.\] 

Consider  the following system which is described by a stochastic linear retarded functional differential equation on $X$,
\begin{equation}
\label{17/05/06(1)}
\begin{cases}
dy(t) = Ay(t)dt + Fy_t dt+ f(t)dB(t),\hskip 15pt t\ge 0,\\
y(0) = \phi_0,\hskip 5pt y_0=\phi_1,\hskip 5pt \Phi=  (\phi_0, \phi_1)\in{\cal X},\\
\end{cases}
\end{equation} 
where $y_t(\theta) := y(t+\theta)$, called {\it (history) segment},  for any $\theta\in [-r, 0]$ and $t\ge 0$, $f$ is an appropriate function and $B$ is a Brownian motion defined on some probability space $(\Omega, {\mathscr F}, {\mathbb P})$. Here the delay term $F:\,C([-r, 0]; Z)\to X$ is a  bounded linear operator which admits the following representation
\begin{equation}
\label{14/10/13(1)}
F\varphi = \int^0_{-r} d\eta(\theta)\varphi(\theta)\hskip 20pt \forall\,\varphi \in C([-r, 0]; Z),
\end{equation}
where $\eta:\,[-r, 0]\to {\mathscr L}(Z, X)$, the family of all bounded and linear operators from $Z$ to $X$,  is of bounded variation. 

Although operator $F$ is defined only on continuous functions, the quantity $Fy_t$ still makes sense as function of $t$ with values in $X$ for each $y(\cdot)$ in $L^2([-r, T]; Z)$. Indeed, we have the following result whose proof is referred to Appendix.
\begin{proposition}
\label{14/10/13(10)}
Let $T\ge 0$ and $y(\cdot)\in L^2([-r, T]; Z)$, then the function $t\to Fy_t$ belongs to $L^2([0, T]; X)$. Moreover, there exists a constant $C>0$ such that 
\begin{equation}
\label{14/10/13(3)}
\int^T_0 \|Fy_t\|^2_X dt \le C\int^T_{-r} \|y(t)\|^2_Z dt.
\end{equation}
\end{proposition}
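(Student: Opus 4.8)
The plan is to reduce everything to a single scalar control measure coming from the bounded variation of $\eta$. The first step is to record that, since $\eta:[-r,0]\to\mathscr{L}(Z,X)$ has bounded variation, its total variation induces a finite positive Borel measure $\mu$ on $[-r,0]$ with total mass $V:=\mu([-r,0])=\mathrm{Var}_{[-r,0]}(\eta)<\infty$. For every $\varphi\in C([-r,0];Z)$ the Riemann--Stieltjes sums defining $F\varphi$ are dominated, termwise in the operator norm, by the corresponding variation increments of $\eta$ weighted by $\|\varphi(\theta)\|_Z$, so passing to the limit yields the pointwise bound $\|F\varphi\|_X\le\int_{-r}^0\|\varphi(\theta)\|_Z\,d\mu(\theta)$.

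The genuine difficulty is that $F$ is a priori only defined on continuous functions, whereas for $y\in L^2([-r,T];Z)$ the history segment $y_t(\theta)=y(t+\theta)$ need not be continuous in $\theta$. I would resolve this by using the bound above to extend $F$ to a bounded linear map $L^1([-r,0];Z;\mu)\to X$, continuous functions being dense there, with the same estimate $\|F\varphi\|_X\le\|\varphi\|_{L^1(\mu)}$ (equivalently, $Fy_t$ can be defined directly via dominated convergence along a continuous approximation of $y_t$). To see that $Fy_t$ makes sense for a.e.\ $t$, I would first verify integrability: a Tonelli argument, combined with Cauchy--Schwarz in $t$ and the finiteness of $\int_{-r}^T\|y\|_Z^2\,ds$, shows that $\int_0^T\!\int_{-r}^0\|y(t+\theta)\|_Z\,d\mu(\theta)\,dt<\infty$, so the inner integral, i.e.\ $\|y_t\|_{L^1(\mu)}$, is finite for almost every $t$; hence $y_t\in L^1([-r,0];Z;\mu)$ and $Fy_t$ is well defined there. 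Measurability of $t\mapsto Fy_t$ then follows by approximating $y$ in $L^2([-r,T];Z)$ by continuous functions and invoking the continuity of the extended $F$.

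With $Fy_t$ thus defined, the estimate \eqref{14/10/13(3)} is a short computation. Applying the pointwise bound and Cauchy--Schwarz against the measure $\mu$ of total mass $V$ gives, for a.e.\ $t$,
\[
\|Fy_t\|_X^2\le\Big(\int_{-r}^0\|y(t+\theta)\|_Z\,d\mu(\theta)\Big)^2\le V\int_{-r}^0\|y(t+\theta)\|_Z^2\,d\mu(\theta).
\]
Integrating in $t$ over $[0,T]$, interchanging the order of integration by Tonelli, and then substituting $s=t+\theta$ in the inner integral --- noting that $[\theta,T+\theta]\subseteq[-r,T]$ for every $\theta\in[-r,0]$ --- I obtain
\[
\int_0^T\|Fy_t\|_X^2\,dt\le V\int_{-r}^0\Big(\int_{-r}^T\|y(s)\|_Z^2\,ds\Big)d\mu(\theta)=V^2\int_{-r}^T\|y(s)\|_Z^2\,ds,
\]
so the claim holds with $C=V^2=(\mathrm{Var}_{[-r,0]}\eta)^2$. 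The only step requiring real care is the extension and measurability discussion of the second paragraph; the inequality itself is then routine.
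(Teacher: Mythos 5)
Your proof is correct, and its central computation is exactly the paper's: the pointwise bound $\|F\varphi\|_X\le\int_{-r}^0\|\varphi(\theta)\|_Z\,d|\eta|(\theta)$, Cauchy--Schwarz against the total variation measure, and Fubini--Tonelli after the shift $s=t+\theta$, yielding the same constant $C=|\eta|([-r,0])^2=V^2$. Where you diverge is in how $Fy_t$ is given meaning for $y\in L^2([-r,T];Z)$. The paper proves the estimate only for $y\in C([-r,T];Z)$ and then extends the \emph{composite} map $y\mapsto (t\mapsto Fy_t)$ by density of $C([-r,T];Z)$ in $L^2([-r,T];Z)$, so that $Fy_\cdot$ is defined only as an equivalence class in $L^2([0,T];X)$. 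You instead extend $F$ itself to a bounded operator $L^1([-r,0];Z;|\eta|)\to X$, check via Tonelli that $y_t\in L^1(|\eta|)$ for a.e.\ $t$, and so define $Fy_t$ \emph{pointwise} almost everywhere, recovering measurability of $t\mapsto Fy_t$ by continuous approximation. Your route is slightly longer but buys a genuinely stronger conclusion --- a concrete a.e.\ pointwise definition of $Fy_t$ rather than an abstract $L^2$-limit --- which is arguably closer to the literal assertion that ``$Fy_t$ still makes sense as a function of $t$''; the paper's density argument buys brevity at the cost of leaving $Fy_t$ defined only up to null sets of $t$ via the extension. Both arguments are sound, and your extension and measurability discussion, though terse (the a.e.\ convergence along a subsequence of continuous approximants is the implicit step), is complete in outline.
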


A typical example satisfying (\ref{14/10/13(1)}) and thus Proposition \ref{14/10/13(10)} is given below.  
Assume that $\eta$ is the Stieltjes measure defined by 
\begin{equation}
\label{19/11/07(12)}
\eta(\tau)=-\sum^m_{i=1}{\bf 1}_{(-\infty, \,-r_i]}(\tau)A_i -\int^0_\tau A_0(\theta)d\theta,\hskip 15pt \tau\in [-r, 0],
\end{equation}
where  ${\bf 1}_{(-\infty, -r_i]}$ denotes the indicator function on $(-\infty, -r_i]$,
$0\le r_i \le r$, $A_i\in {\mathscr L}({\mathscr D}(A), X)$, $i=1,\cdots, m$, and $A_0(\cdot)\in L^2([-r, 0]; {\mathscr L}({\mathscr D}(A), X))$. Let $Z={\mathscr D}(A)$, endowed with the grath norm of $A$, and define a linear mapping ${F}: C([-r, 0]; {\mathscr D}(A))\to X$ by
\begin{equation}
\label{19/11/07(13)}
{F}\varphi =\int^0_{-r} d\eta(\theta)\varphi(\theta)=\sum^m_{i=1}A_i\varphi (-r_i) + \int^0_{-r} A_0(\theta)\varphi(\theta)d\theta,\hskip 20pt \forall\, \varphi\in C([-r, 0]; {\mathscr D}(A)).
\end{equation}
It is clear that $F:\, C([-r, 0]; {\mathscr D}(A)) \to X$ is linear and bounded.  For any fixed $T\ge 0$, $y\in C([-r, T]; {\mathscr D}(A))$, one can easily derive by using H\"older inequality and Fubini's theorem that
\[
\begin{split}
\Big(\int^{T}_0 &\|Fy_s\|^2_Xds\Big)^{1/2}\\
&\le \Big[\sum^m_{i=1} \|A_i\|_{{\mathscr L}({\mathscr D}(A), X)} + \Big(\int^0_{-r} \|A_0(\theta)\|^2_{{\mathscr L}({\mathscr D}(A), X)}d\theta\Big)^{1/2}\cdot r^{1/2}\Big] \Big(\int^{T}_{-r} \|y(s)\|_{{\mathscr D}(A)}^2 ds\Big)^{1/2}.
\end{split}
\]
Since $C([-r, T]; {\mathscr D}(A))$ is dense in $L^2([-r, T]; {\mathscr D}(A))$,  the delay operator ${F}$  is extendible so that (\ref{14/10/13(3)}) (here, $Z={\mathscr D}(A)$) is valid for all $y\in L^2([-r, T]; {\mathscr D}(A))$.

If $Z=X$,  the associated delay operator $F$ is  bounded, a case considered in  \cite{kl08(2), kl09(2), kl11}. If $Z\not= X$, we deal with, in essence, unbounded delay terms. In this case, we futher assume that $A$ generates an analytic semigroup $e^{tA}$, $t\ge 0$, on appropriate spaces and  meanwhile employ  the theory of interpolation spaces.

\begin{example}\rm  
Let $X=H$ be a Hilbert space and  $A$ generate an analytic semigroup $e^{tA}$, $t\ge 0$, on $H$. Consider a Stieltjes measure $\eta$ given by 
\begin{equation}
\label{11/08/2013(10655)}
\eta(\theta) = -{\bf 1}_{(-\infty, -r]}(\theta)\alpha A_1 - \int^0_\theta \beta(\tau)A_2d\tau:\, {\mathscr D}(A)\to H,\hskip 15pt \theta\in [-r, 0],  
\end{equation}
where $\alpha\in {\mathbb R}$ and the real-valued function $\beta(\cdot)$ is assumed to be $L^2$-integrable on $[-r, 0]$, i.e., $\beta\in L^2([-r, 0]; {\mathbb R})$. The delayed operator $F$  is explicitly written as 
\[
Fy_t = \int^0_{-r}d\eta(\theta)y(t+\theta) =   \alpha A_1y(t-r) + \displaystyle\int^0_{-r}\beta(\theta)A_2y(t+\theta)d\theta,\hskip 20pt t\ge 0.\]
In this case, we put $Z={\mathscr D}(A)$ and let $W$ denote the intermediate space $({\mathscr D}(A), H)_{1/2, 2}$ between ${\mathscr D}(A)$ and $H$ given by:
\[
W=\Big\{x\in H:\, \int^\infty_0 \|Ae^{tA}x\|^2_H dt<\infty\Big\}\]
and 
\[
\|x\|_W =\Big(\|x\|^2_H + \int^\infty_0 \|Ae^{tA}x\|^2_H dt\Big)^{1/2},\hskip 20pt x\in W.\]
  In particular, we have ${\cal X}=W\times L^2([-r, 0]; {\mathscr D}(A))$. 

For example,  consider  an initial-boundary value problem of Dirichlet type for the stochastic retarded Laplace equation:
\begin{equation}
\label{11/10/2013(2)}
\begin{cases}
\displaystyle\frac{\partial y(t, x)}{\partial t} = \Delta y(t, x) + \gamma \Delta y(t-r, x) + \mu \displaystyle\int^0_{-r} \Delta y(t+\theta, x)d\theta + f(t, x) \dot B(t)\,\,\,\hbox{on}\,\,\, [0, T]\times {\cal O},\\
y(t, x) =y_0(t, x),\hskip 20pt (t, x)\in [-r, 0]\times {\cal O},\\
y(0, x)=\varphi(x),\hskip 20pt x\in {\cal O},\\
y(t, x)=0,\hskip 20pt (t, x)\in [-r, T]\times \partial{\cal O},.
\end{cases}
\end{equation}
Here ${\cal O}$ is a bounded open subset of ${\mathbb R}^n$ with smooth boundary $\partial {\cal O}$, $\gamma$, $\mu\in {\mathbb R}$, $r>0$, $T>0$ and $y_0$ and $\varphi$ are appropriately given functions.
We can rewrite  (\ref{11/10/2013(2)}) as an initial boundary problem (\ref{17/05/06(1)}) in the Hilbert space $X=L^2({\cal O})$ by setting
\begin{equation}
\begin{cases}
A= \Delta,\\
{\mathscr D}(A) = W^{2, 2}({\cal O})\cap W^{1, 2}_0({\cal O}),\\
A_1 =\gamma \Delta,\hskip 20pt A_2 =\mu\Delta.
\end{cases}
\end{equation}
On this occasion, the interpolation space $({\mathscr D}(A), X)_{1/2, 2}$ is equivalent to $W^{1, 2}_0({\cal O})$. 
\end{example}

\begin{example}\rm
 Assume that $V$, $H$ are two Hilbert spaces such that 
\[
V\hookrightarrow H\cong H^*\hookrightarrow V^*.\]
Let $a(u, v)$ be a bounded sesquilinear form defined on $V\times V$ satisfying G\aa rding's inequality 
\begin{equation}
\label{29/12/11(10)}
2a(u, u)\le -\delta\|u\|^2_V,\hskip 20pt u\in V,
\end{equation}
where $\delta>0$ is a  constant. Let $A$ be the operator associated with this sesquilinear form by
\begin{equation}
\label{29/12/11(11)}
 \langle v, Au\rangle_{V, V^*}=a(u, v),\,\,\,\,\,u,\,\,v\in V.
\end{equation}
Then operator $A$ is bounded and linear from $V$ into $V^*$. The realization of $A$ in $H$, which is the restriction of $A$ to the domain ${\mathscr D}(A)=\{v\in V:\, Av\in H\}$, is also denoted by $A$. It is known (cf. \cite{ht1979})  that $A$ generates a bounded analytic semigroup $e^{tA}$, $t\ge 0$, on $V^*$ and $e^{tA}:\, V^*\to V$ for each $t>0$. 

Let $X=V^*$, $Z=V$ and  $W=(V, V^*)_{1/2, 2}=H$. In this case, we have ${\cal X}=H\times L^2([-r, 0]; V)$.
Let $A_i\in {\mathscr L}(V, V^*)$, $i=1,\,2$, such that $A_i$ maps ${\mathscr D}(A)$ endowed with the grath norm of $A$ into $H$ continuously.  
Consider a Stieltjes measure $\eta$ given by 
\begin{equation}
\label{11/08/2013(1068)}
\eta(\theta) = -{\bf 1}_{(-\infty, -r]}(\theta)\alpha A_1 - \int^0_\theta \beta(\tau)A_2d\tau:\, V\to V^*,\hskip 15pt \theta\in [-r, 0],  
\end{equation}
where  $\alpha\in {\mathbb R}$ and the real-valued function $\beta(\cdot)$ is assumed to be $L^2$-integrable on $[-r, 0]$, i.e., $\beta\in L^2([-r, 0]; {\mathbb R})$. 

For example,  consider the following  initial-boundary value problem for a stochastic parabolic   differential equation with delay. Let ${\cal O}\subset {\mathbb R}^n$ be a bounded domain with smooth boundary $\partial {\cal O}$. We set $H=L^2({\cal O}; {\mathbb R})$ and $V=H^1_0({\cal O}; {\mathbb R})$. Let $a(u, v)$ be the sesquilinear form in $H^1_0({\cal O}; {\mathbb R})\times H^1_0({\cal O}; {\mathbb R})$ defined by
\begin{equation}
\label{08/08/2013(10)}
a(u, v) = \int_{\cal O}\Big\{\sum^n_{i,\,j=1}a_{ij}(x)\frac{\partial u}{\partial x_i}\frac{{\partial v}}{\partial x_j} + \sum^n_{i=1}b_i(x)\frac{\partial u}{\partial x_i} v + c(x)uv\Big\}dx,\,\,\,\,x\in {\cal O}.
\end{equation}
Here  we assume that the real-valued coefficients $a_{ij}$, $b_i$, $c$ satisfy
\[
a_{ij}=a_{ji}\in C^1(\bar{\cal O}; {\mathbb R}),\hskip 20pt b_i\in C^1(\bar{\cal O}; {\mathbb R}),\hskip 20pt c\in L^\infty({\cal O}; {\mathbb R}),\hskip 20pt 1\le i,\,j\le n,\]
and the uniform ellipticity
\begin{equation}
\label{02/09/13(1)}
\sum^n_{i,\,j=1}a_{ij}(x)y_iy_j\ge \delta\|y\|^2_{{\mathbb R}^n},\hskip 20pt \forall\, y=(y_1,\cdots, y_n)\in {\mathbb R}^n,\hskip 15pt x\in {\cal O},
\end{equation} 
for some constant $\delta>0$. As is well known (see e.g., Tanabe \cite{ht1979}), this sesquilinear form is bounded and the operator $A: H^1_0({\cal O}; {\mathbb R})\to H^{-1}({\cal O}; {\mathbb R})$ defined through (\ref{08/08/2013(10)}) has the following realization in $L^2({\cal O}; {\mathbb R})$. Let 
\[
\tilde A = -\sum^n_{i,\,j=1}\frac{\partial}{\partial x_j}\Big(a_{ij}(x)\frac{\partial}{\partial x_j}\Big) + \sum^n_{i=1}b_i(x)\frac{\partial}{\partial x_i} + c(x),\hskip 20pt x\in {\cal O},\]
be the associated uniformly elliptic differential operator of the second order. Next, let $A_i$, $i=1,\,2$, be the restriction  to $H^1_0({\cal O}; {\mathbb R})$ of the second order differential operator $-\tilde{A}_i$, $i=1,\,2$, given by
\[
\tilde{A}_i =-\sum^n_{i,\,j=1}\frac{\partial}{\partial x_j}\Big(\tilde a_{ij}(x)\frac{\partial}{\partial x_j}\Big) + \sum^n_{i=1}\tilde b_i(x)\frac{\partial}{\partial x_i} + \tilde c(x),\hskip 20pt  x\in {\cal O},\]
where 
\[
\tilde a_{ij} = \tilde a_{ji}\in C^1(\bar{\cal O}; {\mathbb R}),\hskip 20pt \tilde b_i\in C^1(\bar{\cal O}; {\mathbb R}),\hskip 20pt \tilde c\in L^\infty({\cal O}; {\mathbb R}),\hskip 20pt  1\le i,\,j\le n.\]
Thus each $A_i: H^1_0({\cal O}; {\mathbb R})\to H^{-1}({\cal O}; {\mathbb R})$ is bounded without the ellipticity condition (\ref{02/09/13(1)}). The following system of a stochastic parabolic  partial functional  differential equation and initial-boundary condition is covered
\begin{equation}
\label{10/02/2012(1)}
\begin{cases}
\displaystyle\frac{\partial y(t, x)}{\partial t} = \tilde A y(t, x) + \tilde A_1y(t-r, x) +\int^0_{-r}\beta(\theta)\tilde A_2y(t+\theta, x)d\theta + f(t, x)\dot B(t),\,t\ge 0,\,x\in {\cal O},\\
y(0, \cdot) =\phi_0(\cdot)\in L^2({\cal O}; {\mathbb R}),\,\,\,\,y(t, \cdot)=\phi_1(t, \cdot)\in H^1_0({\cal O}; {\mathbb R}),\,\,\,\,\hbox{a.e.}\,\,\,\,t\in [-r, 0),
\end{cases} 
\end{equation}
where  the kernel $\beta(\cdot)$ is assumed to be an element of $L^2([-r, 0]; {\mathbb R})$. 
\end{example}

In \cite{kl08(2), kl11}, we studied stationary solutions  for the following abstract stochastic retarded evolution equation on a Hilbert space $H,$
\begin{equation}
\label{08/08/2013(1)}
\begin{cases}
\displaystyle dy(t) =Ay(t)dt + A_1y(t-r)dt + \displaystyle\int^0_{-r} \beta(\theta)A_2 y(t+\theta)d\theta dt + f(t)dB(t),\,\,\,\,t\ge 0,\\
y(0)=\phi_0,\,\,\,\,y(\theta)=\phi_1(\theta),\,\,\,\theta\in [-r, 0],\,\,\,r>0,
\end{cases}
\end{equation} 
where both the operators $A_1$ and $A_2$ appearing on the delay terms are linear and {\it bounded\/} on $H.$ In this work, we continue the study of stationary solutions for the equation (\ref{08/08/2013(1)}) by taking {\it unbounded\/} $A_1$ and $A_2$ into consideration.
 
The objective of this  work is to study stability and stationary  (strong) solutions  for a class of retarded Ornstein-Uhlenbeck processes. Here we are especially concerned about the situation that delay  appears in those terms including partial derivatives of the analogous system equations to (\ref{10/02/2012(1)}). In
Section 2, we first present a theory of fundamental solutions involved with unbounded delay operators. This is a natural  generalization of those in the  theory of bounded operators  developed in  \cite{kl08(2), kl11}. Afterwards, this powerful tool is used  in Section 3 to derive a variation  of constants formula for the stochastic systems under investigation. By using the explicit form of the retarded Ornstein-Uhlenbeck processes, we shall develop in Section 4 a theory of stability and stationary solutions. To locate a stationary solution for our system, it is important to know when the associated ``lift-up" solution semigroup  is exponentially stable, a case which is quite complicated in contrast to its bounded delay counterpart. To clarify and illustrate  our  theory, we split our statement of Section 5 into two parts, Subsections 5.1 and 5.2, to consider the discrete and distributed delays separately. In contrast with bounded delay situation in \cite{kl08(2), kl11}, it turns out that we need  different methods to deal with these two kinds of delays.
  Finally, we add an Appendix to present the proofs of some results from deterministic functional differential equations.

\section{Fundamental Solutions}

Assume that $A\in {\mathscr L}(Z, X)$ and $A$  generates an analytic semigroup $e^{tA}$, $t\ge 0$, on $X$. When $Z=X$, we only suppose that $A$ generates a $C_0$-semigroup $e^{tA}$, $t\ge 0$,  on $X$.
Let $f\in L^2([0, T]; X)$ and consider the following retarded functional differential  equation,
\begin{equation}
\label{14/08/2013(50)}
\begin{cases}
\displaystyle\frac{dy(t)}{dt} = Ay(t) + Fy_t + f(t),\,\,\,\,t\in (0, T],\\
 y(0)=\phi_0,\,\, y(t)=\phi_1(t),\,\,t\in [-r, 0],\,\,\,\Phi=(\phi_0, \phi_1)\in {\cal X}=W\times L^2([-r, 0]; Z),
\end{cases}
\end{equation}
or its integral form,
\begin{equation}
\label{23/07/2013(30)}
\begin{cases}
y(t) = e^{tA}\phi_0 + \displaystyle\int^t_0 e^{(t-s)A}Fy_sds + \int^t_0 e^{(t-s)A}f(s)ds,\hskip 20pt t\ge 0,\\
y(0)=\phi_0,\,\, y(\theta)=\phi_1(\theta),\,\,\theta\in [-r, 0],\,\,\,\Phi=(\phi_0, \phi_1)\in {\cal X},
\end{cases}
\end{equation}
where the delay term $F:\,C([-r, 0]; Z)\to X$ is a  bounded linear operator satisfying (\ref{14/10/13(1)}).

\begin{theorem}
\label{29/05/2013(1)} 
For any $T\ge 0$,  $f\in L^2([0, T]; X)$ and $\Phi=(\phi_0, \phi_1)\in {\cal X}$, there exists a unique solution $y(t) =y(t, \Phi)$ of (\ref{23/07/2013(30)}) such that 
\[
y\in L^2([0, T]; Z)\cap W^{1, 2}([0, T]; X)\subset C([0, T]; W).\]
 Moreover, there is a number $C_T>0$, depending only on $T$, such that 
\begin{equation}
\label{29/05/2013(3)}
\Big(\int^T_0 \|y(t)\|_Z^2dt + \int^T_0\Big\|\frac{dy(t)}{dt}\Big\|^2_{X}dt\Big) \le C_T\Big(\|\phi_0\|^2_W + \int^0_{-r}\|\phi_1(\theta)\|^2_Zd\theta + \int^T_0 \|f(t)\|^2_{X}dt\Big).
\end{equation}
\end{theorem}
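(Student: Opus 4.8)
The plan is to solve the integral equation (\ref{23/07/2013(30)}) by a contraction argument built on the maximal regularity of the analytic semigroup, to globalise it by the method of steps, and finally to read off the continuity in $W$ from a trace theorem. First I would isolate the non-delayed building block. Since $A$ generates an analytic semigroup on $X$ and $A\in{\mathscr L}(Z,X)$, the inhomogeneous problem $y'=Ay+g$, $y(0)=\phi_0$ has $L^2$-maximal regularity: for $g\in L^2([0,T];X)$ the convolution $u(t)=\int_0^t e^{(t-s)A}g(s)\,ds$ lies in $L^2([0,T];{\mathscr D}(A))\cap W^{1,2}([0,T];X)\subset L^2([0,T];Z)\cap W^{1,2}([0,T];X)$ with norm controlled by $\|g\|_{L^2([0,T];X)}$, while for $\phi_0\in W=(Z,X)_{1/2,2}$ the orbit $t\mapsto e^{tA}\phi_0$ lies in $L^2([0,T];Z)\cap W^{1,2}([0,T];X)$ with norm controlled by $\|\phi_0\|_W$; this last bound is exactly the trace-space characterisation of $W$ (in the two running examples it is the classical parabolic smoothing $H\to L^2([0,T];V)$ and the very definition of $W$ as $\{x:\int_0^\infty\|Ae^{tA}x\|_X^2\,dt<\infty\}$). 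Writing $S(\phi_0,g)$ for the resulting affine solution operator, I record
\[
\|S(\phi_0,g)\|_{L^2([0,t];Z)}^2 + \big\|\tfrac{d}{dt}S(\phi_0,g)\big\|_{L^2([0,t];X)}^2 \le C_t\big(\|\phi_0\|_W^2 + \|g\|_{L^2([0,t];X)}^2\big),
\]
with $C_t$ bounded on bounded time intervals. Note that $\tfrac{d}{dt}S=AS+g\in L^2(X)$ uses precisely $A\in{\mathscr L}(Z,X)$, so that $Ay\in L^2(X)$ holds already for $y\in L^2(Z)$, without needing $L^2({\mathscr D}(A))$-regularity of the full solution.

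Next I would incorporate the delay as a fixed point. Extending any $y\in L^2([0,T];Z)$ by $\phi_1$ on $[-r,0]$, Proposition \ref{14/10/13(10)} guarantees $t\mapsto Fy_t\in L^2([0,T];X)$ together with (\ref{14/10/13(3)}); hence $\Gamma y:=S(\phi_0,\,Fy_\cdot+f)$ is a well-defined self-map of $L^2([0,T];Z)$ whose fixed points are exactly the solutions of (\ref{23/07/2013(30)}) in the asserted class. For two inputs the difference $w=y-\tilde y$ vanishes on $[-r,0]$ and $\Gamma y-\Gamma\tilde y=S(0,Fw_\cdot)$, so by the building-block estimate the contraction property reduces to bounding $\|Fw_\cdot\|_{L^2([0,\tau];X)}$ by a small multiple of $\|w\|_{L^2([0,\tau];Z)}$.

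This is where the main obstacle sits. Because the delay operators act at the same order as $A$ (they map $Z$ into $X$, not $X$ into $X$), the constant in (\ref{14/10/13(3)}) does not shrink with the length of the interval, so neither a global contraction nor a naive energy/Gronwall inequality closes: the delayed term cannot be absorbed by the coercivity or the maximal regularity of the instantaneous term, which is exactly the complication absent in the bounded-delay theory of \cite{kl08(2), kl11}. The way out is causality together with the continuity of $\eta$ at $0$. For $w$ vanishing on $[-r,0]$ and $s\le\tau$, only the restriction of the measure $\eta$ to $(-\tau,0]$ contributes to $Fw_s$ — the point delays at $-r_i>0$ see only the vanishing history once $\tau<\min_i r_i$ — and a short computation yields $\int_0^\tau\|Fw_s\|_X^2\,ds\le\omega(\tau)^2\int_0^\tau\|w(u)\|_Z^2\,du$ with $\omega(\tau)\to0$ as $\tau\downarrow0$ (for the distributed part one gets $\omega(\tau)^2\le\tau\int_{-\tau}^0\|A_0(\theta)\|^2\,d\theta$). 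Combined with the interval-uniform maximal regularity constant, this makes $\Gamma$ a strict contraction on $L^2([0,\tau];Z)$ for $\tau$ small, producing a unique local solution; uniqueness in $L^2([0,\tau];Z)$ then forces uniqueness in the finer class.

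Finally I would globalise and assemble the estimate. As the problem is linear, the contraction radius $\tau$ depends only on $A$ and $\eta$, not on the data, so I continue by the method of steps, taking $(y(k\tau),y_{k\tau})\in{\cal X}$ as the new initial datum — legitimate because each local solution lies in $C([0,\tau];W)$, whence $y(k\tau)\in W$ — and cover $[0,T]$ in finitely many steps. Summing the building-block estimate over these windows and using (\ref{14/10/13(3)}) to control each $\|Fy_\cdot\|_{L^2}$ by the $Z$-norm of $y$ on the preceding window yields (\ref{29/05/2013(3)}), the dependence on $T$ entering only through the number of steps. The closing inclusion $L^2([0,T];Z)\cap W^{1,2}([0,T];X)\subset C([0,T];W)$ is the Lions–Peetre trace theorem for the couple $(Z,X)$ at exponent $1/2$, since $W=(Z,X)_{1/2,2}$, which delivers both the stated regularity and the continuity in $W$.
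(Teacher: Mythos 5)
Your proposal cannot be compared to an in-paper argument, because the paper gives none: Theorem \ref{29/05/2013(1)} is quoted verbatim from \cite{Gdbkkes84(1)} and \cite{jjsnht1993}, with the proof deferred to those references. What you have written is essentially a faithful reconstruction of the argument in that literature: de Simon's $L^2$-maximal regularity for the analytic semigroup together with the trace-method characterisation of $W=(Z,X)_{1/2,2}$ handles the non-delayed affine part $S(\phi_0,g)$; the delay is absorbed by a contraction on a short window; and the method of steps globalises. Two of your observations are exactly the right mechanisms and are worth highlighting: first, that $dy/dt=Ay+Fy_t+f\in L^2([0,T];X)$ needs only $A\in{\mathscr L}(Z,X)$ and $y\in L^2([0,T];Z)$, not $L^2({\mathscr D}(A))$-regularity of $y$; second, that causality gives, for $w$ vanishing on $[-r,0]$, the refined constant $\omega(\tau)=|\eta|((-\tau,0])$ in place of the interval-independent constant of (\ref{14/10/13(3)}), so that $\omega(\tau)\to|\eta|(\{0\})$ as $\tau\downarrow 0$ and the contraction closes precisely when $\eta$ has no atom at $0$. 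The bookkeeping across windows that yields (\ref{29/05/2013(3)}), with $T$ entering only through the number of steps, is also sound for a linear problem.

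Four caveats on scope, none fatal to your argument but worth recording. (a) Your proof requires $\eta$ continuous at $0$, whereas the theorem is stated for an arbitrary bounded-variation $\eta$, and (\ref{19/11/07(12)}) even allows $r_i=0$. In that generality the statement is in fact false: with $Z={\mathscr D}(A)$ and $\eta$ carrying the atom $-2A$ at $0$, equation (\ref{23/07/2013(30)}) becomes the ill-posed backward problem $y'=-Ay+f$. So your extra hypothesis is not a defect of your proof but a structural condition implicitly in force in \cite{Gdbkkes84(1)} and \cite{jjsnht1993}, where the discrete delays are strictly positive and the distributed kernel is in $L^2$. (b) De Simon maximal regularity requires $X$ to be a Hilbert space (or UMD with the appropriate $R$-boundedness); this matches the setting of the cited references and of Sections 3--5, but Section 1 nominally allows Banach $X$, so the restriction should be stated. (c) The claim that $\phi_0\in(Z,X)_{1/2,2}$ gives $e^{\cdot A}\phi_0\in L^2([0,T];Z)\cap W^{1,2}([0,T];X)$ for a general intermediate space ${\mathscr D}(A)\hookrightarrow Z\hookrightarrow X$ needs justification beyond the two canonical cases $Z={\mathscr D}(A)$ and $Z=V$, $X=V^*$; since those are the only cases the paper actually uses, your appeal to them suffices here, but the general assertion should not be left implicit. (d) The degenerate case $Z=X$, where $A$ is only assumed to generate a $C_0$-semigroup, is not covered by your maximal-regularity route (and there the $W^{1,2}([0,T];X)$ assertion itself requires reinterpretation in the mild sense); in that case $F$ is bounded on $X$ and existence and uniqueness follow from an elementary Picard iteration, as in \cite{kl08(2), kl11}.
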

\begin{proof} See \cite{Gdbkkes84(1)} or  \cite{jjsnht1993}.
\end{proof}

By Theorem \ref{29/05/2013(1)}, one can construct a family of  {\it fundamental solution\/}  $G(t): (-\infty, \infty)\to {\mathscr L}(W)$ for  (\ref{23/07/2013(30)}) with $f=0$  through
\begin{equation}
\label{25/05/06(1)}
G(t)x = \begin{cases}
y(t, \Phi),\hskip 15pt &t\ge 0,\\
0,\hskip 15pt &t< 0,
\end{cases}
\hskip 20pt \forall\, x\in W,
\end{equation} 
 where $\Phi=(x, 0).$ 
Let $A_i\in {\mathscr L}(Z, X)$, $i=1,\,2$, such that each $A_i$ maps ${\mathscr D}(A)$ into $X$ continuously.  In the sequel, we shall mainly focus on the following  form  of delays given by 
\begin{equation}
\label{11/08/2013(10)}
\eta(\theta) = -{\bf 1}_{(-\infty, -r]}(\theta)\alpha A_1 - \int^0_\theta \beta(\tau)A_2d\tau:\, Z\to X,\hskip 15pt \theta\in [-r, 0],  
\end{equation}
where  $\alpha\in {\mathbb R}$ and  $\beta\in L^2([-r, 0]; {\mathbb R})$. The main reason of this consideration is that it would allow us to have  a stronger regularity of $G(t)$ analogous to that for the analytic semigroup $e^{tA}$, $t\ge 0$. To this end, we impose further conditions on the kernel function $\beta$ in (\ref{11/08/2013(10)}):  suppose that   $\beta: [-r, 0]\to {\mathbb R}$ is an H\"older continuous function on $[-r, 0]$, i.e., there is a number $\rho\in (0, 1]$ such that 
\[
|\beta(t)-\beta(s)|\le C|t-s|^\rho \hskip 15pt \hbox{for any}\hskip 15pt t,\,\,s\in [-r, 0],\]
where $C>0$. Under this condition, we can solve (see \cite{jjsnht1993}) the equation (\ref{23/07/2013(30)}) in the following form on $X$,
\begin{equation}
\label{17/05/06(3)}
\begin{cases}
y(t) = e^{tA}\phi_0 + \displaystyle\int^t_0 e^{(t-s)A}\alpha A_1y(t-r)ds + \displaystyle\int^t_0\int^0_{-r}\beta(\theta)e^{(t-s)A}A_2y(t+\theta)d\theta ds\\
\hskip 150pt + \displaystyle\int^t_0 f(s)ds,\,\,\,\,\,\,t\in [0, T],\\
y(0) =\phi_0,\hskip 5pt y_0=\phi_1,\,\,\,\,\,\Phi=(\phi_0, \phi_1)\in  {\cal X},
\end{cases}
\end{equation}
or solve the corresponding operatoral equation
 \begin{equation}
\label{16/08/2013(1)}
\begin{cases}
G(t) = e^{tA} + \displaystyle\int^t_0 e^{(t-s)A}\alpha A_1G(t-r)ds + \displaystyle\int^t_0\int^0_{-r}\beta(\theta)e^{(t-s)A}A_2G(t+\theta)d\theta ds,\,\,\,t\in [0, T],\\
G(t) ={\rm O},\hskip 10pt t<0.
\end{cases}
\end{equation}
to get the fundamental solution $G(t)$ in the space ${\mathscr L}(X)$. 

\begin{proposition}
\label{14/08/2013(91)}
 {\rm (see \cite{jjsnht1993})}
The fundamental solution $G(t)$, $t\in {\mathbb R}$, of retarded type in (\ref{16/08/2013(1)}) is strongly continuous both in $X$ and $W$ such that $G(t):\, X\to  Z$ for each $t>0$ and satisfies
\[
\frac{d}{dt}G(t)x= AG(t)x + \alpha A_1G(t-r)x + \displaystyle\int^0_{-r} \beta(\theta) A_2 G(t+\theta)xd\theta,\hskip 20pt x\in X,\hskip 20pt t\ge 0,
\]
and
\[
\frac{d}{dt}G(t)x= G(t)Ax + G(t-r)\alpha A_1x + \displaystyle\int^0_{-r} \beta(\theta) G(t+\theta)A_2xd\theta,\hskip 20pt x\in Z,\hskip 20pt t\ge 0.
\]
\end{proposition}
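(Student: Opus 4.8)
The plan is to construct $G$ by the method of steps on the successive intervals $[kr,(k+1)r]$, drawing existence and the basic regularity $G(\cdot)x\in C([0,T];W)$ (for every $x\in W$) directly from Theorem \ref{29/05/2013(1)} applied to $\Phi=(x,0)$, and then to upgrade this to the two differentiated identities by exploiting the smoothing of the analytic semigroup $e^{tA}$ together with the H\"older continuity of $\beta$. Strong continuity in $W$, and hence in $X$ since $W\hookrightarrow X$, is immediate from $y(\cdot,(x,0))\in C([0,T];W)$, with the conventions $G(0)=I$ and $G(t)={\rm O}$ for $t<0$.

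First I would prove the smoothing property $G(t):X\to Z$ for $t>0$. On $[0,r]$ the discrete term drops out because $G(s-r)={\rm O}$, so \eqref{16/08/2013(1)} reduces to the Volterra equation $G(t)=e^{tA}+\int_0^t e^{(t-s)A}h(s)\,ds$ with $h(s)=\int_{-r}^0\beta(\theta)A_2G(s+\theta)\,d\theta$; here $e^{tA}x\in{\mathscr D}(A)\subset Z$ for $t>0$ by analyticity, while $A_2G(s+\theta)\in X$ and $e^{(t-s)A}$ maps $X$ into $Z$ for $t>s$. A bootstrap across the steps $[kr,(k+1)r]$, feeding at each stage the regularity of $G$ already gained on the earlier interval into the delayed arguments, then yields $G(t):X\to Z$ for all $t>0$.

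Next I would establish the forward identity. Writing $g(s)x=\alpha A_1G(s-r)x+\int_{-r}^0\beta(\theta)A_2G(s+\theta)x\,d\theta$, the function $G(\cdot)x-e^{\cdot A}x=\int_0^t e^{(t-s)A}g(s)x\,ds$ is the mild solution of an inhomogeneous parabolic Cauchy problem. The standard strict-solution result for analytic generators guarantees that if $s\mapsto g(s)x$ is locally H\"older continuous with values in $X$, then this convolution is continuously differentiable, lies in ${\mathscr D}(A)$ for $t>0$, and has derivative $A\int_0^t e^{(t-s)A}g(s)x\,ds+g(t)x$; adding the derivative of $e^{tA}x$ reproduces exactly the first displayed equation. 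The requisite H\"older continuity of $s\mapsto g(s)x$ follows from the H\"older hypothesis on $\beta$ and the regularity of $G$ obtained above, the only delicate points being the breakpoints $s=kr$ where the discrete delay switches on; there the one-sided H\"older bounds inherited from each adjacent step suffice.

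The main obstacle is the second (transposed) identity, valid only for $x\in Z$, since it carries $A$, $A_1$, $A_2$ on the \emph{right} of $G$ and is invisible from the construction of $G$ as a left solution operator. My preferred route is to pass to Laplace transforms: from \eqref{16/08/2013(1)} one computes $\widehat{G}(\lambda)=(\lambda I-A-\Delta(\lambda))^{-1}$ with $\Delta(\lambda)=\alpha e^{-\lambda r}A_1+\big(\int_{-r}^0\beta(\theta)e^{\lambda\theta}\,d\theta\big)A_2$, an expression that inverts symmetrically on either side. The left factorization $(\lambda I-A-\Delta(\lambda))\widehat{G}(\lambda)=I$ re-inverts to the forward equation, whereas the right factorization $\widehat{G}(\lambda)(\lambda I-A-\Delta(\lambda))=I$ re-inverts, upon recognizing $\widehat{G}(\lambda)A$, $e^{-\lambda r}\widehat{G}(\lambda)A_1$ and $\big(\int_{-r}^0\beta(\theta)e^{\lambda\theta}\,d\theta\big)\widehat{G}(\lambda)A_2$ as the transforms of $G(t)A$, $G(t-r)\alpha A_1$ and $\int_{-r}^0\beta(\theta)G(t+\theta)A_2\,d\theta$ applied to $x\in Z$, to the second equation. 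The genuine work lies in justifying the inversion: one must know a priori that $t\mapsto G(t)Ax$ and the two delayed right-hand terms are continuous and that $G(\cdot)x$ is differentiable, which is precisely where the restriction $x\in Z$ and the regularity established above are indispensable. One could instead avoid transforms by verifying directly that $G(\cdot)x$ solves the integrated transposed equation $G(t)x=x+\int_0^t\big[G(s)Ax+G(s-r)\alpha A_1x+\int_{-r}^0\beta(\theta)G(s+\theta)A_2x\,d\theta\big]\,ds$ and invoking uniqueness, but the Laplace route renders the underlying left--right symmetry of the resolvent transparent and is, I expect, the cleanest.
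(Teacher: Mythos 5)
A preliminary remark: the paper itself contains no proof of this proposition --- it is quoted from Jeong--Nakagiri--Tanabe \cite{jjsnht1993}, so your proposal can only be measured against the literature argument, whose broad architecture (construct $G$, establish weighted smoothing estimates, then derive the two differentiated identities) your sketch does follow. Within that architecture, however, two steps as written have genuine gaps. First, the inference ``strong continuity in $W$, and hence in $X$ since $W\hookrightarrow X$'' is backwards: continuity of $t\mapsto G(t)x$ in $W$-norm for $x\in W$ gives $X$-continuity only for those same $x$, whereas strong continuity in $X$ is a statement about \emph{every} $x\in X$. You need first to know that $G(t)$ extends to ${\mathscr L}(X)$ with a locally uniform bound $\sup_{t\le T}\|G(t)\|_{{\mathscr L}(X)}<\infty$, and then conclude by density of $W$ in $X$; this extension is exactly what your Volterra construction must deliver, so it should precede, not follow, the continuity claim.

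Second, and more seriously, the Volterra iteration on $[0,r]$ does not converge with the estimates you invoke. With $G_0(t)=e^{tA}$, the inner integral $\int_{-s}^0\beta(\theta)A_2G_0(s+\theta)x\,d\theta$ for $x\in X$ faces the bound $\|A_2e^{\tau A}x\|_X\le C\tau^{-1}\|x\|_X$, which produces a \emph{nonintegrable} singularity at $\theta=-s$: neither $\beta\in L^2$ nor Cauchy--Schwarz rescues $\int_{-s}^0|\beta(\theta)|(s+\theta)^{-1}d\theta$. This is precisely where the H\"older continuity of $\beta$ must be spent, via the splitting $\beta(\theta)=\bigl(\beta(\theta)-\beta(-s)\bigr)+\beta(-s)$: the H\"older difference reduces the singularity to $(s+\theta)^{\rho-1}$, which is integrable, while the remaining term $\beta(-s)\int_0^s A_2e^{uA}x\,du$ is finite because $A_2A^{-1}\in{\mathscr L}(X)$ and $\int_0^s Ae^{uA}x\,du=e^{sA}x-x$. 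Your sketch reserves the H\"older hypothesis on $\beta$ for the forward identity (H\"older continuity of $s\mapsto g(s)x$), but it is already indispensable for the construction itself and for the smoothing $G(t):X\to Z$; without the splitting, the first step of your bootstrap fails. With that repaired, the rest is viable: the strict-solution argument for the forward identity is standard for analytic generators, and your Laplace-transform derivation of the transposed identity for $x\in Z$ (or the direct verification of the integrated transposed equation plus uniqueness, which is closer to the structural-operator route actually taken in \cite{jjsnht1993}) both work, provided you justify the inversion for $\mathrm{Re}\,\lambda$ large as you indicate. One last small point in your favor: you silently read $G(s-r)$ and $G(s+\theta)$ inside the time integrals of (\ref{16/08/2013(1)}), which is indeed the intended reading of that display.
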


\begin{corollary}
\label{14/08/2013(9199)}
Let $G^*(t)$ denote the adjoint operator of the fundamental solution $G(t)$, $t\in {\mathbb R}^1$. Then $G^*(t): Z^*\to X^*$ is strongly continuous  in $Z^*$ and satisfies
\[
\frac{d}{dt}G^*(t)x= G^*(t)A^*x + \alpha G^*(t-r) A^*_1x + \displaystyle\int^0_{-r} \beta(\theta) G^*(t+\theta)A^*_2xd\theta,\hskip 20pt x\in X^*,\hskip 20pt t\ge 0.
\]
and
\[
\frac{d}{dt}G^*(t)x= A^*G^*(t)x + \alpha A^*_1G^*(t-r)x + \displaystyle\int^0_{-r} \beta(\theta) A^*_2 G^*(t+\theta)xd\theta,\hskip 20pt x\in Z^*,\hskip 20pt t\ge 0.
\]
\end{corollary}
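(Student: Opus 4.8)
The plan is to obtain each of the two identities for $G^*(t)$ by taking a duality pairing of the corresponding identity in Proposition \ref{14/08/2013(91)} and transferring every operator to its adjoint. Since $G(t):X\to Z$ for $t>0$ by Proposition \ref{14/08/2013(91)}, the adjoint $G^*(t):Z^*\to X^*$ is a well-defined bounded operator; and because $Z\hookrightarrow X$ is dense and continuous, the dual map $X^*\hookrightarrow Z^*$ is continuous and injective (a functional on $X$ vanishing on the dense subspace $Z$ is zero), so every term written in the statement makes sense in the indicated space. Note also that $A,A_1,A_2\in{\mathscr L}(Z,X)$ give $A^*,A_1^*,A_2^*\in{\mathscr L}(X^*,Z^*)$.

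For the first identity, fix $v\in X^*$ and $x\in X$, and pair the first equation of Proposition \ref{14/08/2013(91)} with $v$. Since $G(t)x\in Z$ for $t>0$ and $A\in{\mathscr L}(Z,X)$, the defining relations of the adjoints give
\begin{equation*}
\langle AG(t)x,v\rangle_{X,X^*}=\langle G(t)x,A^*v\rangle_{Z,Z^*}=\langle x,G^*(t)A^*v\rangle_{X,X^*},
\end{equation*}
and likewise $\langle A_1G(t-r)x,v\rangle=\langle x,G^*(t-r)A_1^*v\rangle$. A fixed bounded functional commutes with the $t$-derivative and with the Bochner integral $\int^0_{-r}\beta(\theta)(\cdots)\,d\theta$, whence
\begin{equation*}
\Big\langle x,\frac{d}{dt}G^*(t)v\Big\rangle=\Big\langle x,\ G^*(t)A^*v+\alpha G^*(t-r)A_1^*v+\int^0_{-r}\beta(\theta)G^*(t+\theta)A_2^*v\,d\theta\Big\rangle.
\end{equation*}
As $x\in X$ is arbitrary, the first identity follows in $X^*$.

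The second identity is entirely symmetric: one pairs the second equation of Proposition \ref{14/08/2013(91)} with an arbitrary $v\in Z^*$, tests against $u\in Z$, and uses $\langle G(t)Au,v\rangle_{Z,Z^*}=\langle u,A^*G^*(t)v\rangle_{Z,Z^*}$ together with the analogous relations for the $A_1$- and $A_2$-terms; since $u\in Z$ ranges over a set separating $Z^*$, the identity holds in $Z^*$. Here the time derivative is understood in $Z^*$ (resp. $X^*$ for the first identity) according to the embedding $X^*\hookrightarrow Z^*$, and the interchange of the pairing with $d/dt$ and with the integral is justified by the continuity of $v$ (resp. $u$) and the integrability already recorded in Section 2.

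The genuinely delicate point — which I expect to be the main obstacle — is the asserted strong continuity of $t\mapsto G^*(t)x$ in $Z^*$, since the adjoint of a merely strongly continuous operator family need not be strongly continuous on a non-reflexive space. This is circumvented by the analytic-type smoothing $G(t):X\to Z$ from Proposition \ref{14/08/2013(91)}: for $t>0$ this regularity yields operator-norm continuity of $t\mapsto G(t)$ in ${\mathscr L}(X,Z)$, and $\|G^*(t)-G^*(s)\|_{{\mathscr L}(Z^*,X^*)}=\|G(t)-G(s)\|_{{\mathscr L}(X,Z)}$ shows that $t\mapsto G^*(t)$ is then norm continuous, a fortiori strongly continuous, for $t>0$. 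The endpoint behaviour at $t=0$, together with the consistency of the two duality brackets under $X^*\hookrightarrow Z^*$, is handled through reflexivity of the underlying (Hilbert) spaces in the cases of interest, which guarantees that strong continuity of $G(t)$ transfers to $G^*(t)$.
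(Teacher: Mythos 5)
Your dualization of the two differential identities is sound and is essentially what the paper intends: the corollary is stated with no proof at all, as an immediate dualization of Proposition \ref{14/08/2013(91)}, and your pairing computations (using $A,A_1,A_2\in{\mathscr L}(Z,X)$, hence $A^*,A_1^*,A_2^*\in{\mathscr L}(X^*,Z^*)$, together with the smoothing $G(t):X\to Z$ for $t>0$) are the natural ones. One small caveat: what your argument literally yields is the scalar identity $\frac{d}{dt}\langle x,G^*(t)v\rangle=\langle x,\,G^*(t)A^*v+\alpha G^*(t-r)A_1^*v+\int^0_{-r}\beta(\theta)G^*(t+\theta)A_2^*v\,d\theta\rangle$ for each fixed $x$; to upgrade this to strong differentiability of $G^*(\cdot)v$ in $X^*$ you must integrate in $t$ and use strong continuity of the right-hand side, so the differential identities are not actually independent of the continuity assertion you defer to the end.

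That continuity assertion is where the genuine gap lies. The step ``the analytic-type smoothing $G(t):X\to Z$ yields operator-norm continuity of $t\mapsto G(t)$ in ${\mathscr L}(X,Z)$ for $t>0$'' is not contained in Proposition \ref{14/08/2013(91)} (which asserts only strong continuity in $X$ and $W$ plus the pointwise smoothing) and does not follow from it: strong continuity together with $G(t)X\subset Z$ does not imply norm continuity, and the paper itself emphasizes (Subsection 5.2) that for the discrete delay $A_1=\alpha A$ the associated solution semigroup is never norm continuous, so a blanket norm-continuity claim is precisely the type of statement that fails in this unbounded-delay setting. The fallback via ``reflexivity of the underlying Hilbert spaces'' is likewise insufficient as stated: reflexivity upgrades weak$^*$ to weak continuity of $t\mapsto G^*(t)x$, but for an operator family that is not a semigroup, weak continuity does not give strong continuity, and Section 2 is formulated for Banach spaces in any case. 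The clean repair, which is the content behind the citation of \cite{jjsnht1993}, is to recognize $G^*(t)$ as the fundamental solution of the transposed delay equation with coefficients $A^*$, $\alpha A_1^*$, $\beta(\cdot)A_2^*$ posed on the dual scale $X^*\hookrightarrow Z^*$ (the roles of the smooth and ambient spaces interchanged, consistent with $G^*(t):Z^*\to X^*$ mirroring $G(t):X\to Z$). In the separable Hilbert setting in which the corollary is actually applied, these adjoint data satisfy the same standing hypotheses as in Section 2, so Theorem \ref{29/05/2013(1)} and Proposition \ref{14/08/2013(91)} applied to the adjoint system deliver strong continuity and differentiability of its fundamental solution directly, and your duality pairing is then needed only to identify that fundamental solution with $G^*(t)$.
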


\begin{proposition}
For any $T\ge 0$, there exists a number $C_T>0$ such that for any  $f\in L^2([0, T]; X)$, 

\begin{equation}
\label{29/05/2013(2999)}
 \int^T_0 \Big\|\int^t_0 G(t-s)f(s)ds\Big\|^2_Zdt \le C_T\int^T_0 \|f(t)\|^2_{X}dt.
\end{equation}
\end{proposition}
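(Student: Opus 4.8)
The plan is to recognize the convolution $t\mapsto \int_0^t G(t-s)f(s)\,ds$ as the solution of (\ref{23/07/2013(30)}) issued from the zero initial datum $\Phi=(0,0)$ and forced by $f$, and then to read off the desired bound directly from the a priori estimate (\ref{29/05/2013(3)}) of Theorem \ref{29/05/2013(1)}. Concretely, let $y(\cdot)=y(\cdot,(0,0))$ be the unique solution furnished by Theorem \ref{29/05/2013(1)} with $\phi_0=0$ and $\phi_1=0$; applying (\ref{29/05/2013(3)}) with these data gives at once
\[
\int_0^T\|y(t)\|_Z^2\,dt\le C_T\int_0^T\|f(t)\|_X^2\,dt.
\]
Thus the entire content of the proposition reduces to the variation-of-constants identity $y(t)=\int_0^t G(t-s)f(s)\,ds$, and the constant $C_T$ is inherited unchanged from (\ref{29/05/2013(3)}).

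To establish this identity I would set $u(t)=\int_0^t G(t-s)f(s)\,ds$ for $t\ge0$ and $u\equiv 0$ on $[-r,0)$, and verify that $u$ satisfies (\ref{23/07/2013(30)}) with $\Phi=(0,0)$, whence $u=y$ by uniqueness. Recall that $G$ itself solves (\ref{23/07/2013(30)}) with $f=0$ and $\Phi=(x,0)$ (see (\ref{25/05/06(1)})), i.e. $G(\tau)=e^{\tau A}+\int_0^\tau e^{(\tau-\sigma)A}FG_\sigma\,d\sigma$ with $G_\sigma(\theta)=G(\sigma+\theta)$. Substituting the definition of $u$ into $\int_0^t e^{(t-s)A}Fu_s\,ds$, using the representation (\ref{14/10/13(1)}) of $F$ and Fubini's theorem to interchange the $\theta$-, $\sigma$- and $s$-integrations and finally substituting $\tau=s-\sigma$, the inner integral collapses by the $G$-equation to $G(t-\sigma)-e^{(t-\sigma)A}$, giving
\[
\int_0^t e^{(t-s)A}Fu_s\,ds=\int_0^t\big[G(t-\sigma)-e^{(t-\sigma)A}\big]f(\sigma)\,d\sigma=u(t)-\int_0^t e^{(t-s)A}f(s)\,ds,
\]
which is precisely (\ref{23/07/2013(30)}) with zero initial data. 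Hence $u=y$ and the estimate follows.

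The main difficulty lies not in the formal bookkeeping above but in its rigorous justification. The kernel $G(\tau)$ maps $X$ into $Z$ only for $\tau>0$, with a norm that is singular as $\tau\downarrow0$ of analytic-semigroup type, so for a generic $f\in L^2([0,T];X)$ it is not clear a priori that $\int_0^t G(t-s)f(s)\,ds$ converges in $Z$ for a.e.\ $t$, nor that $u$ lies in $L^2([-r,T];Z)$ so that $Fu_s$ is meaningful through Proposition \ref{14/10/13(10)}; this is exactly the maximal-regularity phenomenon the proposition encodes, and it is the reason a naive Young- or Cauchy--Schwarz-type bound on the convolution fails. I would therefore carry out the Fubini computation and the identification $u=y$ first on a dense class of forcings, for instance Hölder-continuous functions or step functions with values in $Z={\mathscr D}(A)$, where the smoothing estimates of Proposition \ref{14/08/2013(91)} render every integral absolutely convergent in $Z$ and the interchange of integrals legitimate. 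On this class the displayed estimate holds, so the linear map $f\mapsto\int_0^\cdot G(\cdot-s)f(s)\,ds$ is bounded into $L^2([0,T];Z)$ there; it then extends continuously to all of $L^2([0,T];X)$, and a standard approximation argument identifies the extension with the pointwise convolution, yielding the proposition in full generality.
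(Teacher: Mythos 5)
Your proposal is correct and takes essentially the same route as the paper: the paper's entire proof is the one-line application of the a priori estimate (\ref{29/05/2013(3)}) with $\Phi=(0,0)$, implicitly resting on the identification $y(t)=\int_0^t G(t-s)f(s)\,ds$. The verification of that identification, which you rightly single out as the real content and sketch via the operator equation for $G$ plus Fubini and a density argument, is carried out in the paper itself in Step 1 of the proof of Proposition \ref{16/08/2013(10)} by essentially the computation you describe.
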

\begin{proof}
Let $\Phi=(0, 0)$ in (\ref{29/05/2013(3)}), then we get the desired result (\ref{29/05/2013(2999)}) immediately.
\end{proof}

Let $\Phi\in {\cal X}$ and $y(t, \Phi)$ be the solution of the equation (\ref{17/05/06(3)}) with $f=0$. The segment process $y_t$ is given by $y_t(\Phi) = y(t+\theta; \Phi)$, $\theta\in [-r, 0]$. In association with $y$, we define the mapping ${\cal T}(t)$, $t\ge 0$,  of  (\ref{17/05/06(3)}) by 
\begin{equation}
\label{18/08/2013(70)}
{\cal T}(t)\Phi= (y(t; \Phi), y_t(\Phi)),\hskip 20pt t\ge 0,\,\,\,\,\Phi\in {\cal X}.
\end{equation}
Then it may be shown that ${\cal T}(t)$, $t\ge 0$, is a strongly continuous or $C_0$ semigroup on ${\cal X}$. Let ${\cal A}$ be the infinitesimal generator of ${\cal T}(t)$ or $e^{t{\cal A}}$, $t\ge 0$. The characterization of ${\cal A}$ is given by the following theorem.
\begin{theorem} {\rm (see \cite{Gdbkkes84(1)})} 
The operator ${\cal A}$ is described by 
\begin{equation}
\label{11/08/2013(20)}
\begin{split}
{\mathscr D}({\cal A}) = \Big\{\Phi=(\phi_0, \phi_1):\, &(\phi_0, \phi_1) \in {\mathscr D}(A)\times W^{1, 2}([-r, 0]; Z), \phi_0=\phi_1(0),\\
& A\phi_0 +\alpha A_1\phi_1(-r)  +  \displaystyle\int^0_{-r}\beta(\theta)A_2\phi_1(\theta)d\theta\in W\Big\},
\end{split}
\end{equation}
and for any $\Phi=(\phi_0, \phi_1)\in {\mathscr D}({\cal A})$, 
\begin{equation}
\label{20/08/2013(20)}
{\cal A}\Phi= \Big( A\phi_0 + \alpha A_1\phi_1(-r)  +  \displaystyle\int^0_{-r}\beta(\theta)A_2\phi_1(\theta)d\theta, \frac{d\phi_1(\theta)}{d\theta}\Big).
\end{equation}
\end{theorem}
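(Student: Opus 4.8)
The plan is to identify $\mathcal A$ as the strong generator of $\mathcal T(t)$, i.e. to compute
$\mathcal A\Phi=\lim_{t\downarrow 0}t^{-1}\bigl(\mathcal T(t)\Phi-\Phi\bigr)$ in $\mathcal X=W\times L^2([-r,0];Z)$, handling the two components of $\mathcal T(t)\Phi=(y(t;\Phi),y_t(\Phi))$ separately. The point is that the domain conditions in (\ref{11/08/2013(20)}) arise as exactly the requirements for each component limit to exist in its own norm. I would organize the argument as two inclusions plus the formula: the inclusion $\mathscr D(\mathcal A)\subseteq\{\cdots\}$ extracts the necessary conditions from the existence of the limit, while the reverse inclusion verifies that they suffice, and along the way the limit is pinned down to be (\ref{20/08/2013(20)}).

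For the history component, observe that $y_t(\theta)=y(t+\theta)$ agrees, for $\theta<-t$, with $\phi_1(t+\theta)$, so that $t^{-1}(y_t-\phi_1)$ is, up to a contribution supported on a $\theta$-set of measure $t$, the difference quotient of the left translation of $\phi_1$. Hence this component converges in $L^2([-r,0];Z)$ if and only if $\phi_1\in W^{1,2}([-r,0];Z)$, the limit then being $d\phi_1/d\theta$, which is the second entry of (\ref{20/08/2013(20)}). Since $W^{1,2}([-r,0];Z)\hookrightarrow C([-r,0];Z)$, the value $\phi_1(0)$ is well defined; combining the continuity $y\in C([0,T];W)$ from Theorem \ref{29/05/2013(1)} with $y(\theta)=\phi_1(\theta)$ on $[-r,0]$ forces the glued trajectory to be continuous across $0$, which yields the compatibility $\phi_0=\phi_1(0)$.

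For the first component I would use the integral equation (\ref{23/07/2013(30)}) to write $y(t)-\phi_0=(e^{tA}\phi_0-\phi_0)+\int_0^t e^{(t-s)A}Fy_s\,ds$. Using Proposition \ref{14/10/13(10)} together with $\phi_1\in W^{1,2}$ (discrete delay) and the $L^2$-continuity of translation (distributed delay), the map $s\mapsto Fy_s$ is continuous at $0$ into $X$ with value $Fy_0=F\phi_1=\alpha A_1\phi_1(-r)+\int_{-r}^0\beta(\theta)A_2\phi_1(\theta)\,d\theta$, so that $t^{-1}\int_0^t e^{(t-s)A}Fy_s\,ds\to F\phi_1$ in $X$. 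Consequently $t^{-1}(e^{tA}\phi_0-\phi_0)$ must converge in $X$, which is precisely the statement $\phi_0\in\mathscr D(A)$ with limit $A\phi_0$; the $X$-limit of the whole expression is then $A\phi_0+F\phi_1$, the first entry of (\ref{20/08/2013(20)}). Since membership in $\mathscr D(\mathcal A)$ requires convergence in the stronger norm of $W$, the limit $A\phi_0+F\phi_1$ must in fact lie in $W$, which is the remaining condition in (\ref{11/08/2013(20)}).

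The hard part will be the converse implication, namely upgrading this first-component convergence from $X$ to $W$ once $\phi_0\in\mathscr D(A)$, $\phi_1\in W^{1,2}$ with $\phi_1(0)=\phi_0$, and $g:=A\phi_0+F\phi_1\in W$. Rewriting $y(t)-\phi_0=\int_0^t e^{sA}g\,ds+\int_0^t e^{(t-s)A}(Fy_s-F\phi_1)\,ds$, the first term gives $t^{-1}\int_0^t e^{sA}g\,ds\to g$ in $W$, because $g\in W$ and $e^{tA}$ leaves $W$ invariant and is strongly continuous there. The delicate point is that the remainder must tend to $0$ in $W$: the analytic smoothing $\|e^{uA}\|_{\mathscr L(X,W)}\lesssim u^{-1/2}$ only just fails to control the singular integral against the H\"older-$1/2$ modulus of $s\mapsto Fy_s$, so the naive pointwise bound is merely $O(1)$ rather than $o(1)$. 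I would close this gap using the explicit $L^2$-integral description of the real-interpolation norm of $W=(Z,X)_{1/2,2}$ recalled earlier, together with the H\"older continuity imposed on $\beta$, which converts the borderline estimate into a convergent one. Alternatively, the whole $W$-convergence difficulty can be bypassed by computing the resolvent $(\lambda-\mathcal A)^{-1}=\int_0^\infty e^{-\lambda t}\mathcal T(t)\,dt$ for large $\lambda$: one solves the first-order problem $\lambda\phi_1-\phi_1'=\psi_1,\ \phi_1(0)=\phi_0$ for the history part and the characteristic equation $\bigl(\lambda-A-\alpha A_1e^{-\lambda r}-\int_{-r}^0\beta(\theta)A_2e^{\lambda\theta}\,d\theta\bigr)\phi_0=\psi_0+(\text{known terms})$ for $\phi_0$, and then reads off that the range of the resolvent is exactly the asserted domain with action (\ref{20/08/2013(20)}).
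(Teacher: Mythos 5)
The paper itself offers no proof of this theorem --- it is quoted from \cite{Gdbkkes84(1)} --- so your attempt must stand on its own, and it contains one genuine gap, precisely at the point you yourself flag as ``the hard part''. The necessity direction and the identification of the action (\ref{20/08/2013(20)}) are essentially correct. But in the sufficiency direction you must show $t^{-1}(y(t)-\phi_0)\to A\phi_0+F\phi_1$ in the $W$-norm, and your proposed repair of the borderline estimate is not an argument. Concretely: in the remainder $t^{-1}\int_0^t e^{(t-s)A}(Fy_s-F\phi_1)\,ds$, the discrete-delay part of $Fy_s-F\phi_1$ is indeed $o(s^{1/2})$ in $X$, since $\|\phi_1(s-r)-\phi_1(-r)\|_Z\le s^{1/2}\bigl(\int_{-r}^{s-r}\|\phi_1'(\tau)\|_Z^2d\tau\bigr)^{1/2}$, and against $\|e^{uA}\|_{{\mathscr L}(X,W)}\lesssim u^{-1/2}$ that part closes; but the distributed part involves the solution on $[0,s]$, where the only available control is $\int_0^s\|y(u)\|_Z^2\,du=o(1)$ --- $y$ need not be bounded in $Z$ near $0$ --- so $\|Fy_s-F\phi_1\|_X$ is merely $o(1)$, and the pointwise bound gives $t^{-1/2}o(1)$, not $o(1)$. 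Routing instead through the maximal-regularity estimate of Theorem \ref{29/05/2013(1)} with $\Phi=(0,0)$, which controls $\sup_{[0,t]}\bigl\|\int_0^{\cdot}e^{(\cdot-s)A}h(s)\,ds\bigr\|_W$ by $\|h\|_{L^2([0,t];X)}$, one would need $\|Fy_{\cdot}-F\phi_1\|_{L^2([0,t];X)}=o(t)$, whereas the obtainable bound is $O\bigl(t^{1/2}\bigr)\bigl(\int_0^t\|y(u)\|_Z^2du\bigr)^{1/2}+O(t)=o(t^{1/2})$. So the claim that the $L^2$-integral description of the $(Z,X)_{1/2,2}$-norm together with the H\"older continuity of $\beta$ ``converts the borderline estimate into a convergent one'' is exactly the unproved crux; no rearrangement of these exponents rescues it. A smaller but real flaw: your derivation of $\phi_0=\phi_1(0)$ from ``continuity of the glued trajectory'' is circular, since continuity across $0$ is precisely what compatibility asserts; the correct mechanism is quantitative --- if $\phi_0\ne\phi_1(0)$, then on $[-t,0]$ the second-component difference quotient has $L^2$-norm of order $t^{-1/2}\|\phi_0-\phi_1(0)\|_X$ (using $y\in C([0,T];W)\hookrightarrow C([0,T];X)$ and $\phi_1\in C([-r,0];Z)$), contradicting its convergence.

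Both defects are repairable with tools already in the paper, and your own fallback is one of the repairs. (a) The elementary fix for the $W$-limit is to differentiate rather than estimate: under exactly the stated conditions one has $(g,\phi_1')\in{\cal X}$ with $g:=A\phi_0+\alpha A_1\phi_1(-r)+\int_{-r}^0\beta(\theta)A_2\phi_1(\theta)d\theta\in W$, so Theorem \ref{29/05/2013(1)} yields a solution $w\in L^2([0,T];Z)\cap W^{1,2}([0,T];X)\subset C([0,T];W)$ of the same delay equation with data $(g,\phi_1')$; using $\phi_0=\phi_1(0)$ and Fubini one verifies that $\phi_0+\int_0^t w(s)\,ds$ satisfies (\ref{23/07/2013(30)}), hence equals $y(t)$ by uniqueness, and then $t^{-1}(y(t)-\phi_0)=t^{-1}\int_0^t w(s)\,ds\to w(0)=g$ in $W$ because $w$ is $W$-continuous --- note that each of the four domain conditions is consumed at a specific point, the membership $g\in W$ being what places the differentiated data in ${\cal X}$. (b) Your resolvent alternative is sound and is effectively the paper's own infrastructure: Proposition \ref{14/08/2013(70)} is exactly the pair of equations you write down, Lemma \ref{20/11/13(60)} records the identity $R(\lambda,{\cal A})=E_\lambda\Delta(\lambda)^{-1}H_\lambda{\cal S}+J_\lambda$, and the otherwise odd-looking condition $A\phi_0+F\phi_1\in W$ becomes transparent there as the requirement that $\psi_0=\lambda\phi_0-(A\phi_0+F\phi_1)$ lie in the first component $W$ of ${\cal X}$. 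To make (b) self-contained you would still need $\Delta(\lambda)^{-1}\in{\mathscr L}(X,{\mathscr D}(A))$ for ${\rm Re}\,\lambda$ large --- available via the $A$-bound-zero argument of Lemma \ref{19/11/13(1)} --- and a justification of the Laplace-transform identity $R(\lambda,{\cal A})=\int_0^\infty e^{-\lambda t}{\cal T}(t)\,dt$ before reading off ${\mathscr D}({\cal A})$ as the range of the resolvent. As written, however, the primary route does not constitute a proof.
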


For arbitrary $\lambda\in {\mathbb C}$, we define the characteristic operator $\Delta$ or $\Delta(\lambda)$ of (\ref{17/05/06(3)}) by
\begin{equation}
\label{20/08/2013(70)}
\begin{split}
\Delta(\lambda) = \lambda -A -\alpha A_1e^{-\lambda r} -\int^0_{-r} \beta(\theta)  e^{\lambda \theta}A_2d\theta.
\end{split}
\end{equation}
Clearly, $\Delta(\lambda)\in {\mathscr L}({\mathscr D}(A), X)$ for each $\lambda\in {\mathbb C}$. We also define the {resolvent} and {spectrum} sets for $\Delta(\lambda)$, respectively, by
\[
\rho(\Delta) = \{\lambda:\,\,\Delta(\lambda)\,\,\hbox{is bijective}\}\hskip 15pt\hbox{and}\hskip 15pt \sigma(\Delta) ={\mathbb C}\,\backslash\, \rho(\Delta).\]
Then it is easy to see, by the well-known open mapping theorem, that for each $\lambda\in \rho(\Delta)$, the inverse $\Delta(\lambda)^{-1}$ exists and belongs to ${\mathscr L}(X)$.

\begin{definition} \rm We introduce the following notations of continuous, residual and point spectra of $\Delta$:

$\sigma_C(\Delta) =\{\lambda\in {\mathbb C}:\, \Delta(\lambda)\,\,\hbox {is injective,}\,\,\overline{\Delta(\lambda){\mathscr D}(A)} = X, \,\Delta(\lambda)^{-1}\,\,\hbox{is unbounded on}\,\,X\},$

$\sigma_R(\Delta) =\{\lambda\in {\mathbb C}:\, \Delta(\lambda)\,\,\hbox {is injective,}\,\,\overline{\Delta(\lambda){\mathscr D}(A)} \not= X\},$

$\sigma_P(\Delta) =\{\lambda\in {\mathbb C}:\, \Delta(\lambda)\,\,\hbox {is not injective}\}.$
\end{definition}
\noindent By definition, it is clear that ${\mathbb C} = \rho(\Delta)\cup \sigma_C(\Delta)\cup \sigma_R(\Delta)\cup\sigma_P(\Delta).$
 The following result provides a useful tool to establish the spectrum relations between $\Delta$ and  the generator ${\cal A}$.   

\begin{proposition}
\label{14/08/2013(70)}
Let $\lambda\in {\mathbb C}$ and $\Psi=(\psi_0, \psi_1)\in {\cal X}$. If $\Phi=(\phi_1(0), \phi_1)\in {\mathscr D}({\cal A)}$ satisfies 
\begin{equation}
\label{28/07/2013(2)}
\lambda\Phi -{\cal A}\Phi = \Psi,
\end{equation}
then we have 
\begin{equation}
\label{28/07/2013(5)}
\phi_1(\theta) = e^{\lambda\theta}\phi_1(0) + \int^0_{\theta}e^{\lambda(\theta-\tau)}\psi_1(\tau)d\tau,\hskip 20pt -r\le \theta\le 0,
\end{equation}
and, letting $\phi_0=\phi_1(0)$, there is
\begin{equation}
\label{28/07/2013(1)}
\Delta(\lambda) \phi_1(0) =  \int^0_{-r} e^{\lambda(-r-\tau)}\alpha A_1\psi_1(\tau)d\tau + \int^0_{-r} \beta(\theta) \int^0_{\theta} e^{\lambda(\theta-\tau)}A_2\psi_1(\tau)d\tau d\theta+\psi_0.
\end{equation}
Conversely, if $ \phi_0\in {\mathscr D}(A)$ satisfies the equation (\ref{28/07/2013(1)}), and letting $\phi_1(0)=\phi_0$, 
\begin{equation}
\label{28/07/2013(20)}
\phi_1(\theta) = e^{\lambda\theta} \phi_1(0) +  \int^0_{\theta}e^{\lambda(\theta-\tau)}\psi_1(\tau)d\tau,\hskip 20pt -r\le \theta\le 0,
\end{equation}
then we have that $\phi_1\in W^{1,2}([-r, 0]; Z)$, $\Phi=(\phi_1(0), \phi_1)\in {\mathscr D}({\cal A})$ and $\Phi$ satisfies (\ref{28/07/2013(2)}).
\end{proposition}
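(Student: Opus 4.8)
The plan is to rewrite the abstract resolvent equation \eqref{28/07/2013(2)} in its two components by means of the explicit form \eqref{20/08/2013(20)} of $\mathcal{A}$, to solve the second (differential) component as a linear ordinary differential equation, and then to feed the result into the first (algebraic) component so as to recover the characteristic equation. Setting $\phi_0=\phi_1(0)$, equation \eqref{28/07/2013(2)} is equivalent to the pair
\[
\lambda\phi_0-A\phi_0-\alpha A_1\phi_1(-r)-\int^0_{-r}\beta(\theta)A_2\phi_1(\theta)\,d\theta=\psi_0,\qquad \lambda\phi_1(\theta)-\phi_1'(\theta)=\psi_1(\theta).
\]

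For the forward direction I would first treat the second equation as a first-order linear ODE for the $Z$-valued map $\theta\mapsto\phi_1(\theta)$ with the value $\phi_1(0)$ prescribed; multiplying by the integrating factor $e^{-\lambda\theta}$ and integrating from $\theta$ to $0$ yields \eqref{28/07/2013(5)} at once. Evaluating that formula at $\theta=-r$ supplies $\phi_1(-r)$, and substituting both $\phi_1(-r)$ and $\phi_1(\theta)$ into the first equation, one collects the terms proportional to $\phi_1(0)$, which assemble into $\Delta(\lambda)\phi_1(0)$ by the definition \eqref{20/08/2013(70)}, while the remaining $\psi_1$-integrals are transferred to the right-hand side, producing \eqref{28/07/2013(1)}.

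For the converse I would start from $\phi_0\in\mathscr{D}(A)$ satisfying \eqref{28/07/2013(1)}, define $\phi_1$ by \eqref{28/07/2013(20)}, and verify the three requirements for membership in $\mathscr{D}(\mathcal{A})$ listed in \eqref{11/08/2013(20)}. The regularity $\phi_1\in W^{1,2}([-r,0];Z)$ follows by differentiating the variation-of-constants form: the derivative equals $\lambda\phi_1(\theta)-\psi_1(\theta)$, which lies in $L^2([-r,0];Z)$ because $\psi_1\in L^2([-r,0];Z)$ and $\phi_1$ is continuous, while the normalization $\phi_0=\phi_1(0)$ is immediate from \eqref{28/07/2013(20)}.

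The step I expect to be the crux is the last condition in \eqref{11/08/2013(20)}, namely that $A\phi_0+\alpha A_1\phi_1(-r)+\int^0_{-r}\beta(\theta)A_2\phi_1(\theta)\,d\theta$ lie in the interpolation space $W$ and not merely in $X$. Substituting the explicit $\phi_1$ and isolating the coefficient of $\phi_0$, this quantity becomes $\bigl(A+\alpha A_1e^{-\lambda r}+\int^0_{-r}\beta(\theta)e^{\lambda\theta}A_2\,d\theta\bigr)\phi_0$ plus exactly the $\psi_1$-integrals occurring in \eqref{28/07/2013(1)}; writing the bracketed operator as $\lambda-\Delta(\lambda)$ and invoking \eqref{28/07/2013(1)} collapses the whole expression to $\lambda\phi_0-\psi_0$. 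Since $\phi_0\in\mathscr{D}(A)\hookrightarrow W$ and $\psi_0\in W$, this lies in $W$, which establishes the domain membership; reading off the two components of $\lambda\Phi-\mathcal{A}\Phi$ with the same identity then returns $(\psi_0,\psi_1)$ and confirms \eqref{28/07/2013(2)}. The only genuinely structural point is that the characteristic equation \eqref{28/07/2013(1)} is precisely what forces this otherwise $X$-valued quantity into $W$; the rest is the ODE integration and routine bookkeeping of the $\psi_1$-terms.
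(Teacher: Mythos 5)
Your proof is correct and follows essentially the same route as the paper's: the paper likewise splits $\lambda\Phi-{\cal A}\Phi=\Psi$ into the algebraic component $\lambda\phi_1(0)-A\phi_1(0)-F\phi_1=\psi_0$ and the ODE $\lambda\phi_1(\theta)-d\phi_1(\theta)/d\theta=\psi_1(\theta)$, solves the latter by variation of constants to obtain \eqref{28/07/2013(5)}, and in the converse substitutes \eqref{28/07/2013(20)} back into the first component to recover \eqref{28/07/2013(2)}. The only difference is presentational: you spell out the domain condition $A\phi_0+\alpha A_1\phi_1(-r)+\int^0_{-r}\beta(\theta)A_2\phi_1(\theta)\,d\theta=\lambda\phi_0-\psi_0\in W$ (using $\phi_0\in{\mathscr D}(A)\hookrightarrow W$ and $\psi_0\in W$), which the paper leaves implicit in its concluding identity $\lambda\phi_1(0)-A\phi_1(0)-F\phi_1=\psi_0$.
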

\begin{proof} See Appendix.
\end{proof}

As usual we denote by $\rho({\cal A})$ the resolvent set of ${\cal A}$,  $\sigma({\cal A})$ the spectrum of ${\cal A}$ and by $\sigma_P({\cal A})$, $\sigma_C({\cal A})$, $\sigma_R({\cal A})$ the point, continuous and residual spectra of ${\cal A}$, respectively. By virtue of Proposition \ref{14/08/2013(70)}, we can establish the following results on the relationship between three kinds of spectrum for ${\cal A}$ and the corresponding  $\Delta$.
\begin{theorem} 
\label{20/08/2013(80)}
{\rm (see \cite{snht96})} For the operators $\Delta$ and ${\cal A}$ of (\ref{17/05/06(3)}), the  following inclusions and equalities hold:
\begin{equation}
  \sigma_P({\cal A})=\sigma_P(\Delta), 
\end{equation}
\begin{equation}
\sigma_R({\cal A}) = \sigma_R(\Delta),
\end{equation}
\begin{equation}
\sigma_C({\cal A}) \subset \sigma_C(\Delta)\subset \sigma_C({\cal A})\cup  \rho({\cal A}).
\end{equation}
\end{theorem}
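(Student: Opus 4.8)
The engine of the whole argument is Proposition \ref{14/08/2013(70)}, which I would first repackage as a statement about a single bounded \emph{data operator}. Define $K(\lambda):{\cal X}\to X$ by
\[
K(\lambda)\Psi=\psi_0+\int^0_{-r}e^{\lambda(-r-\tau)}\alpha A_1\psi_1(\tau)d\tau+\int^0_{-r}\beta(\theta)\int^0_\theta e^{\lambda(\theta-\tau)}A_2\psi_1(\tau)d\tau\,d\theta,\qquad \Psi=(\psi_0,\psi_1),
\]
so that (\ref{28/07/2013(1)}) becomes $\Delta(\lambda)\phi_1(0)=K(\lambda)\Psi$. Proposition \ref{14/08/2013(70)} then says precisely that the resolvent equation $(\lambda-{\cal A})\Phi=\Psi$ is solvable in ${\mathscr D}({\cal A})$ if and only if $K(\lambda)\Psi\in\Delta(\lambda){\mathscr D}(A)$, the correspondence $\Phi\leftrightarrow\phi_1(0)$ being linear and, through (\ref{28/07/2013(5)}), one-to-one. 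Hence
\[
\ker(\lambda-{\cal A})\cong\ker\Delta(\lambda),\qquad \mathrm{Range}(\lambda-{\cal A})=K(\lambda)^{-1}\big(\Delta(\lambda){\mathscr D}(A)\big).
\]
I would record two elementary properties of $K(\lambda)$ that drive everything: taking $\psi_1=0$ gives $K(\lambda)(\psi_0,0)=\psi_0$, so $W\subset\mathrm{Range}\,K(\lambda)$, and since $W$ is dense in $X$, $K(\lambda)$ has dense range; dually $K(\lambda)^*$ is injective. The \emph{point spectrum} statement now drops out: setting $\Psi=0$ forces $\phi_1(\theta)=e^{\lambda\theta}\phi_1(0)$, so $\Phi=0$ iff $\phi_1(0)=0$, whence $\ker(\lambda-{\cal A})\neq\{0\}$ exactly when $\ker\Delta(\lambda)\neq\{0\}$ and $\sigma_P({\cal A})=\sigma_P(\Delta)$.

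For the \emph{residual} and \emph{continuous} spectra the range identity does the work, but through the non-surjective map $K(\lambda)$, and this is the source of the asymmetry between ${\cal A}$ and $\Delta$. The easy half runs as follows: if $\overline{\Delta(\lambda){\mathscr D}(A)}\neq X$, choose $0\neq x^*\perp\Delta(\lambda){\mathscr D}(A)$; then $K(\lambda)^*x^*$ annihilates $K(\lambda)^{-1}(\Delta(\lambda){\mathscr D}(A))=\mathrm{Range}(\lambda-{\cal A})$ and is nonzero because $K(\lambda)^*$ is injective, so $\mathrm{Range}(\lambda-{\cal A})$ is not dense. Taking contrapositives, density of $\mathrm{Range}(\lambda-{\cal A})$ forces density of $\Delta(\lambda){\mathscr D}(A)$; and $\Delta(\lambda){\mathscr D}(A)=X$ trivially forces $\mathrm{Range}(\lambda-{\cal A})=K(\lambda)^{-1}(X)={\cal X}$. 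Combined with the injectivity match and the fact that $\Delta(\lambda)$ is closed (so that a dense but proper range is equivalent to an unbounded inverse), these two observations already yield $\sigma_R(\Delta)\subset\sigma_R({\cal A})$ and the inclusion $\sigma_C({\cal A})\subset\sigma_C(\Delta)$.

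The hard half, and the main obstacle, is the converse density statement: $\overline{\Delta(\lambda){\mathscr D}(A)}=X$ implies $\overline{\mathrm{Range}(\lambda-{\cal A})}={\cal X}$. Since $K(\lambda)$ maps onto a dense, not full, subspace of $X$, one cannot simply pull density back through $K(\lambda)^{-1}$. I would resolve this by passing to adjoints: $\mathrm{Range}(\lambda-{\cal A})$ is dense iff $\ker(\lambda-{\cal A})^*=\{0\}$ and $\Delta(\lambda){\mathscr D}(A)$ is dense iff $\ker\Delta(\lambda)^*=\{0\}$, so it suffices to prove the adjoint analogue of the point-spectrum correspondence, $\ker(\lambda-{\cal A})^*\cong\ker\Delta(\lambda)^*$. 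This is the transposed version of Proposition \ref{14/08/2013(70)}, read off from the adjoint fundamental solution $G^*(t)$ of Corollary \ref{14/08/2013(9199)}, and establishing it is the technical heart of the argument. Granting it, density of the two ranges is equivalent, which upgrades the easy half to $\sigma_R({\cal A})=\sigma_R(\Delta)$ and supplies the last inclusion: if $\lambda\in\sigma_C(\Delta)$ then $\lambda-{\cal A}$ is injective with dense range, so—${\cal A}$ being closed—$\lambda-{\cal A}$ is either bijective, i.e. $\lambda\in\rho({\cal A})$, or has proper dense range, i.e. $\lambda\in\sigma_C({\cal A})$; hence $\sigma_C(\Delta)\subset\sigma_C({\cal A})\cup\rho({\cal A})$. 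The strictness of this final inclusion is explained by the same mechanism: $\lambda-{\cal A}$ can be onto ${\cal X}$ while $\Delta(\lambda)$ fails to be onto $X$, precisely when $\mathrm{Range}\,K(\lambda)\subset\Delta(\lambda){\mathscr D}(A)\subsetneq X$.
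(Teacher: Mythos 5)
Your reduction through Proposition \ref{14/08/2013(70)} is the right engine, and it is the one the paper itself points to (the paper offers no proof of Theorem \ref{20/08/2013(80)}, deferring entirely to \cite{snht96}). The identities $\ker(\lambda-{\cal A})\cong\ker\Delta(\lambda)$ and $\mathrm{Range}(\lambda-{\cal A})=K(\lambda)^{-1}\big(\Delta(\lambda){\mathscr D}(A)\big)$ are correct consequences of (\ref{28/07/2013(5)})--(\ref{28/07/2013(20)}), the observation that $K(\lambda)$ has dense range because $K(\lambda)(\psi_0,0)=\psi_0$ and $W$ is dense in $X$ is sound, and your ``easy half'' deductions ($\sigma_P({\cal A})=\sigma_P(\Delta)$, $\sigma_R(\Delta)\subset\sigma_R({\cal A})$, $\sigma_C({\cal A})\subset\sigma_C(\Delta)$, and $\sigma_C(\Delta)\subset\sigma_C({\cal A})\cup\rho({\cal A})$ conditional on the hard half) all go through.

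The genuine gap is that the hard half --- density of $\Delta(\lambda){\mathscr D}(A)$ implies density of $\mathrm{Range}(\lambda-{\cal A})$, which you reduce to $\ker(\bar\lambda-{\cal A}^*)\cong\ker\Delta(\lambda)^*$ --- is never established: you name it the technical heart and then write ``Granting it.'' Corollary \ref{14/08/2013(9199)} does not deliver it; it only records the differential equations satisfied by $G^*(t)$, whereas what is needed is a characterization of ${\mathscr D}({\cal A}^*)$ and of the eigenvectors of ${\cal A}^*$ in ${\cal X}^*=W\times L^2([-r,0];Z^*)$, and these do \emph{not} mirror (\ref{28/07/2013(5)}): the second component of an element of $\ker(\bar\lambda-{\cal A}^*)$ is not of the exponential form $e^{\bar\lambda\theta}x^*$ but is manufactured from $x^*\in\ker\Delta(\lambda)^*$ through the adjoint of the structure operator $S$ of (\ref{01/02/2012(1)}), via the intertwining of ${\cal S}$ in (\ref{21/11/13(95)}) with the solution semigroup of the transposed equation (coefficients $A^*$, $A_1^*$, $A_2^*$) --- precisely the machinery of \cite{snht96} that your sketch presupposes. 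There is no shortcut back through $K(\lambda)$ either: since $\mathrm{Range}\,K(\lambda)$ is dense but not closed, a functional $\Psi^*$ annihilating $K(\lambda)^{-1}(\Delta(\lambda){\mathscr D}(A))$ need not factor as $K(\lambda)^*x^*$ with $x^*\perp\Delta(\lambda){\mathscr D}(A)$, so the duality argument genuinely requires the transposed characterization, which must be proved rather than granted. A smaller unjustified step: you invoke closedness of $\Delta(\lambda)$ in $X$; in the Section 2 setting $F_\lambda\in{\mathscr L}({\mathscr D}(A),X)$ is merely $A$-bounded with relative bound possibly $\ge 1$ (it is $0$ only under hypotheses like those of Lemma \ref{19/11/13(1)}), so closedness of $\lambda-A-F_\lambda$ needs an argument --- though, to be fair, the paper's own partition ${\mathbb C}=\rho(\Delta)\cup\sigma_C(\Delta)\cup\sigma_R(\Delta)\cup\sigma_P(\Delta)$ implicitly relies on the same dichotomy.
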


\section{Variation of Constants Formula}

In the sequel, we assume that $Z$, $X$ are real separable Hilbert spaces and $W=W^*$ according to the well-known Riesz representation theorem. 
Let $L^2_{{\mathscr F}_0}(\Omega; {\cal X})$ denote the space of all ${\cal X}$-valued mappings  $\Phi(\omega)= (\phi_0(\omega), \phi_1(\cdot, \omega))$ defined on some probability space $(\Omega, {\mathscr F}, \{{\mathscr F}_t\}_{t\ge 0}, {\mathbb P})$ such that both $\phi_0$ and $\phi_1(\theta)$ are ${\mathscr F}_0$-measurable for any $\theta\in [-r, 0]$ and satisfy
\[
{\mathbb E}\|\Phi\|^2_{\cal X} ={\mathbb E}\|\phi_0\|^2_W + {\mathbb E}\|\phi_1\|^2_{L^2_r}<\infty.\]
As mentioned before, we shall be concerned about  the following stochastic retarded evolution equation  on the Hilbert space $X,$
\begin{equation}
\label{13/08/2013(1)}
\begin{cases}
dy(t) = Ay(t)dt + \alpha A_1y(t-r)dt + \displaystyle\int^0_{-r} \beta(\theta)A_2y(t+\theta)d\theta dt +f(t)dB(t),\,\,\,\,\,\,t\in [0, T],\\
y(0) =\phi_0,\hskip 5pt y_0=\phi_1,\,\,\,\,\,\Phi=(\phi_0, \phi_1)\in L^2_{{\mathscr F}_0}(\Omega; {\cal X}),
\end{cases}
\end{equation}
where $f\in L^2(\Omega\times [0, T]; X)$ and $B$ is a real-valued ${\mathscr F}_t$-Brownian motion  on $(\Omega, {\mathscr F}, {\mathbb P})$. Here $A$, $A_1$, $A_2$ and $\alpha$, $\beta$ all are  given as in  (\ref{17/05/06(3)}).
We may establish the following proposition which is crucial for the variation of constants formula of the solutions for (\ref{13/08/2013(1)}).
\begin{proposition}
\label{16/08/2013(10)}
Let $G(\cdot)$ be the fundamental solution of (\ref{13/08/2013(1)}). Then the  process $v(t) := \int^t_0 G(t-s)f(s)dB(s)$ constitutes a solution of the equation (\ref{13/08/2013(1)}) with $\phi_0=0$, $\phi_1\equiv 0$ and moreover 
\begin{equation}
\label{13/08/2013(6}
v\in L^2([0, T]\times \Omega; Z)\cap L^2(\Omega; C([0, T]; W))\hskip 15pt \hbox{for any}\hskip 15pt T\ge 0.
\end{equation}
Hence $v(t)$, $t\in [0, T]$, gives the unique solution of (\ref{13/08/2013(1)}) with zero initial data.
\end{proposition}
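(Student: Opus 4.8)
The plan is to construct $v$ first as an $X$-valued It\^o integral and then to upgrade its spatial regularity by freezing the noise amplitude and invoking the deterministic estimates of Section 2. Because $\sup_{0\le t\le T}\|G(t)\|_{{\mathscr L}(X)}<\infty$ and $s\mapsto G(t-s)f(s)$ is ${\mathscr F}_s$-adapted with $\int_0^t{\mathbb E}\|G(t-s)f(s)\|^2_X\,ds<\infty$, the integral $v(t)=\int_0^tG(t-s)f(s)dB(s)$ is well defined in $L^2(\Omega;X)$ for every $t$, and by Proposition \ref{14/08/2013(91)} its integrand takes values in $Z$ for $s<t$. Applying the It\^o isometry and Tonelli's theorem gives, for $\ast\in\{Z,W\}$,
\[
{\mathbb E}\int_0^T\|v(t)\|_\ast^2\,dt=\int_0^T{\mathbb E}\Big[\int_0^{T-s}\|G(\tau)f(s)\|_\ast^2\,d\tau\Big]ds.
\]
For fixed $s$ and $\omega$ the frozen map $\tau\mapsto G(\tau)f(s)$ is the deterministic fundamental solution issuing from the datum $(f(s),0)$. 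With $\ast=W$ the inner integral is dominated by the analytic square--function (admissibility) bound $\int_0^{T}\|G(\tau)x\|_W^2\,d\tau\le C_T\|x\|_X^2$ attached to $e^{tA}$, so that $v\in L^2([0,T]\times\Omega;W)$ with norm controlled by ${\mathbb E}\int_0^T\|f(s)\|_X^2\,ds$.

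The second step is to identify $v$ as the solution with vanishing initial data, i.e. to verify the stochastic counterpart of the mild equation (\ref{17/05/06(3)}). I would substitute the operatoral identity (\ref{16/08/2013(1)}) for $G(t-s)$ into $v(t)=\int_0^tG(t-s)f(s)dB(s)$, producing a leading term $\int_0^te^{(t-s)A}f(s)dB(s)$ plus a $d\sigma$--convolution of $e^{(t-s-\sigma)A}$ against $\alpha A_1G(\sigma-r)+\int_{-r}^0\beta(\theta)A_2G(\sigma+\theta)d\theta$ applied to $f(s)$. After the change of variable $u=s+\sigma$ and an application of the stochastic Fubini theorem interchanging the $du$-integration with $dB(s)$, the inner stochastic integrals collapse, using $G(\cdot)=0$ on the negative half-line, to $v(\cdot-r)$ and $v(\cdot+\theta)$, whence
\[
v(t)=\int_0^te^{(t-s)A}f(s)dB(s)+\int_0^te^{(t-s)A}\Big[\alpha A_1v(s-r)+\int_{-r}^0\beta(\theta)A_2v(s+\theta)d\theta\Big]ds,
\]
which is precisely the mild form of (\ref{13/08/2013(1)}) with $\phi_0=0$, $\phi_1\equiv0$. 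The regularity from the first step is what licenses the stochastic Fubini interchange. Uniqueness is then immediate, since the difference of two solutions with the same data and noise solves the homogeneous version of (\ref{13/08/2013(1)}) pathwise and is forced to vanish by (\ref{29/05/2013(3)}).

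For the path statement $v\in L^2(\Omega;C([0,T];W))$ I would turn to the factorization method: for $\alpha\in(0,1/2)$ rewrite $G(t-s)$ through the Beta--integral so that $v(t)=c_\alpha\int_0^t(t-\sigma)^{\alpha-1}G(t-\sigma)Y(\sigma)d\sigma$ with $Y(\sigma)=\int_0^\sigma(\sigma-s)^{-\alpha}G(\sigma-s)f(s)dB(s)$, check that $Y\in L^p([0,T]\times\Omega;X)$, and show that the ensuing deterministic kernel operator carries $Y$ continuously into $W$. The genuine obstacle, I expect, is the $Z$--component of the regularity. The It\^o isometry reduces the $Z$--norm of $v$ to the frozen--datum quantity $\int_0^{T-s}\|G(\tau)f(s)\|_Z^2\,d\tau$, which by (\ref{29/05/2013(3)}) is controlled only by $\|f(s)\|_W$, \emph{not} by $\|f(s)\|_X$; the full gain to $Z$ is a feature of the deterministic convolution recorded in (\ref{29/05/2013(2999)}) and is not reproduced by the pointwise--in--$s$ integrand that the stochastic isometry produces. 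Extracting the $Z$--bound from the convolution estimate (\ref{29/05/2013(2999)}) rather than from the frozen square function, and correspondingly balancing $p$ against the fractional order $\alpha$ so that the borderline kernel $(t-\sigma)^{\alpha-1}\|G(t-\sigma)\|_{{\mathscr L}(X,W)}$ stays usable, is the delicate heart of the argument, reflecting that the noise amplitude is rough relative to the state space ${\cal X}$.
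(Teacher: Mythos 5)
You have correctly located the crux --- the $Z$-regularity of the stochastic convolution cannot be extracted from the It\^o isometry plus the frozen square-function bound, since by (\ref{29/05/2013(3)}) the quantity $\int_0^{T}\|G(\tau)x\|_Z^2\,d\tau$ is controlled by $\|x\|_W$, not by $\|x\|_X$ --- but having located it you leave it unresolved, and the two devices you propose in its place do not close. First, the factorization method is unavailable here: the Beta-integral trick recombines $v(t)=c_\alpha\int_0^t(t-u)^{\alpha-1}S(t-u)Y(u)\,du$ only because a semigroup satisfies $S(t-s)=S(t-u)S(u-s)$ for $s<u<t$; the fundamental solution $G$ of a retarded equation obeys no such composition law ($G(t)G(s)\ne G(t+s)$), so your kernel $Y(\sigma)=\int_0^\sigma(\sigma-s)^{-\alpha}G(\sigma-s)f(s)\,dB(s)$ does not reassemble into $v$. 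Second, your identification step is circular: substituting (\ref{16/08/2013(1)}) into $\int_0^tG(t-s)f(s)\,dB(s)$ with $f$ merely $X$-valued produces integrands such as $A_1G(\sigma-r)f(s)$, whose $X$-norm blows up like $(\sigma-r)^{-1}$ as $\sigma\downarrow r$ and is not square-integrable, so the stochastic Fubini interchange is not licensed by the $W$-regularity of the first step; and after the claimed collapse you must interpret $\alpha A_1v(s-r)$ and $A_2v(s+\theta)$, which requires precisely the a.e.\ $Z$-valuedness of $v$ that you concede you have not obtained. (Even your $W$-estimate quietly transfers the square-function bound from $e^{tA}$ to $G$, which itself needs an argument.)

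The paper's proof sidesteps all of this with one move your draft is missing: a reduction to the deterministic theory by subtracting the noise, carried out first for \emph{smooth} noise amplitude. For $f\in L^2(\Omega\times[0,T];Z)$ set $M(t)=\int_0^tf(s)\,dB(s)$, a $Z$-valued process; then $z=y-M$ solves, pathwise, the deterministic equation (\ref{17/05/06(3)}) with $X$-valued forcing $AM(s)+\alpha A_1M(s-r)+\int_{-r}^0\beta(\theta)A_2M(s+\theta)\,d\theta$, so Step 1 of the paper's proof (where the deterministic convolution $\int_0^tG(t-s)g(s)\,ds$ is verified via (\ref{16/08/2013(1)}) and the ordinary Fubini theorem to solve (\ref{17/05/06(3)})) together with Theorem \ref{29/05/2013(1)} delivers both the $Z$- and the $C([0,T];W)$-regularity of $z$ at one stroke --- this is exactly where the convolution gain (\ref{29/05/2013(2999)}) that you identified as indispensable actually enters. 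The representation $y(t)=\int_0^tG(t-u)f(u)\,dB(u)$ then follows from Proposition \ref{14/08/2013(91)} and a stochastic Fubini interchange applied only to integrands of the form $G(t-s)Af(u)$, $G(t-s)\alpha A_1f(u)$, which are harmless because $f$ is $Z$-valued at this stage. Finally, general $f\in L^2(\Omega\times[0,T];X)$ is handled by approximating with $Z$-valued $f_n$ and passing to the limit. If you wish to repair your argument, this two-stage scheme --- prove the proposition for $Z$-valued $f$ by the substitution $z=y-M$, then approximate --- is the missing idea; the direct substitution of (\ref{16/08/2013(1)}) under the stochastic integral for rough $f$ cannot be made rigorous as written.
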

\begin{proof}
We split our proofs into two steps as follows.

{\it Step 1.} Let $f\in L^2(\Omega\times [0, T]; X)$. We first show that $v(t) := \int^t_0 G(t-s)f(s)ds$ is a solution of the equation (\ref{17/05/06(3)}) with $\phi_0=0$ and $\phi_1\equiv 0$. To this end, first note that $G(t)$ is strongly continuous in $X$ and $v(t)$ makes sense as a Bochner integral in $X$ for each $t>0$. Since $G(t):\, X\to Z$ for each $t>0$, we have that
\begin{equation}
\label{13/08/2013(2)}
\begin{split}
v(t) &= \int^t_0 e^{(t-s)A}f(s)ds + \int^t_0\Big(\int^{t-s}_0 \alpha e^{(t-s-\tau)A}A_1G(\tau-r)d\tau\Big)f(s)ds\\
&\,\,\,\,\,\, + \int^t_0\Big(\int^{t-s}_0 e^{(t-s-\tau)A}\int^0_{-r} \beta(\theta)A_2G(\tau+\theta)d\theta d\tau\Big)f(s)ds\\
&=: I_1 +I_2+I_3\in Z,\hskip 20pt t\in [0, T].
\end{split}
\end{equation}
Using Fubini's theorem  and noting that $G(t)={\rm O}$ for $t<0$, we transform the integrals $I_2$, $I_3$ in (\ref{13/08/2013(2)}) into 
\begin{equation}
\label{13/08/2013(3)}
\begin{split}
I_2 &=  \alpha\int^t_0\int^{t}_s e^{(t-\tau)A}A_1G(\tau-s-r)d\tau f(s)ds\\
&= \alpha\int^t_0 e^{(t-\tau)A}A_1\Big(\int^0_{\tau} G(\tau-s-r)f(s)ds\Big)d\tau,
\end{split}
\end{equation}
and in a similar way, it is easy to see that 
\begin{equation}
\label{13/08/2013(4)}
\begin{split}
I_3 &=  \int^t_0\int^{t}_s e^{(t-\tau)A}\int^0_{-r}\beta(\tau) A_2 G(\tau-s+\theta)f(s)d\theta d\tau ds\\
&= \int^t_0 e^{(t-\tau)A}\Big(\int^{0}_{-r}\beta(\theta)A_2\int^\tau_0  G(s+\theta-\tau)f(\tau)d\tau d\theta \Big)ds.
\end{split}
\end{equation}
Hence combining (\ref{13/08/2013(2)})--(\ref{13/08/2013(4)}), we can verify immediately that $v(t)$ satisfies the equation (\ref{17/05/06(3)}) with $\phi_0=0$ and $\phi_1\equiv 0$. Furthermore, by Theorem \ref{29/05/2013(1)} it is easy to see that $v(t)$ is the unique solution of  (\ref{17/05/06(3)}) and moreover $v(t)$ satisfies (\ref{13/08/2013(6}).

{\it Step 2.} We first show that $v(t) := \int^t_0 G(t-s)f(s)dB(s)$ is a solution of the equation (\ref{13/08/2013(1)}) with $\phi_0=0$ and $\phi_1\equiv 0$ when  $f\in L^2(\Omega\times [0, T]; Z)$.  Define 
\begin{equation}
\label{15/08/2013(100)}
M(t) =
\begin{cases}
 \displaystyle\int^t_0 f(s)dB(s)\hskip 20pt \hbox{ for}&\hskip 15pt t\ge 0,\\
0\hskip 88pt \hbox{for}&\hskip 15pt t\in (-r, 0].
\end{cases}
\end{equation}
Then it is easy to see that $M \in L^2(\Omega\times [0, T]; Z)$.  Now let us  consider the following stochastic system with time delay,
\begin{equation}
\label{13/08/2013(20)}
\begin{cases}
y(t)=  \displaystyle\int^t_0  Ay(s)ds + \int^t_0 \alpha A_1 y(s-r)ds + \int^t_0 \int^0_{-r} \beta(\theta) A_2 y(s+\theta)d\theta ds + M(t),\,\,\, t\in [0, T],\\
y(0) = 0,\,\,\,\,y(\theta) =0, \,\,\,\,\,\theta\in [-r, 0].
\end{cases}
\end{equation} 
Let $z(t)=y(t)-M(t)$, $t\in [0, T]$. Then it is immediate to see that for any $t\in [0, T],$
\begin{equation}
\label{13/08/2013(21)}
\begin{split}
z(t) &= \int^t_0 A(z(s)+M(s))ds + \int^t_0 \alpha A_1( z(s-r) + M(s-r))ds\\
&\,\,\,\,\,\,\, + \int^t_0 \int^0_{-r} \beta(\theta) A_2 (z(s+\theta) + M(s+\theta))d\theta ds\\
&= \int^t_0 Az(s)ds + \int^t_0 \alpha A_1z(s-r)ds + \int^t_0 \int^0_{-r} \beta(\theta) A_2z(s+\theta)d\theta ds\\
&\,\,\,\,\,\, +  \int^t_0 \Big[AM(s) + \alpha A_1M(s-r) + \int^0_{-r} \beta(\theta) A_2M(s+\theta)d\theta\Big] ds.
\end{split}
\end{equation}
According to Step 1, we have $z\in  L^2(\Omega\times [0, T]; Z)\cap L^2(\Omega; C([0, T]; W))$, and further we may obtain the explicit form of the solution of (\ref{13/08/2013(21)}) as
\begin{equation}
\label{14/08/2013(90)}
\begin{split}
z(t) &= y(t)-M(t)\\
&= \int^t_0 G(t-s) \Big[AM(s) + \alpha A_1M(s-r) + \int^0_{-r} \beta(\theta) A_2M(s+\theta)d\theta\Big]ds
\end{split}
\end{equation}
for all $t\in [0, T].$
On the other hand, we may derive by using (\ref{15/08/2013(100)}), Fubini's theorem and Proposition \ref{14/08/2013(91)}  that for all $t\in [0, T]$,
\[
\begin{split}
 \int^t_0 G(t-s) &A\int^s_0 f(u)dB(u) ds+  \int^t_r G(t-s) \alpha A_1\int^{s-r}_0 f(u)dB(u)ds\\
& + \int^t_0 G(t-s)\int^0_{-r} \beta(\theta) A_2\int^{s+\theta}_0 f(u)dB(u)d\theta ds\\
&= \int^t_0 \Big[\int^t_u G(t-s)Af(u)ds + \int^{t-r}_u G(t-s)\alpha A_1 f(u)ds\\
&\,\,\,\,\,\,\,\, + \int^0_{-r} \beta(\theta) G(t-s)A_2\int^{s+\theta}_0 f(u)d\theta ds\Big]dB(u)\\
&= - \int^t_0\int^t_u \frac{dG(t-s)}{ds}f(u)dsdB(u)= \int^t_0 (G(t-u) -I)f(u)dB(u),
\end{split}
\]
which, in addition to  (\ref{14/08/2013(90)}), immediately implies that 
\[
y(t) = \int^t_0 G(t-u)f(u)dB(u),\hskip 20pt t\in [0, T],\]
and $y(t) = z(t)+M(t) \in L^2(\Omega\times [0, T]; Z)\cap L^2(\Omega; C([0, T]; W))$.

Last, the general result can be easily obtained by choosing a sequence $\{f_n\}\in L^2(\Omega\times [0, T]; Z)$ such that $f_n\to f$ in $L^2(\Omega\times [0, T]; X)$ and passing on a limit procedure. The proof is thus complete.
\end{proof}

For each $t>0$, we introduce the operator-valued function $U_t(\cdot)$ defined by 
\[
U_t(\theta) = \alpha G(t-\theta-r) A_1 + \int^\theta_{-r} \beta(\tau)G(t-\theta+\tau) A_2d\tau,\hskip 20pt \theta\in [-r, 0].\]
Let $T>0$ and we consider in association with $U_t(\cdot)$ a linear operator $U:\, L^2([-r, 0]; Z)\to L^2([0, T]; Z)$ defined by
\[
(U\varphi)(t) = \int^0_{-r} U_t(\theta)\varphi(\theta)d\theta,\hskip 15pt t\in [0, T],\hskip 15pt \varphi\in L^2([-r, 0]; Z).\]
We may see that $U$ is into and bounded. To show this, it is useful to introduce the  structure operator $S:\, L^2([-r, 0]; Z)\to L^2([-r, 0]; X)$ given by
\begin{equation}
\label{01/02/2012(1)}
\begin{split}
[S\varphi](\theta) = \alpha A_1\varphi(-r-\theta) +\int^\theta_{-r} \beta(\tau)A_2\varphi(\tau-\theta)d\tau,\,\,\,\,\,\theta\in [-r, 0],\,\,\,\,a.e. 
\end{split}
\end{equation}
for all $\varphi\in L^2([-r, 0]; Z)$. By using Cauchy-Schwartz's inequality, we have for any $\varphi\in L^2([-r, 0]; Z)$ that
\begin{equation}
\label{01/02/2012(2)}
\begin{split}
\int^0_{-r} \|S\varphi(\theta)\|^2_{X}d\theta &= \int^0_{-r} \|\alpha A_1\varphi(-r-\theta) + \int^\theta_{-r} \beta(\tau)A_2\varphi(\tau-\theta)d\tau\Big\|^2_{X}d\theta\\
&\le 2\int^0_{-r} \|\alpha A_1\varphi(-r-\theta)\|^2_{X}d\theta +2\int^0_{-r} \Big\|\int^\theta_{-r}\beta(\tau)A_2\varphi(\tau-\theta)d\tau\Big\|^2_{X}d\theta\\
&\le 2\Big(\alpha^2\|A_1\|^2_{{\mathscr L}(Z, X)} + r\|A_2\|^2_{{\mathscr L}(Z, X)}\|\beta\|^2_{L^2([-r, 0]; {\mathbb R})}\Big)\int^0_{-r} \|\varphi(\theta)\|^2_Zd\theta.
\end{split}
\end{equation}
Hence, we see that $S$ is into and bounded. Further, we define the structure operator ${\cal S}:\, {\cal X}\to W\times L^2([-r, 0]; X)$ by 
\begin{equation}
\label{21/11/13(95)}
{\cal S}\Phi = (\phi_0, S\phi_1),\hskip 20pt \forall \,\Phi=(\phi_0, \phi_1)\in {\cal X}.
\end{equation}
It is straightforward to see that ${\cal S}$ is linear and bounded.
In terms of $S$, we can further derive by  Fubini's theorem  that for $t\in [0, T],$
\[
(U\varphi)(t) = \int^0_{-r}G(t+\theta)[S\varphi](\theta)d\theta = \int^t_0 G(t-\theta)\bar\varphi(\theta)d\theta\]
where $\bar\varphi(\cdot) ={\bf 1}_{[0, r]}[S\varphi](-\cdot)\in L^2([0, T]; X)$. By  Step 1 in the proofs of Proposition \ref{16/08/2013(10)}, it is easy to see that $U$ is into and bounded.
\begin{theorem}
Let $T>0$, $\Phi=(\phi_0, \phi_1)\in L^2_{{\mathscr F}_0}(\Omega; {\cal X})$ and $f\in L^2(\Omega\times [0, T]; X)$, the solution $y$ of  (\ref{13/08/2013(1)})  is represented by 
\begin{equation}
\label{16/08/2013(50)}
\begin{split}
y(t, \Phi) &= G(t)\phi_0 + \int^0_{-r} U_t(\theta)\phi_1(\theta)d\theta + \int^t_0 G(t-s)f(s)dB(s)\\
&= G(t)\phi_0 + \int^0_{-r} G(t+\theta)(S\phi_1)(\theta)d\theta + \int^t_0 G(t-s)f(s)dB(s),\hskip 20pt t\in [0, T].
\end{split}
\end{equation}
\end{theorem}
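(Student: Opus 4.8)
The plan is to reduce the stochastic identity to a purely deterministic one by linear superposition. Since the coefficients in (\ref{13/08/2013(1)}) act linearly and (by Proposition \ref{16/08/2013(10)}) the solution with zero initial data is unique, I would write $y(t,\Phi)=y_d(t)+v(t)$, where $y_d$ solves the deterministic equation (\ref{17/05/06(3)}) with $f\equiv0$ and datum $\Phi=(\phi_0,\phi_1)$, while $v(t)=\int_0^t G(t-s)f(s)dB(s)$. Proposition \ref{16/08/2013(10)} identifies $v$ with the third summand of (\ref{16/08/2013(50)}), so the entire task becomes the deterministic representation $y_d(t)=G(t)\phi_0+\int_{-r}^0 U_t(\theta)\phi_1(\theta)d\theta$. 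The equality of the two lines of (\ref{16/08/2013(50)}) then needs no further work, being exactly the Fubini identity $\int_{-r}^0 U_t(\theta)\phi_1(\theta)d\theta=\int_{-r}^0 G(t+\theta)(S\phi_1)(\theta)d\theta$ recorded just before the statement.

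By linearity once more I would split $y_d$ according to its data. The definition (\ref{25/05/06(1)}) of the fundamental solution says precisely that $G(t)\phi_0$ is the solution of (\ref{17/05/06(3)}) with $f\equiv0$ and data $(\phi_0,0)$, so it remains to show that the history term $h(t):=\int_{-r}^0 U_t(\theta)\phi_1(\theta)d\theta$ is the solution with $f\equiv0$ and data $(0,\phi_1)$. The regularity needed to apply the uniqueness in Theorem \ref{29/05/2013(1)} is already available: rewriting $h(t)=\int_0^t G(t-\theta)\bar\phi_1(\theta)d\theta$ with $\bar\phi_1={\bf 1}_{[0,r]}(S\phi_1)(-\cdot)\in L^2([0,T];X)$ and invoking Step 1 of the proof of Proposition \ref{16/08/2013(10)} places $h$ in $L^2([0,T];Z)\cap W^{1,2}([0,T];X)$.

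The substantive step is to verify that $\tilde y:=G(t)\phi_0+h(t)$, read together with the prescribed history $\phi_1$ on $[-r,0]$, satisfies the integral equation (\ref{17/05/06(3)}) with $f\equiv0$. First I would insert $\tilde y$ into the delayed terms $\alpha A_1\tilde y(s-r)$ and $\int_{-r}^0\beta(\theta)A_2\tilde y(s+\theta)d\theta$, splitting each inner integral at the instant where the delayed argument crosses $0$ so as to separate the prescribed history values $\phi_1$ from the already-solved values $G(\cdot)\phi_0+h(\cdot)$. Then I would collapse the resulting iterated integrals by Fubini's theorem, using the defining convolution equation (\ref{16/08/2013(1)}) for $G$ and the smoothing $G(t):X\to Z$ from Proposition \ref{14/08/2013(91)}; the kernel $\alpha G(t-\theta-r)A_1+\int_{-r}^\theta\beta(\tau)G(t-\theta+\tau)A_2d\tau$ defining $U_t(\theta)$ is tailored precisely so that the history contributions reassemble into the structure operator (\ref{01/02/2012(1)}). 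Once $\tilde y$ is shown to satisfy (\ref{17/05/06(3)}), uniqueness in Theorem \ref{29/05/2013(1)} forces $\tilde y=y_d$, and combining this with the identification of $v$ yields (\ref{16/08/2013(50)}).

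The hard part will be the bookkeeping of the delayed terms over the initial interval $[0,r]$: there $s-r$ and $s+\theta$ are negative, so $\tilde y$ must be read as the given history $\phi_1$ rather than as the convolution $h$, and it is the matching of these history values against the boundary terms thrown off when the order of integration is interchanged (or, in the equivalent differentiated formulation using Proposition \ref{14/08/2013(91)}, when the kernels of $G$ are differentiated) that produces the structure operator. Keeping every Fubini interchange legitimate with only the regularity $G(t):X\to Z$ at hand, and checking that $\tilde y$ retains the $L^2([0,T];Z)$-regularity demanded by Theorem \ref{29/05/2013(1)}, are the delicate points; everything else is routine linear superposition.
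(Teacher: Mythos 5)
Your proposal is correct and follows essentially the same route as the paper: decompose by linearity and uniqueness into the noise part (handled by Proposition \ref{16/08/2013(10)}), the part $G(t)\phi_0$ coming from the definition (\ref{25/05/06(1)}), and the history part $\int^0_{-r}U_t(\theta)\phi_1(\theta)d\theta$, with the second line of (\ref{16/08/2013(50)}) obtained by the Fubini identity involving the structure operator $S$. The only difference is that where you sketch a direct verification that the history term solves (\ref{17/05/06(3)}) with data $(0,\phi_1)$ --- which does work, most cleanly by moving the history values into the forcing $\bar\phi_1={\bf 1}_{[0,r]}(S\phi_1)(-\cdot)$ and invoking Step 1 of Proposition \ref{16/08/2013(10)} --- the paper simply cites \cite{jjsnht1993} for that deterministic fact.
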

 \begin{proof}
By the uniqueness of solutions in the class $L^2(\Omega\times [0, T]; Z)\cap L^2(\Omega; C([0, T]; W))$, it is obvious from the definition of fundamental solution $G$  and Proposition \ref{16/08/2013(10)} that
\begin{equation}
\label{18/08/2013(1)}
y(t, \Phi) = G(t)\phi_0 + \int^t_0 G(t-s)f(s)dB(s)\hskip 15pt \hbox{for}\hskip 15pt \Phi=(\phi_0, 0),\,\,\,\,\phi_0\in W.
\end{equation}
For $f=0$, $\phi_0=0$ and $\phi_1\in L^2_{{\mathscr F}_0}([-r, 0]; Z)$, it can be shown  shown as in \cite{jjsnht1993} that 
\begin{equation}
\label{18/08/2013(2)}
y(t, \Phi) = \int^0_{-r} U_t(\theta)\phi_1(\theta)d\theta,\hskip 20pt \Phi=(0, \phi_1)\in L^2_{{\mathscr F}_0}(\Omega; {\cal X}).
\end{equation}
Combining (\ref{18/08/2013(1)}) and (\ref{18/08/2013(2)}), we may easily show  the formula (\ref{16/08/2013(50)}).
\end{proof}

\section{Stationary Solution}

We consider  the  system (\ref{13/08/2013(1)}) with deterministic initial data $\Phi\in {\cal X}$ and $f(\cdot)\equiv f\in X$.

\begin{definition}
\rm A solution $y=\{y(t); t\ge -r\}$  of  (\ref{13/08/2013(1)})
is called  {\it strongly stationary}, or simply {\it stationary},  if for any $h_1,\,\cdots, h_n\in W$,
\begin{equation}
\label{24/02/09(1)}
{\mathbb E}\Big[\exp\Big(i\sum^n_{k=1}\langle y(t_k+s), h_k\rangle_W\Big)\Big] =  {\mathbb E}\Big[\exp\Big(i\sum^n_{k=1}\langle y(t_k), h_k\rangle_W\Big)\Big],
\end{equation}
for all $s\ge 0$, $t_k\ge -r$, $k=1,\cdots, n$.
We say that (\ref{13/08/2013(1)}) has a stationary  solution $y$ if there exists an initial $\Phi=(\phi_0, \phi_1)\in {\cal X}$ such that $y(t, \Phi)$, $t\ge 0$, is a stationary solution of (\ref{13/08/2013(1)}) with $y(0)=\phi_0$, $y_0=\phi_1$.
A stationary solution is said to be {\it uniquely determined\/} if any two stationary solutions of   (\ref{13/08/2013(1)}) have the same finite dimensional distributions.
\end{definition}

\begin{theorem} 
\label{16/08/2013(100)} 
Suppose that  the $C_0$-semigroup  $e^{t{\cal A}}$, $t\ge 0$, defined in (\ref{18/08/2013(70)}) is exponentially stable, i.e., there exist constants $M\ge 1$ and $\mu>0$ such that 
\begin{equation}
\label{23/09/07(4)}
\|e^{t{\cal A}}\|\le Me^{-\mu t}\,\,\,\,\hbox{for all}\,\,\,\,\,t\ge 0,
\end{equation}
 then there exists a unique stationary solution of  (\ref{13/08/2013(1)}). This stationary solution is a zero mean Gaussian process with the covariance operator $K(\cdot)$ given on $Z$ by
\begin{equation}
\label{10/09/07(1)}
\begin{split}
 K(t) = \int^\infty_0 (G(t+s)f)\otimes (G(s)f)ds,\hskip 15pt t\ge -r.
\end{split}
\end{equation}
Here for  $a,\,b\in W,$ $(a\otimes b)v := a\langle b, v\rangle_W\in W$ for any $v\in W$.
Moreover,  for any $x\in Z$,  $K(t)x\in Z$,  $K(t)x$ is strongly differentiable and
\begin{equation}
\label{14/08/07(1)}
\frac{dK(t)}{dt}x= AK(t)x + \alpha A_1K(t-r) x + \int^0_{-r} \beta(\theta)A_2K(t+\theta)xd\theta,\hskip 20pt t\ge 0.
\end{equation}
\end{theorem}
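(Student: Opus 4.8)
The plan is to construct the stationary solution explicitly as a two-sided stochastic convolution against the fundamental solution $G$, and then read every asserted property off that representation; the exponential stability hypothesis \eqref{23/09/07(4)} enters only through decay estimates on $G$. By the definition \eqref{25/05/06(1)} and the semigroup relation \eqref{18/08/2013(70)}, the first coordinate of $e^{t{\cal A}}(x,0)$ is $G(t)x$, whence $\|G(t)\|_{{\mathscr L}(W)}\le Me^{-\mu t}$. Since the forcing $f\in X$ need not lie in $W$, I would treat it through the flow property $e^{s{\cal A}}\bigl(G(t)f,(G(t+\theta)f)_{\theta}\bigr)=\bigl(G(t+s)f,(G(t+s+\theta)f)_{\theta}\bigr)$: the time-$r$ segment $\bigl(G(r)f,(G(r+\theta)f)_{\theta}\bigr)$ lies in ${\cal X}$ because $\int_0^r\|G(u)f\|_Z^2\,du<\infty$, the latter being exactly what Proposition \ref{16/08/2013(10)} yields through the It\^o isometry $\mathbb{E}\|v(t)\|_Z^2=\int_0^t\|G(u)f\|_Z^2\,du$. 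This gives $\|G(t)f\|_W\le M'e^{-\mu t}$ for $t\ge r$, and integrating the exponential bound on the $L^2_r$-component of $e^{t{\cal A}}$ over $[r,\infty)$ further yields $\int_0^\infty\|G(u)f\|_Z^2\,du<\infty$. With these bounds the Bochner integral \eqref{10/09/07(1)} converges in the nuclear operators on $W$ for every $t\ge -r$ (note $G(\tau)={\rm O}$ for $\tau<0$, so the integrand vanishes for $s<-t$ when $t<0$).

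Next I would build the candidate. Enlarging the probability space to carry a two-sided ${\mathscr F}_t$-Brownian motion I set $y^*(t)=\int_{-\infty}^t G(t-s)f\,dB(s)$, the integral converging in $L^2(\Omega;W)$ by the decay above; as a Wiener integral of a deterministic $W$-valued kernel, $y^*$ is a zero-mean Gaussian process. Taking the Gaussian initial segment $\Phi^*=(y^*(0),y^*_0)$, which lies in $L^2_{{\mathscr F}_0}(\Omega;{\cal X})$ precisely because $\int_0^\infty\|G(u)f\|_Z^2\,du<\infty$, a direct computation with the variation of constants formula \eqref{16/08/2013(50)} and the structure operator $S$ identifies $y(t,\Phi^*)$ with $y^*(t)$ for $t\ge 0$; hence $y^*$ solves \eqref{13/08/2013(1)}.

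Stationarity I would verify through \eqref{24/02/09(1)}. Since $y^*$ is zero-mean Gaussian, it suffices to show its covariances depend only on time lags. By the It\^o isometry for the two-sided integral, for $\tau\ge 0$ and $h,k\in W$,
\[
\mathbb{E}\bigl[\langle y^*(t+\tau),h\rangle_W\langle y^*(t),k\rangle_W\bigr]=\int_0^\infty\langle G(\tau+u)f,h\rangle_W\langle G(u)f,k\rangle_W\,du=\langle K(\tau)k,h\rangle_W,
\]
after the substitution $u=t-s$ and using $G(\tau)={\rm O}$ for $\tau<0$; this is independent of $t$, so all finite-dimensional characteristic functions are shift-invariant, which is \eqref{24/02/09(1)}, and it identifies $K$ in \eqref{10/09/07(1)} as the covariance. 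For uniqueness I would reuse the decay: for an arbitrary stationary solution $y(\cdot,\Phi)$, formula \eqref{16/08/2013(50)} shows the $\Phi$-dependent terms tend to $0$ in $L^2(\Omega;W)$ as the shift $s\to\infty$ while the stochastic convolution converges in law to $y^*$, so the finite-dimensional laws of $y(t_k+s,\Phi)$ converge to those of $y^*(t_k)$; stationarity forces them to equal the laws of $y(t_k,\Phi)$, and hence every stationary solution shares the finite-dimensional distributions of $y^*$.

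Finally I would derive the regularity of $K$ and the equation \eqref{14/08/07(1)} from Proposition \ref{14/08/2013(91)}. For $x\in Z$ and $t\ge 0$, writing $K(t)x=\int_0^\infty G(t+s)f\,\langle G(s)f,x\rangle_W\,ds$, the bound $\int_0^\infty\|G(t+s)f\|_Z\,|\langle G(s)f,x\rangle_W|\,ds\le\bigl(\int_t^\infty\|G(w)f\|_Z^2\,dw\bigr)^{1/2}\bigl(\int_0^\infty\|G(s)f\|_W^2\,ds\bigr)^{1/2}\|x\|_W$ shows the integral converges in $Z$, so $K(t)x\in Z$. Then I would differentiate under the integral sign and substitute the first identity of Proposition \ref{14/08/2013(91)}, $\tfrac{d}{dt}G(t+s)f=AG(t+s)f+\alpha A_1G(t+s-r)f+\int_{-r}^0\beta(\theta)A_2G(t+s+\theta)f\,d\theta$, recognizing the three resulting integrals as $AK(t)x$, $\alpha A_1K(t-r)x$ and $\int_{-r}^0\beta(\theta)A_2K(t+\theta)x\,d\theta$, which is \eqref{14/08/07(1)}. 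The main obstacle sits exactly here: justifying the interchange of the differentiation, and of the unbounded operators $A,A_1,A_2$, with the infinite integral. This requires dominated-convergence bounds controlling $\|AG(t+s)f\|_X$ and the delayed contributions integrably in $s$ --- combining the analytic-type smoothing of $G$ on bounded argument-ranges with the exponential decay for large argument --- together with the closedness of $A$ and the continuity of $A_1,A_2$ from ${\mathscr D}(A)$ into $X$ to pull the operators through the Bochner integral.
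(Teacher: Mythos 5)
Your proposal is correct and its core is the same as the paper's: the paper also builds a two-sided Brownian motion, sets $U(t)=\int_{-\infty}^t G(t-s)f\,dB(s)$, reads off stationarity and the covariance \eqref{10/09/07(1)} from the Gaussian structure, and then derives \eqref{14/08/07(1)}. The differences are in how the verification steps are executed, and on balance your route is more self-contained. First, to see that the candidate solves \eqref{13/08/2013(1)}, the paper pairs $U(t)$ with $v\in X^*$ and uses stochastic Fubini together with the adjoint fundamental solution $G^*$ of Corollary \ref{14/08/2013(9199)}; you instead identify $y^*$ with $y(t,\Phi^*)$ via the variation of constants formula \eqref{16/08/2013(50)}. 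That works, but be aware your ``direct computation'' conceals the deterministic identity $G(t+u)f=G(t)G(u)f+\int_{-r}^0 G(t+\theta)\bigl(S\,G(u+\cdot)f\bigr)(\theta)\,d\theta$ for $u>0$, which you must extract by applying \eqref{16/08/2013(50)} (with zero noise) to the time-$u$ segment $\bigl(G(u)f,G(u+\cdot)f\bigr)\in{\cal X}$ of the fundamental solution, and then integrate against $dB(u)$ by stochastic Fubini. Second, for \eqref{14/08/07(1)} the paper again argues on adjoints, computing $\bigl[\alpha A_1K(t-r)+\int_{-r}^0\beta(\theta)A_2K(t+\theta)\,d\theta\bigr]^*v$ via Corollary \ref{14/08/2013(9199)} and recognizing $\tfrac{d}{dt}K^*(t)v-K^*(t)A^*v$; your direct differentiation under the integral using Proposition \ref{14/08/2013(91)} is equivalent, and the domination you flag as the main obstacle is indeed supplied by the bound $\int_0^\infty\|G(s)f\|_Z^2\,ds<\infty$ you established, since $A,A_1,A_2\in{\mathscr L}(Z,X)$ make all three derivative terms square-integrably bounded by $\|G(\cdot)f\|_Z$ and Cauchy--Schwarz applies. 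Third, you add two things the paper's written proof actually lacks: an explicit uniqueness argument (decay of the $\Phi$-dependent terms in \eqref{16/08/2013(50)} plus convergence in law of the one-sided convolution, with stationarity pinning the finite-dimensional laws), which the paper asserts but never proves, implicitly deferring to \cite{kl08(2),kl11}; and a careful treatment of the fact that $f\in X$ need not lie in $W$ --- the paper justifies well-definedness of $U(t)$ by \eqref{20/08/2013(1)} alone, which strictly speaking only controls $G(t)$ on $W$, whereas your combination of Proposition \ref{16/08/2013(10)} (It\^o isometry giving $\int_0^r\|G(u)f\|_Z^2\,du<\infty$) with the segment-flow trick genuinely closes this gap. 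In short: same construction, different verification machinery (variation-of-constants and direct differentiation versus duality through $G^*$), with your version buying uniqueness and the $X$-versus-$W$ regularity bootstrap at the cost of one hidden cocycle identity you should state and prove explicitly.
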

\begin{proof} 
For any $x\in W$, let $\Phi=(x, 0)$. For such an initial $\Phi\in {\cal X}$, $G(t)x= y(t, \Phi)$, the  solution of  (\ref{17/05/06(3)}) with $f\equiv 0$, and we thus have by virtue of (\ref{23/09/07(4)})
 that
\begin{equation}
\label{20/08/2013(1)}
\|G(t)x\|_W\le \|e^{t{\cal A}}\Phi\|_{\cal X} \le Me^{-\mu t}\|\Phi\|_{\cal X}= Me^{-\mu t}\|x\|_{W}\hskip 10pt \hbox{for all}\hskip 10pt t\ge 0,
\end{equation}
where $M\ge 1$ and $\mu>0$. Next, we split the remaining proofs into  several steps.

{\it Step 1}.  Let $B_1(t)$ and $B_2(t)$, $t\ge 0$, be two independent real-valued Brownian motion. We first extend them to obtain a two-sided Brownian motion on the whole time axis ${\mathbb R}^1$ by 
\begin{equation}
\label{23/02/09(1)}
B(t) =
\begin{cases}
B_1(t),\hskip 42pt t\ge 0,\\
-B_2(-t),\hskip 25pt t<0,
\end{cases}
\end{equation}
and  for $t\ge -r$, let
\begin{equation}
\label{23/02/09(2)}
\begin{split}
U(t) :=  \int^t_{-\infty} G(t-s)f dB(s)
\end{split}
\end{equation}
By virtue of (\ref{20/08/2013(1)}), it is easy to see that the process $U(t)\in Z\subset W$, $t>0$, in (\ref{23/02/09(2)}) is well-defined.  Also it is immediate  that ${\mathbb E}U(t)=0$ and the process $U(t)$, $t\ge 0$, is Gaussian. Moreover, let $-r\le t_1<\cdots < t_n$, we have for any $h_1,\,\cdots,\,h_n\in W$ that
 \begin{equation}
\label{23/02/09(3)}
\begin{split}
{\mathbb E}\exp\Big(i\sum^n_{k=1}&\langle h_k, U(t_k)\rangle_W\Big)\\
 =&\,\, \exp\Big\{-\frac{1}{2}\Big[\int^\infty_{t_n} \sum^n_{i,\,j=1}\langle (G(t_i+s-t_n)f)\otimes (G(t_j+s-t_n)f)h_i, h_j\rangle_W ds\\
&\,\, + {\bf 1}_{\{t_n>0\}}\int^{t_n}_0  \sum_{t_i,\,t_j>0}\langle (G(t_i +s-t_n)f)\otimes (G(t_j +s-t_n)f)h_i, h_j\rangle_W ds\Big]\Big\}\\
 =&\,\, \exp\Big\{-\frac{1}{2}\Big[\int^\infty_{0} \sum^n_{i,\,j=1}\langle (G(t_i-t_j+s)f)\otimes G(s)f)h_i, h_j\rangle_W ds\Big]\Big\}.
\end{split}
\end{equation}
Thus, the process $U$ is stationary in the sense of (\ref{24/02/09(1)}). Moreover, from (\ref{23/02/09(3)}) we get that this stationary solution is a zero mean Gaussian process with covariance operator given by (\ref{10/09/07(1)}).

{\it Step 2.} We show that $U(t)$, $t\ge -r$, in (\ref{23/02/09(2)}) is a  solution of (\ref{13/08/2013(1)}). To this end,  let $v\in X^*$ and by using the  stochastic Fubini's theorem, Corollary \ref{14/08/2013(9199)}  and the fact that $G(t)={\rm O}$ for $t<0$,
 we have for any $t\ge 0$ that
\[
\label{23/02/09(10)}
\begin{split}
&\int^t_0 \langle U(s), A^*v\rangle_{Z, Z^*} ds + \Big\langle \int^0_{-\infty} G(-u)fdB(u), v\Big\rangle_{X, X^*}\\
& = \int^t_0 \Big\langle \int^s_{-\infty} G(s-u)fdB(u), A^*v\Big\rangle_{Z, Z^*}ds +  \Big\langle\int^0_{-\infty} G(-u)fdB(u), v\Big\rangle_{X, X^*}\\
&= \int^0_{-\infty}\Big\langle fdB(u), \int^t_0 \frac{d}{ds}G^*(s-u)vds\Big\rangle_{X, X^*} +  \int^t_{0}\Big\langle fdB(u), \int^t_u \frac{d}{ds}G^*(s-u)vds\Big\rangle_{X, X^*}\\
&\,\,\,\,+ \Big\langle \int^0_{-\infty} G(-u)fdB(u), v\Big\rangle_{X, X^*}\\
&\,\,\,\, - \int^t_0 \Big\langle \int^s_{-\infty} \alpha A_1G(s-u-r)fdB(u) + \int^s_{-\infty} \int^0_{-r}\beta(\theta)A_2G(s-u+\theta)fd\theta dB(u), v\Big\rangle_{X, X^*}ds\\
& = \int^t_{-\infty}\Big\langle fdB(u), G^*(t-u)v\Big\rangle_{X, X^*}  - \int^0_{-\infty} \Big\langle fdB(u), G^*(-u)v\Big\rangle_{X, X^*} -\int^t_0 \langle fdB(u), v\rangle_{X, X^*}\\
&\,\,\,\, + \Big\langle \int^0_{-\infty} G(-u)fdB(u), v\Big\rangle_{X, X^*} - \int^t_0 \Big\langle \int^{s-r}_{-\infty} \alpha A_1G(s-r-u)fdB(u)\\
&\,\,\,\,+ \int^0_{-r}\int^{s+\theta}_{-\infty} \beta(\theta)A_2G(s-u+\theta)fdB(u)d\theta, v\Big\rangle_{X, X^*}ds\\
&= \Big\langle U(t) -\int^t_0 fdB(u), v\Big\rangle_{X, X^*} - \int^t_0\Big\langle  \alpha A_1U(s-r) + \int^0_{-r} \beta(\theta)A_2U(s+\theta)d\theta, v\Big\rangle_{X, X^*}ds.  
\end{split}
\]
 Since $v\in X^*$ is arbitrary, we get that $U(t)$, $t\ge 0$, in (\ref{23/02/09(2)}) is a  solution of (\ref{13/08/2013(1)}). 

{\it Step 3.}
To show the remainder of the theorem, we use  Corollary \ref{14/08/2013(9199)} to derive that for any $v\in X^*$,
\[
\begin{split}
\Big[\alpha A_1&K(t-r) + \int^0_{-r} \beta(\theta)A_2K(t+\theta)d\theta\Big]^*v\\
&=   \int^\infty_0 \alpha (G(s)^*f)\otimes (G(t+s-r)^*f)A_1^*vds\\
&\,\,\,\,\,\,\, + \int^0_{-r}\beta(\theta)\int^\infty_0 (G(s)^*f)\otimes (G(t+s-r)^*f)A^*_2vds d\theta\\
&= \int^\infty_0 \Big((G^*(s)f)\otimes \frac{d}{dt}(G^*(s+t)f)\Big)vds -  \int^\infty_0 \Big((G^*(s)f)\otimes (G^*(s+t)f)A^*\Big)vds\\
&= \frac{d}{dt}K^*(t)v - K^*(t)A^*v.
\end{split}
\]
From this, one can easily get that the derivative $(dK(t)/dt)x$ exists for any $x\in Z$ and moreover the equality  (\ref{14/08/07(1)}) holds true. The proof is thus complete.
\end{proof}

In the sequel we shall use Theorem \ref{16/08/2013(100)} to find stationary solutions for the fundamental model equation (\ref{13/08/2013(1)}). 
 It is well known that when the solution semigroup $e^{t{\cal A}}$, $t\ge 0$, of (\ref{13/08/2013(1)}) and its infinitesimal generator ${\cal A}$ defined in (\ref{18/08/2013(70)}) and (\ref{20/08/2013(20)}) satisfy the spectral mapping theorem, then 
\begin{equation}
\label{20/08/2013(30)}
\sup\{\hbox{Re}\,\lambda:\,\lambda\in \sigma({\cal A})\} = \inf\{\mu\in {\mathbb R}:\,\|e^{t{\cal A}}\|\le Me^{\mu t}\,\,\hbox{for some}\,\, M>0\}.
\end{equation}
In other words, the stability properties of the semigroup $e^{t{\cal A}}$, $t\ge 0$, can be obtained by the location of the spectrum of $A$. For instance, this can be done when the  semigroup, $e^{t{\cal A}}$, $t\ge 0$, is compact. In  \cite{kl08(2), kl11}, it is shown that if $A$ generates a compact semigroup and $A_1$, $A_2$ both are bounded, then the semigroup $e^{t{\cal A}}$, $t\ge 0$, is eventually compact. In this case, the relation  (\ref{20/08/2013(30)}) could be used to obtain stationary solutions of (\ref{13/08/2013(1)}). 

When $A_1$, $A_2$ are unbounded, the situation becomes quite complicated.  For instance, let us consider Example 1.1 where $A$ generates a compact semigroup with $Z={\mathscr D}(A)$,  $A_1\in {\mathscr L}({\mathscr D}(A), X)$ and $A_2=0$,  it was shown  that the associated solution semigroup   $e^{t{\cal A}}$, $t\ge 0$,  in (\ref{13/08/2013(1)}) is generally  not compact (see \cite{Gdbkkes85(2)}) or even not eventually norm continuous (see \cite{jj1991}). On the other hand, for Example 1.2 with  $A_1=0$ and $A_2\in {\mathscr L}(V, V^*)$, the solution semigroup $e^{t{\cal A}}$, $t\ge 0$, in (\ref{13/08/2013(1)}) is generally not compact, although  it could be eventually norm continuous (see \cite{jj1991}). Due to this complexity, it is necessary for us to find stationary solutions for the stochastic system (\ref{13/08/2013(1)}) by handling  the discrete  and distributed delays  separately.

\section{Unbounded Delay Operators}

 We first state some results about the following deterministic equation 
\begin{equation}
\label{11/08/2013(30)}
\begin{cases}
{dy(t)}/{dt} =Ay(t)  + \alpha A y(t-r) +\displaystyle\int^0_{-r} \beta(\theta)A y(t+\theta)d\theta,\,\,\,\,t\ge 0,\\
y(0)=\phi_0,\,\,\,\,y_0=\phi_1,\,\,\,\Phi=(\phi_0, \phi_1)\in {\cal X},
\end{cases}
\end{equation} 
where $\alpha\in {\mathbb R}$ and $\beta\in L^1([-r, 0]; {\mathbb R})$. In this case, the characteristic operator $\Delta$  defined in (\ref{20/08/2013(70)}) is given by $\Delta(\lambda)x = \lambda x - n(\lambda)Ax$ for each $\lambda\in {\mathbb C}$, $x\in {\mathscr D}(A),$
where 
\begin{equation}
\label{23/08/13(21)}
n(\lambda) = 1 + \alpha e^{-\lambda r} + \int^0_{-r} \beta(\theta) e^{\lambda\theta}d\theta,\hskip 15pt \lambda\in {\mathbb C}.
\end{equation}
In addition, we define  
\begin{equation}
\label{01/09/13(10)}
\begin{cases}
\Gamma_C = \{\lambda\in {\mathbb C}:\, n(\lambda)\not = 0,\,\lambda n(\lambda)^{-1}\in \sigma_C(A)\},\\
\Gamma_R = \{\lambda\in {\mathbb C}:\, n(\lambda)\not = 0,\,\lambda n(\lambda)^{-1}\in \sigma_R(A)\},\\
\Gamma_P = \{\lambda\in {\mathbb C}:\, n(\lambda)\not = 0,\,\lambda n(\lambda)^{-1}\in \sigma_P(A)\},\\
\Gamma_0 = \{\lambda\in {\mathbb C}:\, \lambda\not = 0,\,n(\lambda)=0\},\\
\Gamma_1 = \{\lambda\in {\mathbb C}:\, n(\lambda)\not=0,\,\lambda n(\lambda)^{-1}\in \sigma(A)\}.\\
\end{cases}
\end{equation}

\begin{proposition}
\label{22/08/13(2)}
 {\rm (See \cite{snht96})} For the characteristic operator $\Delta$ and the associated generator ${\cal A}$ of the equation (\ref{11/08/2013(30)}), it is true that 

(i) $\Gamma_0\subset \sigma_C({\cal A}) \subset \sigma_C(\Delta) =\Gamma_C\cup \Gamma_0;$

(ii) $\sigma_R({\cal A}) = \sigma_R(\Delta) = \Gamma_R;$

(iii) 
\[
\sigma_P({\cal A}) = \sigma_P(\Delta) =
\begin{cases}
\Gamma_P\hskip 40pt &\hbox{if}\hskip 20pt 1 + \alpha + \displaystyle\int^0_{-r} \beta(\theta)d\theta\not= 0,\\
\Gamma_P\cup \{0\}\hskip 40pt &\hbox{if}\hskip 20pt 1 + \alpha + \displaystyle\int^0_{-r} \beta(\theta)d\theta= 0.
\end{cases}
\]
\end{proposition}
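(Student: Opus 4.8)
The plan is to reduce everything to the scalar-operator structure $\Delta(\lambda)=\lambda-n(\lambda)A$ recorded just above the statement, to compute the three spectra of $\Delta$ explicitly in terms of the sets $\Gamma_C,\Gamma_R,\Gamma_P,\Gamma_0$ of \eqref{01/09/13(10)}, and only then to transport the conclusions to $\mathcal{A}$ through Theorem \ref{20/08/2013(80)}. I would split $\mathbb{C}$ according to whether $n(\lambda)=0$ or not, keeping $\lambda=0$ as a separate sub-case since there the product $n(\lambda)A$ degenerates.

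On the region $\{n(\lambda)\neq 0\}$ I would factor $\Delta(\lambda)=n(\lambda)\bigl(\lambda n(\lambda)^{-1}-A\bigr)$. Multiplication by the nonzero scalar $n(\lambda)$ leaves injectivity, density of the range, and boundedness of the inverse all unchanged, so $\Delta(\lambda)$ is of exactly the same spectral type as $\mu-A$ at the point $\mu=\lambda n(\lambda)^{-1}$. Thus $\lambda$ lies in $\sigma_P(\Delta)$, $\sigma_R(\Delta)$, $\sigma_C(\Delta)$, or $\rho(\Delta)$ according as $\mu$ lies in $\sigma_P(A)$, $\sigma_R(A)$, $\sigma_C(A)$, or $\rho(A)$, which by \eqref{01/09/13(10)} means $\lambda\in\Gamma_P$, $\Gamma_R$, $\Gamma_C$, or $\lambda\notin\Gamma_1$. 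In particular the case $\lambda=0,\ n(0)\neq 0$ (where $\mu=0$) is already absorbed here.

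Next I would dispose of $\{n(\lambda)=0\}$. If $\lambda\neq 0$, i.e. $\lambda\in\Gamma_0$, then $\Delta(\lambda)=\lambda I$ on $\mathscr{D}(A)$: it is injective, its range equals $\mathscr{D}(A)$, which is dense in $X$ but not all of $X$ because $A$ is unbounded, so $\Delta(\lambda)$ is injective with dense range yet not bijective. Since $\mathbb{C}=\rho(\Delta)\cup\sigma_C(\Delta)\cup\sigma_R(\Delta)\cup\sigma_P(\Delta)$ and the other three alternatives are excluded, this forces $\lambda\in\sigma_C(\Delta)$, whence $\Gamma_0\subset\sigma_C(\Delta)$. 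If instead $\lambda=0$ and $n(0)=0$, then $\Delta(0)={\rm O}$ on $\mathscr{D}(A)$ is not injective, so $0\in\sigma_P(\Delta)$; this one extra point appears exactly when $n(0)=1+\alpha+\int^0_{-r}\beta(\theta)d\theta=0$. Collecting the two regimes yields $\sigma_C(\Delta)=\Gamma_C\cup\Gamma_0$, $\sigma_R(\Delta)=\Gamma_R$, and the two-branch formula for $\sigma_P(\Delta)$ of (iii).

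Finally I would invoke Theorem \ref{20/08/2013(80)}. The equalities $\sigma_P(\mathcal{A})=\sigma_P(\Delta)$ and $\sigma_R(\mathcal{A})=\sigma_R(\Delta)$ deliver (ii) and (iii) at once, and $\sigma_C(\mathcal{A})\subset\sigma_C(\Delta)$ gives the second inclusion in (i). The only genuinely delicate point is the first inclusion $\Gamma_0\subset\sigma_C(\mathcal{A})$: Theorem \ref{20/08/2013(80)} supplies merely $\sigma_C(\Delta)\subset\sigma_C(\mathcal{A})\cup\rho(\mathcal{A})$, so a priori a point of $\Gamma_0$ could slip into $\rho(\mathcal{A})$. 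To rule this out I would argue directly from Proposition \ref{14/08/2013(70)}: for $\lambda\in\rho(\mathcal{A})$ the equation \eqref{28/07/2013(1)} must be solvable in $\phi_0\in\mathscr{D}(A)$ for every datum, in particular (taking $\psi_1\equiv 0$) the equation $\Delta(\lambda)\phi_0=\psi_0$ must be solvable for all $\psi_0\in W$. For $\lambda\in\Gamma_0$ this reads $\lambda\phi_0=\psi_0$ with $\phi_0\in\mathscr{D}(A)$, which is impossible once $\psi_0\in W\setminus\mathscr{D}(A)$; hence $\Gamma_0\cap\rho(\mathcal{A})=\emptyset$. Combined with $\Gamma_0\subset\sigma_C(\Delta)\subset\sigma_C(\mathcal{A})\cup\rho(\mathcal{A})$, this forces $\Gamma_0\subset\sigma_C(\mathcal{A})$, completing (i). I expect this last step, separating $\Gamma_0$ from $\rho(\mathcal{A})$, to be the main obstacle, precisely because it is the one place where the soft transfer theorem is insufficient and one must return to the explicit resolvent computation.
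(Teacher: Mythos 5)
Your proof is correct, but note that the paper itself offers no argument for this proposition: it is quoted with the bare citation ``(See \cite{snht96})''. What you have done is reconstruct, from the tools the paper does set up, the argument the citation hides, and your route is the natural one: the factorization $\Delta(\lambda)=n(\lambda)\bigl(\lambda n(\lambda)^{-1}-A\bigr)$ on $\{n(\lambda)\neq 0\}$, the degenerate cases $\lambda\in\Gamma_0$ and $\lambda=0$ with $n(0)=1+\alpha+\int^0_{-r}\beta(\theta)d\theta=0$, the transfer of (ii) and (iii) through Theorem \ref{20/08/2013(80)}, and --- correctly singled out as the only delicate point --- the exclusion $\Gamma_0\cap\rho({\cal A})=\emptyset$ via the solvability relation (\ref{28/07/2013(1)}) of Proposition \ref{14/08/2013(70)} with $\psi_1\equiv 0$, which for $\lambda\in\Gamma_0$ degenerates to $\lambda\phi_1(0)=\psi_0$ with $\phi_1(0)\in{\mathscr D}(A)$ and fails for $\psi_0\in W\setminus{\mathscr D}(A)$. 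An alternative one-line closing of that same gap is available inside the paper: the resolvent formula (\ref{19/11/13(10)}) forces $\rho({\cal A})=\rho(\Delta)$, and since you have already shown $\Gamma_0\subset\sigma_C(\Delta)$, no point of $\Gamma_0$ can lie in $\rho({\cal A})$; your direct argument and this shortcut are equally legitimate.

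Two caveats you should make explicit. First, your $\Gamma_0$ step argues by exclusion from the partition ${\mathbb C}=\rho(\Delta)\cup\sigma_C(\Delta)\cup\sigma_R(\Delta)\cup\sigma_P(\Delta)$, but with the paper's literal definition of $\sigma_C(\Delta)$ (requiring $\Delta(\lambda)^{-1}$ to be \emph{unbounded} on $X$) this is delicate precisely at $\Gamma_0$: there $\Delta(\lambda)=\lambda I|_{{\mathscr D}(A)}$ and its inverse $\lambda^{-1}I$, defined on the dense subspace ${\mathscr D}(A)$, is bounded in the $X$-norm, because $\Delta(\lambda)$ fails to be closed as an operator in $X$ when $n(\lambda)=0$. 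The exclusion argument is sound under the intended reading (consistent with \cite{snht96} and with the paper's own partition statement) that the continuous spectrum consists of the injective $\lambda$ with dense but proper range, i.e.\ those for which $\Delta(\lambda)^{-1}$ is not an everywhere-defined element of ${\mathscr L}(X)$; one sentence acknowledging this reading would make the step airtight. Second, you silently use ${\mathscr D}(A)\neq X$ (in placing $\Gamma_0$ outside $\rho(\Delta)$) and $W\setminus{\mathscr D}(A)\neq\emptyset$ (in the contradiction); both hold here because $A$ is genuinely unbounded, ${\mathscr D}(A)$ being densely and properly embedded in $W=({\mathscr D}(A),X)_{1/2,2}$, and indeed the inclusion $\Gamma_0\subset\sigma_C({\cal A})$ would be false for bounded $A$, since then $\Delta(\lambda)=\lambda I$ is bijective on $X$ and $\Gamma_0\subset\rho(\Delta)$. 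Stating this hypothesis is cheap and shows exactly where unboundedness enters.
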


\subsection{Distributed Delay}

Now we pass on to consider the equation (\ref{13/08/2013(1)}) with  $A_1=0$, $A_2=A$ and $f(\cdot)=f\in W$, i.e., 
\begin{equation}
\label{11/08/2013(30070)}
\begin{cases}
\displaystyle\frac{dy(t)}{dt} =Ay(t)  + \displaystyle\int^0_{-r} \beta(\theta)A y(t+\theta)d\theta + f \dot B(t),\,\,\,\,t\ge 0,\\
y(0)=\phi_0,\,\,\,\,y_0=\phi_1,\,\,\,\Phi=(\phi_0, \phi_1)\in {\cal X},
\end{cases}
\end{equation} 
where $A$ is either assumed to generate an analytic semigroup on a Hilbert space $X=H$ as in Example 1.1 or given by a sesquilinear form $a(\cdot, \cdot)$ as in Example 1.2.
In the first case, it was shown by \cite{Gdbkkes85(2)} that when the weight function $\beta(\cdot)$ belongs to $W^{1, 2}([-r, 0]; {\mathbb R})$, the associated solution semigroup $e^{t{\cal A}}$, $t\ge 0$, is differentiable for $t>r$ or the solution semigroup is norm continuous for $t>3r$ when $\beta(\cdot)$ is H\"older continuous in the second, both of which imply further that  (\ref{20/08/2013(30)}) is fulfilled. Hence, we can describe  conditions ensuring a unique stationary solution to the equation (\ref{11/08/2013(30070)}) by showing 
\begin{equation}
\label{22/08/13(20)}
\sup\{Re\,\lambda:\, \lambda\in \sigma({\cal A})\}<0.
\end{equation}

\begin{proposition}
\label{27/08/13(40)}
 Suppose that $\sigma(A)\subset (-\infty, -c_0]$ for some $c_0>0$ and the function $\beta$ in  (\ref{11/08/2013(30070)}) satisfies  
\begin{equation}
\label{22/08/13(11)}
\|\beta\|_{L^1([-r, 0]; {\mathbb R})}<1.
\end{equation}
Then  there exists a unique stationary solution for the equation   (\ref{11/08/2013(30070)}).
\end{proposition}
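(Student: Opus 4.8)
The plan is to reduce the existence and uniqueness of a stationary solution to the exponential stability of the solution semigroup $e^{t{\cal A}}$, and then to verify that stability through a spectral bound. By Theorem \ref{16/08/2013(100)}, once I know that (\ref{23/09/07(4)}) holds, i.e. $\|e^{t{\cal A}}\|\le Me^{-\mu t}$ for some $M\ge 1$ and $\mu>0$, the unique stationary solution follows at once. Since the kernel $\beta$ is H\"older continuous, the solution semigroup of (\ref{11/08/2013(30070)}) is norm continuous for $t>3r$, so the spectral mapping theorem in the form (\ref{20/08/2013(30)}) is available; hence exponential stability is equivalent to the spectral bound (\ref{22/08/13(20)}), namely $\sup\{\mathrm{Re}\,\lambda:\lambda\in\sigma({\cal A})\}<0$. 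The whole proof thus rests on locating $\sigma({\cal A})$ strictly inside the open left half plane.

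To analyze $\sigma({\cal A})$ I will specialize Proposition \ref{22/08/13(2)} to the present case $\alpha=0$, $A_1=0$, $A_2=A$, so that $n(\lambda)=1+\int^0_{-r}\beta(\theta)e^{\lambda\theta}d\theta$ and $\Delta(\lambda)=\lambda-n(\lambda)A$. Parts (i)--(iii) give $\sigma_C({\cal A})\subset\Gamma_C\cup\Gamma_0$, $\sigma_R({\cal A})=\Gamma_R$ and $\sigma_P({\cal A})=\Gamma_P$; the extra point $\{0\}$ in (iii) is excluded because $n(0)=1+\int^0_{-r}\beta(\theta)d\theta\ge 1-\|\beta\|_{L^1([-r,0];{\mathbb R})}>0$ under (\ref{22/08/13(11)}). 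Since $\Gamma_C,\Gamma_R,\Gamma_P\subset\Gamma_1$, this yields $\sigma({\cal A})\subset\Gamma_0\cup\Gamma_1$, and it remains only to bound $\mathrm{Re}\,\lambda$ uniformly from above on $\Gamma_0\cup\Gamma_1$.

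The core of the argument is an elementary but uniform estimate on $n(\lambda)$. I would fix $\delta>0$ small enough that $\|\beta\|_{L^1}e^{\delta r}<1$, which is possible because $\|\beta\|_{L^1}<1$, and set $\kappa:=1-\|\beta\|_{L^1}e^{\delta r}>0$. For any $\lambda$ with $\mathrm{Re}\,\lambda\ge-\delta$ and any $\theta\in[-r,0]$ one has $|e^{\lambda\theta}|=e^{\mathrm{Re}\,\lambda\cdot\theta}\le e^{\delta r}$, whence $|n(\lambda)-1|\le\|\beta\|_{L^1}e^{\delta r}$ and therefore $\mathrm{Re}\,n(\lambda)\ge\kappa>0$; in particular $n(\lambda)\ne 0$, so no point of $\Gamma_0$ lies in $\{\mathrm{Re}\,\lambda\ge-\delta\}$. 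For $\lambda\in\Gamma_1$ the condition $\lambda n(\lambda)^{-1}\in\sigma(A)\subset(-\infty,-c_0]$ means $\lambda=-\mu\,n(\lambda)$ with $\mu\ge c_0$, so that $\mathrm{Re}\,\lambda=-\mu\,\mathrm{Re}\,n(\lambda)\le-c_0\kappa$ whenever $\mathrm{Re}\,\lambda\ge-\delta$. Choosing $\delta$ additionally so small that $c_0\kappa>\delta$ (legitimate since $c_0\kappa\to c_0(1-\|\beta\|_{L^1})>0$ while $\delta\to 0$) makes the two inequalities $\mathrm{Re}\,\lambda\ge-\delta$ and $\mathrm{Re}\,\lambda\le-c_0\kappa<-\delta$ incompatible. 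Hence $\Gamma_0\cup\Gamma_1$, and a fortiori $\sigma({\cal A})$, is contained in $\{\mathrm{Re}\,\lambda<-\delta\}$, which is exactly (\ref{22/08/13(20)}).

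I expect the main obstacle to be precisely this last uniformity: the condition $\|\beta\|_{L^1}<1$ readily forces each individual spectral value into the open left half plane, but upgrading pointwise negativity to a genuine spectral gap $\sup\mathrm{Re}\,\lambda\le-\delta$ requires the self-consistent choice of $\delta$ above, exploiting that the defect $\kappa$ stays bounded away from zero and that $\sigma(A)$ is uniformly separated from the origin by $c_0$. Everything else---the reduction via Theorem \ref{16/08/2013(100)}, the applicability of the spectral mapping theorem (\ref{20/08/2013(30)}), and the spectral decomposition furnished by Proposition \ref{22/08/13(2)}---is a direct appeal to results already established.
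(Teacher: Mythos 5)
Your proposal is correct and follows essentially the same route as the paper: reduction to exponential stability of $e^{t{\cal A}}$ via Theorem \ref{16/08/2013(100)} together with the spectral mapping relation (\ref{20/08/2013(30)}), then exclusion of $\Gamma_0\cup\Gamma_1$ (hence of $\sigma({\cal A})$, by Proposition \ref{22/08/13(2)}) from a right half-plane using $\|\beta\|_{L^1([-r,0];{\mathbb R})}<1$. The only difference is cosmetic: where the paper establishes the spectral gap by contradiction along a sequence $\{\lambda_n\}$ with $\mathrm{Re}\,\lambda_n\ge 0$ or $\mathrm{Re}\,\lambda_n\to 0$ (via dominated convergence), you derive it directly with the explicit constants $\delta$ and $\kappa$, a slightly cleaner rendering of the same underlying estimate on $n(\lambda)$.
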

\begin{proof}  Note that from Proposition \ref{22/08/13(2)} we have $\sigma({\cal A}) \subset \Gamma_0\cap \Gamma_1.$
We shall show that under the assumptions in the theorem, there is a constant  $\mu>0$ such that Re$\,\lambda\le -\mu$ for all $\lambda\in \Gamma_0\cap \Gamma_1$ and hence for all $\lambda\in \sigma({\cal A})$.

First, for elements in  $\Gamma_0$, if there exist a sequence $\{\lambda_n\}\subset {\mathbb C}$ such that Re$\,\lambda_n\ge 0$ or Re$\,\lambda_n\to 0$ as $n\to\infty$, then by (\ref{23/08/13(21)}) and Dominated Convergence Theorem, it follows that 
\[
\begin{split}
1 &= \limsup_{n\to\infty}\Big|\int^0_{-r} \beta(\theta)e^{\lambda_n\theta}d\theta\Big| \le \int^0_{-r} |\beta(\theta)|d\theta<1,
\end{split}
\]
which is clearly a contradiction. Thus the desired result is obtained.

Now we consider  elements  in $\Gamma_1$.
If there exist a sequence $\{\lambda_n\}\subset {\mathbb C}$ such that Re$\,\lambda_n\ge 0$ or Re$\,\lambda_n\to 0$ as $n\to\infty$ with $\lambda_n/n(\lambda_n) =: -\delta_n \le -c_0$, then we get by taking the real part of the equation into account that
\[
1 + \frac{Re\,\lambda_n}{\delta_n} = -\int^0_{-r} \beta(\theta)e^{(Re\,\lambda_n)\theta}\cos [(Im\,\lambda_n)\theta]d\theta.\]
Letting $n\to\infty$ and using Dominated Convergence Theorem, we get immediately that 
\[
\begin{split}
1 &\le 1 + \liminf_{n\to\infty}\frac{Re\,\lambda_n}{c_0}\le \limsup_{n\to\infty}\Big|\int^0_{-r} \beta(\theta)e^{(Re\,\lambda_n)\theta}\cos [(Im\,\lambda_n)\theta]d\theta\Big|= \int^0_{-r} |\beta(\theta)|d\theta <1,
\end{split}
\]
which, once again, yields a contradiction. Combining the above results, we thus obtain that 
\[
Re\,\lambda\le -\mu\hskip 10pt \hbox{for some}\hskip 10pt \mu>0\hskip 10pt \hbox{and all}\hskip 10pt \lambda\in\sigma({\cal A}).\]
Therefore, the solution semigroup $e^{t{\cal A}}$, $t\ge 0$, is exponentially stable. This fact further implies that there exists a unique stationary solution of  (\ref{11/08/2013(30070)}). The proof is complete.
\end{proof}

\begin{remark}\rm
The condition (\ref{22/08/13(11)}) is optimal in the sense that if we replace (\ref{22/08/13(11)}) by 
\begin{equation}
\label{27/08/13(10)}
\|\beta\|_{L^1([-r, 0]; {\mathbb R})}<1+\varepsilon\hskip 20pt \hbox{for some}\hskip 20pt \varepsilon>0,
\end{equation}
then there may not exist a unique stationary solution. Indeed, in this case let us choose $ \beta= -\frac{1+\varepsilon/2}{r}$, which clearly satisfies (\ref{27/08/13(10)}). We shall show that for such a value $\beta$, the solution system $e^{t{\cal A}}$, $t\ge 0$, could be unstable. To see this, it suffices to prove that there exists a number $\lambda=x+iy\in \Gamma_0$ with $y=0$ and $x>0$ according to Proposition  \ref{22/08/13(2)} (i). 

To this end, let us consider numbers $\lambda=x+iy\in \Gamma_0$ with $y=0$. Suppose that $\beta(\theta)\equiv \beta<0$ in (\ref{11/08/2013(30070)})
and we analyze the roots of the equation 
\begin{equation}
\label{21/08/2013(1)}
x  + \beta (1- e^{-rx})=0,\hskip20pt x\in {\mathbb R}.
\end{equation}
We put $f(x) = x  + \beta(1- e^{-rx}),$ $x\in {\mathbb R}.$
Then it is easy to see that $f'(x) = 1 + \beta re^{-rx}$, $x\in {\mathbb R}.$ 
By solving the equation $f'(x)=0$, we get $x=\ln(-\beta r)/r$ which is the unique stationary point of $f$. Since $f''(x) = -\beta r^2e^{-rx}>0$ for all $x\in {\mathbb R}$, the function $f$ takes its minimum value at  $x=\ln(-\beta r)/r$. 
 As $-\frac{1+\varepsilon/2}{r} <-1/r$,  the minimum point $x=\ln(-\beta r)/r>0$. Since $x=0$ is a  solution of (\ref{21/08/2013(1)}), the other solution $x$ of (\ref{21/08/2013(1)}) thus satisfies $x>\ln(-\beta r)/r>0$. 
\end{remark}

\begin{example}\rm
We give an application of Proposition \ref{27/08/13(40)}
to the initial-boundary value problem of Dirichlet type of the stochastic retarded Laplace equation:
\begin{equation}
\label{22/08/13(70)}
\begin{cases}
\displaystyle\frac{\partial y(t, x)}{\partial t} = \frac{\partial^2 y(t, x)}{\partial x^2} +\int^0_{-r}ae^{b\theta}\cdot   \frac{\partial^2 y(t+\theta, x)}{\partial x^2}d\theta + f(x)\dot B(t),\,\,\,\,t\ge 0,\,\,\,\,x\in {\cal O},\\
y(0, \cdot) =\phi_0(\cdot)\in W^{1, 2}_0({\cal O}; {\mathbb R}),\\
y(t, \cdot)=\phi_1(t, \cdot)\in W^{2, 2}({\cal O}; {\mathbb R})\cap W_0^{1, 2}({\cal O}; {\mathbb R}),\,\,\,\,\hbox{a.e.}\,\,\,\,t\in [-r, 0).
\end{cases} 
\end{equation}
Here ${\cal O}$ is a bounded open subset of ${\mathbb R}^n$ with regular boundary $\partial {\cal O}$, $a,\,b\in {\mathbb R}$, $r>0$ and $f\in L^2({\cal O}; {\mathbb R})$.

We can re-write (\ref{22/08/13(70)}) as a stochastic  initial boundary problem  (\ref{11/08/2013(30070)}) in the Hilbert space $H= L^2({\cal O}; {\mathbb R})$ by setting
\[
\begin{cases}
A = \displaystyle\frac{\partial^2}{\partial x^2},\\
Z={\mathscr D}(A)= W^{2, 2}({\cal O}; {\mathbb R}^1)\cap W^{1, 2}_0({\cal O}; {\mathbb R}),\\
\beta(\theta) = a e^{b\theta},\,\,\,\theta\in [-r, 0].
\end{cases}
\]
We can obtain a solution of (\ref{22/08/13(70)}) defined in $[0, \infty)$ and further apply those results derived in the section to obtain its stationary solutions. In fact, note that $A= \partial^2/\partial x^2$ is a self-adjoint and negative operator and its spectrum satisfies $\sigma(A) = \sigma_P(A) \subset (-\infty, -c_0]$ for some $c_0>0$. Then by Proposition \ref{27/08/13(40)} and a direct computation, we obtain that when
\[
|a|\le
\begin{cases}
 e^{rb}/r,\hskip 30pt &\hbox{if}\hskip 15pt b\le 0,\\
1/r,\hskip 30pt &\hbox{if}\hskip 15pt  b> 0,
\end{cases}
\]
 the associated solution semigroup of  (\ref{22/08/13(70)}) is exponentially stable. Moreover, in this case we know by Theorem \ref{16/08/2013(100)} that the equation (\ref{22/08/13(70)}) has a unique stationary solution.
\end{example}

\subsection{Discrete Delay}

Now we want to consider the following stochastic system with discrete delay on a Hilbert space $X=H$ with $Z={\mathscr D}(A)$, 
\begin{equation}
\label{11/08/2013(30567)}
\begin{cases}
\displaystyle\frac{dy(t)}{dt} =Ay(t)  + A_1 y(t-r)+ f \dot B(t),\,\,\,\,t\ge 0,\\
y(0)=\phi_0,\,\,\,\,y_0=\phi_1,\,\,\,\Phi=(\phi_0, \phi_1)\in {\cal X},
\end{cases}
\end{equation}
where $A: {\mathscr D}(A)\subset H\to H$ generates an exponentially stable, analytic semigroup $e^{tA}$, $t\ge 0$, on the Hilbert space $H$ and $A_1\in {\mathscr L}({\mathscr D}(A), H)$. In contrast with (\ref{11/08/2013(30070)}), the solution semigroup $e^{t{\cal A}}$, $t\ge 0$, of (\ref{11/08/2013(305)})  is generally not norm continuous even though $A$ generates a compact semigroup. However, if we strengthern the conditions on $A_1$, it is still possible for the associated semigroup $e^{t{\cal A}}$, $t\ge 0$, to be compact and thus one can use the spectral mapping theorem again.

\begin{lemma}
\label{19/11/13(1)}
 Assume  that  $A$ generates an exponentially stable, analytic semigroup on $H$, i.e., $\|e^{tA}\|\le Me^{-\mu t}$, $t\ge 0$, for some $M>0$, $\mu>0$. Further, if there exists a number  $\delta\in [0, 1)$ such that $A_1\in {\mathscr L}( {\mathscr D}((-A)^\delta), H)$,
then $\Delta(\lambda)^{-1}$ is compact for all $\lambda\in \rho(\Delta)$ provided that  $A$ has compact resolvents.
\end{lemma}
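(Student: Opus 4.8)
The plan is to reduce the assertion to two facts: that $\Delta(\lambda)^{-1}$ carries $H$ boundedly into the domain $\mathscr{D}(A)$ equipped with its graph norm, and that this domain embeds compactly into $H$ once $A$ has compact resolvents. In the discrete case the characteristic operator is $\Delta(\lambda) = \lambda I - A - e^{-\lambda r}A_1$, so the only delicate point is the unboundedness of $A_1$, which the hypothesis $A_1 \in {\mathscr L}(\mathscr{D}((-A)^\delta), H)$ with $\delta \in [0,1)$ is designed to control.

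First I would record that $\|e^{tA}\|\le Me^{-\mu t}$ makes the resolvent $(\lambda - A)^{-1}=\int_0^\infty e^{-\lambda t}e^{tA}\,dt$ converge for $\mathrm{Re}\,\lambda>-\mu$, so $\sigma(A)\subset\{\mathrm{Re}\,\lambda\le -\mu\}$ and in particular $0\in\rho(A)$; hence $-A$ is positive and the fractional powers $(-A)^\delta$ are well defined with $(-A)^{\delta-1}$ bounded. For $y\in\mathscr{D}(A)$ one writes $(-A)^\delta y=(-A)^{\delta-1}(-A)y$, so that $\|(-A)^\delta y\|\le\|(-A)^{\delta-1}\|\,\|Ay\|$ and consequently
\[
\|A_1 y\| \le \|A_1 (-A)^{-\delta}\|\,\|(-A)^\delta y\| \le C\big(\|Ay\| + \|y\|\big).
\]
This shows $A_1$ is $A$-bounded, whence $\Delta(\lambda)\colon (\mathscr{D}(A),\|\cdot\|_{\mathscr{D}(A)})\to H$ is bounded. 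For $\lambda\in\rho(\Delta)$ it is by definition a bijection onto $H$, and since $A$ is closed, $(\mathscr{D}(A),\|\cdot\|_{\mathscr{D}(A)})$ is a Banach space, so the bounded inverse theorem yields that $\Delta(\lambda)^{-1}\colon H\to(\mathscr{D}(A),\|\cdot\|_{\mathscr{D}(A)})$ is bounded.

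Next I would invoke the compact resolvent hypothesis. Fixing $\mu_0\in\rho(A)$, the inclusion $\iota\colon(\mathscr{D}(A),\|\cdot\|_{\mathscr{D}(A)})\hookrightarrow H$ factors as $\iota=(\mu_0-A)^{-1}\circ(\mu_0-A)$, the bounded isomorphism $(\mu_0-A)\colon\mathscr{D}(A)\to H$ followed by the compact operator $(\mu_0-A)^{-1}\in{\mathscr L}(H)$; hence $\iota$ is compact. Writing $\Delta(\lambda)^{-1}=\iota\circ\big[\Delta(\lambda)^{-1}\colon H\to\mathscr{D}(A)\big]$ as a bounded operator followed by the compact embedding $\iota$, I conclude that $\Delta(\lambda)^{-1}$ is compact on $H$ for every $\lambda\in\rho(\Delta)$.

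The crux is the $A$-boundedness step, i.e. making sense of $\Delta(\lambda)$ as a bounded map out of the graph-norm space despite the unbounded $A_1$; for this the hypothesis with $\delta<1$ is more than enough, giving relative boundedness with arbitrarily small relative bound via the moment inequality. An alternative, closer to the later semigroup-compactness results, would use the factorization $\Delta(\lambda)=[I-e^{-\lambda r}A_1(\lambda-A)^{-1}](\lambda-A)$ on $\rho(A)$ together with the observation that $A_1(\lambda-A)^{-1}=[A_1(-A)^{-\delta}][(-A)^{\delta}(\lambda-A)^{-1}]$ is compact because $(-A)^{-(1-\delta)}$ is compact precisely when $\delta<1$, and then a Fredholm inversion $[I-K]^{-1}=I+\tilde K$ with $K,\tilde K$ compact. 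I would favour the embedding argument, however, since it dispenses with the requirement $\lambda\in\rho(A)$ and treats all points of $\rho(\Delta)$ simultaneously.
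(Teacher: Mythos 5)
Your proof is correct, but it follows a genuinely different route from the paper's. The paper defines $F_\lambda x := F(e^{\lambda\cdot}x)=e^{-\lambda r}A_1x$, invokes Pazy's moment inequality $\|F_\lambda x\|_H\le C(\rho^\delta\|x\|_H+\rho^{\delta-1}\|Ax\|_H)$ to show that $F_\lambda$ is $A$-bounded with $A$-bound zero, then verifies $\|F_\lambda R(\mu,A)\|_{\mathscr{L}(H)}\le\varepsilon<1$ for $\mathrm{Re}\,\mu$ large (combining $\|AR(\mu,A)\|\le M$ from analyticity with $\|R(\mu,A)\|\to 0$), and inverts via the factorization $(\mu I-A-F_\lambda)=(I-F_\lambda R(\mu,A))(\mu I-A)$ and a Neumann series, so that $\Delta(\lambda)^{-1}=R(\lambda,A)[I-F_\lambda R(\lambda,A)]^{-1}$ is compact for $\mathrm{Re}\,\lambda$ large; compactness at the remaining points of $\rho(\Delta)$ is then only asserted parenthetically. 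Your argument --- $\Delta(\lambda)^{-1}\in\mathscr{L}(H,\mathscr{D}(A))$ by the bounded inverse theorem (for which relative boundedness of $A_1$ with a fixed, not necessarily small, bound suffices), composed with the compact embedding $\mathscr{D}(A)\hookrightarrow H$ factored through $(\mu_0-A)^{-1}$ --- is shorter and, as you point out, treats every $\lambda\in\rho(\Delta)$ simultaneously; indeed your factorization is precisely what is needed to justify the paper's unproved step ``(then, for all $\lambda\in\rho(\Delta)$)''. What the paper's longer computation buys is information beyond the literal statement of the lemma: it shows that a full right half-plane $\{\mathrm{Re}\,\lambda\ge R\}$ lies in $\rho(\Delta)$ with uniformly bounded inverse, hence in particular that $\rho(\Delta)$ is nonempty, and that $A+F_\lambda$ generates a $C_0$-semigroup --- estimates of exactly the kind reused later (compare the Neumann series $\Delta(\lambda)^{-1}=R(\lambda,A)\sum_{n}(F_\lambda R(\lambda,A))^n$ on $\{\mathrm{Re}\,\lambda\ge 0\}$ in Example 5.3). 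Your alternative sketch via compactness of $A_1R(\lambda,A)=[A_1(-A)^{-\delta}][(-A)^{\delta-1}][(-A)R(\lambda,A)]$ and a Fredholm inversion is also sound, though only on $\rho(\Delta)\cap\rho(A)$, and your stated reason for preferring the embedding argument is exactly right; it is also worth noticing that your main argument never actually uses $\delta<1$ (boundedness $A_1\in\mathscr{L}(\mathscr{D}(A),H)$ would do), so the hypothesis $\delta<1$ is really needed for locating $\rho(\Delta)$, not for the compactness claim itself.
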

\begin{proof} For arbitrary $\lambda\in {\mathbb C}$, we define $F_\lambda:\,  {\mathscr D}((-A)^\delta)\to H$ by $F_\lambda x := F(e^{\lambda\cdot }x)$ for $x\in {\mathscr D}((-A)^\delta)$. It is easy to see that $F_\lambda\in {\mathscr L}({\mathscr D}((-A)^\delta), H)$ (thus, $F_\lambda\in {\mathscr L}({\mathscr D}(A), H))$. By Corollary 6.11, p. 73, Pazy \cite{ap83} there is a constant $C>0$ such that for every $\rho>0$,
\begin{equation}
\label{04/11/13(1)}
\|F_\lambda x\|_H\le C(\rho^\delta\|x\|_H + \rho^{\delta-1}\|Ax\|_H),\hskip20pt \forall\,x\in {\mathscr D}(A).
\end{equation}
This implies that $F_\lambda$ is $A$-bounded with $A$-bound $0$ (see Pazy \cite{ap83}).
Hence,  $A+F_\lambda$ generates a $C_0$-semigroup on $H$. In particular, $\mu\in \rho(A + F_\lambda)$ for Re$\, \mu$ large enough.

Moreover, $\mu\in \rho(A)$ if Re$\,\mu$ is large enough and we have for any fixed $\lambda\in {\mathbb C}$ that
\[
(\mu I -A-F_\lambda) = (I- F_\lambda R(\mu, A))(\mu I -A).\]
Hence,  if we can show that $\|F_\lambda  R(\mu, A)\|_{{\mathscr L}(H)}<1$,
Re$\,\mu\ge R$, for some $R>0$, then it is true that  
\begin{equation}
\label{19/07/2013(1)}
(\mu I -A-F_\lambda)^{-1}= R(\mu, A)[I -F_\lambda  R(\mu, A)]^{-1}\hskip 10pt \hbox{for any}\,\,\, {Re}\,\mu\ge \delta_0. 
\end{equation}
To this end, we recall that the analyticity of $e^{tA}$ implies that there exists a constant $M>0$ such that 
\[
\|A R(\mu, A)\|_{{\mathscr L}(H)}\le M\hskip 15pt \hbox{for large }\hskip 15pt Re\,\mu.\]
Let $0<\varepsilon <1$ and $a<\varepsilon/2M$. It follows by (\ref{04/11/13(1)}) that there exists $b>0$ such that 
\[
\|F_\lambda R(\mu, A)x\|_H \le a \|A R(\mu, A)x\|_H + b\|R(\mu, A)x\|_H,\hskip 15pt \forall x\in H.\]
Now choose Re$\, \mu$  large enough so that 
\[
b\|R(\mu, A)\|_{{\mathscr L}(H)}<{\varepsilon}/{2}.\]
Then it is easy to obtain that 
\[
\|F_\lambda R(\mu, A)\|_{{\mathscr L}(H)}\le \varepsilon <1 \hskip 15pt \hbox{for large}\,\,\,\,Re\,\mu\in {\mathbb R}.\]
Since $R(\mu, A)$ is compact on $H$, so is $(\mu I -A-F_\lambda)^{-1}$ according to (\ref{19/07/2013(1)}). Last, let $\mu=\lambda\in {\mathbb C}$, then $(\lambda I -A-F_\lambda)^{-1}$ with large Re$\,\lambda$ (then, for all $\lambda\in \rho(\Delta)$) are compact, and the desired result is concluded.
\end{proof}

 Let $\lambda\in {\mathbb C}$ and we introduce mappings ${E}_\lambda:\, Z\to {\cal X}$, ${J}_\lambda:\, {\cal X}\to {\cal X}$  and ${H}_\lambda:\, W\times L^2([-r, 0]; H)\to H$, respectively, by 
 \begin{equation}
\label{30/06/06(10)}
\begin{cases}
({E}_\lambda x)_0 =x,\\
({E}_\lambda x)_1(\theta) =e^{\lambda\theta}x,\hskip 10pt \theta\in [-r, 0],\hskip 10pt \hbox{for}\hskip 10pt x\in Z,
\end{cases}
\end{equation} 
 \begin{equation}
\label{30/06/06(11)}
\begin{cases}
({J}_\lambda \Phi)_0 =0,\\
({J}_\lambda \Phi)_1(\theta) =\displaystyle\int^0_\theta e^{\lambda(\theta-\tau)}\phi_1(\tau)d\tau,\hskip 10pt \theta\in [-r, 0],\hskip 10pt \hbox{for}\hskip 10pt \Phi\in {\cal X},
\end{cases}
\end{equation} 
 \begin{equation}
\label{30/06/06(13)}
({H}_\lambda \Phi) =\phi_0 + \int^0_{-r} e^{\lambda\tau}\phi_1(\tau)d\tau,\hskip 10pt \hbox{for}\hskip 10pt \Phi\in W\times L^2([-r, 0]; H).
\end{equation} 
 It is immediate to know  that all the four operators ${E}_\lambda$, ${J}_\lambda$, ${K}_\lambda$ and ${H}_\lambda$ are linear and bounded. 

\begin{lemma}
\label{20/11/13(60)}
Suppose that $A$ generates a compact semigroup $e^{tA}$ for $t>0$. Under the same conditions as in Lemma \ref{19/11/13(1)}, it is true that $R(\lambda, {\cal A})e^{r{\cal A}}$ is compact for some $\lambda\in \rho({\cal A})$.
\end{lemma}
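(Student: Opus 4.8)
The plan is to pass to the commuting product $R(\lambda,{\cal A})e^{r{\cal A}}=e^{r{\cal A}}R(\lambda,{\cal A})$ (valid for any $\lambda\in\rho({\cal A})$, and $\rho({\cal A})\neq\emptyset$ since $e^{t{\cal A}}$ is a $C_0$–semigroup) and to observe that $R(\lambda,{\cal A})$ maps bounded subsets of ${\cal X}$ into bounded subsets of ${\mathscr D}({\cal A})$ equipped with the graph norm: indeed, from ${\cal A}R(\lambda,{\cal A})=\lambda R(\lambda,{\cal A})-I$ and the description (\ref{11/08/2013(20)})--(\ref{20/08/2013(20)}) of ${\cal A}$, controlling $\|\cdot\|_{\cal X}+\|{\cal A}\,\cdot\|_{\cal X}$ controls $\|\psi_0\|_{{\mathscr D}(A)}$ and $\|\psi_1\|_{W^{1,2}([-r,0];Z)}$. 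Hence it suffices to show that $e^{r{\cal A}}$ carries each bounded subset of ${\mathscr D}({\cal A})$ into a relatively compact subset of ${\cal X}$. Two compactness facts will drive the argument: $e^{sA}$ is compact for every $s>0$ (the compact–semigroup hypothesis), and $A_1\colon Z\to H$ is \emph{compact}. The latter holds because a compact semigroup forces $A$ to have compact resolvent, so that $Z={\mathscr D}(A)\hookrightarrow{\mathscr D}((-A)^\delta)$ is compact (here $\delta<1$ is exactly the hypothesis of Lemma \ref{19/11/13(1)}), and $A_1\in{\mathscr L}({\mathscr D}((-A)^\delta),H)$ factors through this compact embedding.

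Fix $\Psi=(\psi_0,\psi_1)$ in such a bounded set, so $\psi_0\in{\mathscr D}(A)$, $\psi_1$ is bounded in $W^{1,2}([-r,0];Z)\hookrightarrow C([-r,0];Z)$, and $\psi_0=\psi_1(0)$. Since the present system is of purely discrete type ($A_2=0$), the fundamental solution satisfies $G(s)=e^{sA}$ for $s\in[0,r]$, and (\ref{17/05/06(3)}) yields, for the segment $y_r(\Psi)(\tau)=y(r+\tau;\Psi)$, $\tau\in[-r,0]$,
\[
y(r+\tau;\Psi)=e^{(r+\tau)A}\psi_0+\int_{-r}^{\tau}e^{(\tau-\sigma)A}A_1\psi_1(\sigma)\,d\sigma .
\]
The convolution carries the dangerous factor $(-A)e^{sA}$ once one tries to land in $Z$. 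The decisive move is to integrate by parts in $\sigma$, using $\partial_\sigma e^{(\tau-\sigma)A}=-Ae^{(\tau-\sigma)A}$ and the now-available $W^{1,2}$–regularity of $\psi_1$:
\[
\int_{-r}^{\tau}e^{(\tau-\sigma)A}A_1\psi_1(\sigma)\,d\sigma=-A^{-1}A_1\psi_1(\tau)+A^{-1}e^{(r+\tau)A}A_1\psi_1(-r)+\int_{-r}^{\tau}A^{-1}e^{(\tau-\sigma)A}A_1\frac{d\psi_1}{d\sigma}(\sigma)\,d\sigma,
\]
where $A^{-1}\in{\mathscr L}(H,Z)$ because $A$ is exponentially stable. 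Every semigroup now appears as $e^{sA}$ or $A^{-1}e^{sA}$ with $s\in[0,r]$, both uniformly bounded into $Z$; the singularity at $s=0$ has been eliminated.

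It then remains to prove that, as $\Psi$ runs through the bounded set, each resulting term describes a relatively compact family in $L^2([-r,0];Z)$, which I would establish by Arzel\`a--Ascoli in $C([-r,0];Z)$. For $\tau\mapsto e^{(r+\tau)A}\psi_0$, pointwise relative compactness in $Z$ for $r+\tau>0$ comes from compactness of $e^{sA}$, while the slab $\tau\in[-r,-r+\eta]$ contributes $O(\eta^{1/2})$ in $L^2([-r,0];Z)$ uniformly, absorbing the single point $s=0$. In the three terms produced by the integration by parts, pointwise relative compactness in $Z$ is furnished by the compact operator $A^{-1}A_1$ (respectively $A^{-1}e^{sA}A_1$) acting on the bounded values of $\psi_1$ and $d\psi_1/d\sigma$, and the required equicontinuity in $\tau$ follows from the uniform $W^{1,2}([-r,0];Z)$–bound on $\psi_1$. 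An analogous but simpler computation handles the first coordinate $y(r;\Psi)=e^{rA}\psi_0+\int_{-r}^{0}e^{-\sigma A}A_1\psi_1(\sigma)\,d\sigma$, whose relative compactness in $W$ uses compactness of $e^{rA}$ and of $A_1$ together with the integrability of $\|e^{-\sigma A}\|_{{\mathscr L}(H,W)}$ near $\sigma=0$ (so no integration by parts is needed there). The main obstacle is precisely the non-integrable blow-up of $(-A)e^{sA}$ as $s\to0^{+}$: the bare regularity $y\in L^2([0,r];Z)$ from Theorem \ref{29/05/2013(1)} does not by itself give $L^2([-r,0];Z)$–compactness of the segment, and it is the prior application of the resolvent—lifting the history into $W^{1,2}([-r,0];Z)$ and thereby enabling the integration by parts—together with the compactness (not mere boundedness) of $A_1\colon Z\to H$ guaranteed by $\delta<1$, that overcomes it.
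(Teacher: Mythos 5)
Your argument is correct in outline but takes a genuinely different route from the paper. The paper never returns to the equation itself: it works through the structural resolvent formula $R(\lambda, {\cal A}) = E_\lambda \Delta(\lambda)^{-1}H_\lambda {\cal S} + J_\lambda$, so that the \emph{conclusion} of Lemma \ref{19/11/13(1)} (compactness of $\Delta(\lambda)^{-1}$) delivers compactness of the first component $\pi_0 R(\lambda,{\cal A})e^{r{\cal A}}$ outright; for the history component it uses the semigroup identity $[\pi_1 R(\lambda,{\cal A})e^{r{\cal A}}\Phi](\theta)=\pi_0 R(\lambda,{\cal A})e^{(r+\theta){\cal A}}\Phi$ to reduce pointwise relative compactness to the first-component case, obtains equicontinuity from $\|{\cal A}R(\lambda,{\cal A})\|\le M$ via a H\"older-$1/2$ estimate, and closes with Ascoli--Arzel\`a. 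You, by contrast, use only the \emph{hypotheses} of Lemma \ref{19/11/13(1)}, never its conclusion: after the same opening move in disguise (resolvent smoothing, which you correctly unpack via (\ref{20/08/2013(20)}) into a bound on $\|\psi_0\|_{{\mathscr D}(A)}$ and $\|\psi_1\|_{W^{1,2}([-r,0];Z)}$, using $W^{1,2}\hookrightarrow C$ to control $A_1\psi_1(-r)$), you write the solution explicitly over one delay length, where indeed $G(s)=e^{sA}$ on $[0,r]$, integrate by parts to eliminate the non-integrable $Ae^{sA}$ singularity, and feed in compactness of $e^{sA}$ for $s>0$ and of $A_1\colon Z\to H$ (a correct consequence of compact resolvent plus $\delta<1$, via the compact embedding ${\mathscr D}(A)\hookrightarrow{\mathscr D}((-A)^\delta)$). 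Your route is more elementary and makes transparent exactly where compactness enters; the paper's is shorter once Lemma \ref{19/11/13(1)} is in hand and never needs to unpack the convolution, so it is less tied to the specific discrete-delay structure.

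Two spots in your sketch need tightening, though neither is a genuine gap. First, equicontinuity of the boundary term $\tau\mapsto A^{-1}e^{(r+\tau)A}A_1\psi_1(-r)$ does \emph{not} follow from the $W^{1,2}$ bound on $\psi_1$ as you assert: its $Z$-valued derivative $e^{(r+\tau)A}A_1\psi_1(-r)$ blows up like $(r+\tau)^{-1}$ near $\tau=-r$. Use instead the relative compactness of $\{A_1\psi_1(-r)\}$ in $H$ (here the compactness of $A_1$ earns its keep again) together with strong continuity of $s\mapsto A^{-1}e^{sA}$ from $H$ to $Z$, or reuse your own slab-absorption device. Second, for the convolution against $d\psi_1/d\sigma$ the values are only $L^2$ in $\sigma$, not bounded, so ``compact operator acting on bounded values'' is imprecise; the clean fix is to show that $g\mapsto\int_{-r}^{\tau}A^{-1}e^{(\tau-\sigma)A}A_1 g(\sigma)\,d\sigma$ is compact from $L^2([-r,0];Z)$ to $Z$ by splitting off $\sigma\in[\tau-\eta,\tau]$ (an $O(\eta^{1/2})\|g\|$ tail, by Cauchy--Schwarz) and factoring the remainder through the compact operator $A^{-1}e^{(\eta/2)A}\colon H\to Z$. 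With these repairs your argument is complete.
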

\begin{proof} By definition, it is not difficult to get that 
\begin{equation}
\label{19/11/13(10)}
R(\lambda, {\cal A}) = E_\lambda \Delta(\lambda)^{-1}H_\lambda {\cal S} + J_\lambda,\hskip 15pt \lambda\in \rho({\cal A}),
\end{equation}
which immediately implies that $\rho(\Delta)=\rho({\cal A})$. Here ${\cal S}$ is the structure operator given in (\ref{21/11/13(95)}).
Let $\pi_0:\, W\times L^2([-r, 0]; {\mathscr D}(A))\to W$ and $\pi_1:\, W\times L^2([-r, 0]; {\mathscr D}(A))\to L^2([-r, 0]; {\mathscr D}(A))$ denote the canonical projections on $W$ and $L^2([-r, 0]; {\mathscr D}(A))$, respectively. 

Since $\Delta(\lambda)^{-1}$ is compact for all $\lambda\in \rho(\Delta)$ by virtue of Lemma \ref{19/11/13(1)}, we have by using (\ref{19/11/13(10)}) and the compactness of $\Delta(\lambda)^{-1}$ to get  that 
\begin{equation}
\label{20/11/13(1)}
\pi_0 R(\lambda, {\cal A})e^{r{\cal A}}=\pi_0 E_\lambda \Delta(\lambda)^{-1} H_\lambda {\cal S} e^{r{\cal A}}
\end{equation}
 is compact. 

Now we restrict our attention to $\pi_1 R(\lambda, {\cal A})e^{r{\cal A}}:\, W\times L^2([-r, 0]; {\mathscr D}(A))\to L^2([-r, 0]; {\mathscr D}(A))$ for any fixed $\lambda\in \rho({\cal A})$.  Note that 
\[
\frac{d}{d\theta}\pi_1 R(\lambda, {\cal A}) e^{r{\cal A}} = \pi_1 {\cal A} R(\lambda, {\cal A})e^{r{\cal A}},\]
and $\|e^{r{\cal A}}\|\le M$, $\|{\cal A}R(\lambda, {\cal A})\|\le M$ for some $M>0$.
Hence,  for any $\Phi=(\phi_0, \phi_1)\in {\cal X}$ we can deduce by using H\"older's inequality  that for all $\theta_1,\,\theta_2\in [-r, 0]$,
\[
\begin{split}
\|[\pi_1 R(\lambda, {\cal A}) e^{r{\cal A}}\Phi](\theta_2)& -[\pi_1 R(\lambda, {\cal A}) e^{r{\cal A}}\Phi](\theta_1)\|_{{\mathscr D}(A)}\\
&= \Big\|\int^{\theta_2}_{\theta_1}\Big[\frac{d}{d\theta}\pi_1 R(\lambda, {\cal A})e^{r{\cal A}}\Phi\Big](\theta)d\theta\Big\|_{{\mathscr D}(A)}\\
&= \Big\|\int^{\theta_2}_{\theta_1}[\pi_1 {\cal A}R(\lambda, {\cal A})e^{r{\cal A}}\Phi](\theta)d\theta\Big\|_{{\mathscr D}(A)}\\
&\le \int^{\theta_2}_{\theta_1}\|[\pi_1{\cal A} R(\lambda, {\cal A})e^{r{\cal A}}\Phi](\theta)\|_{{\mathscr D}(A)} d\theta\\
&\le (\theta_2-\theta_1)^{1/2}\Big\|\pi_1{\cal A}R(\lambda, {\cal A})e^{r{\cal A}}\Phi\Big\|_{L^2([-r, 0]; {{\mathscr D}(A)})}\\
&\le M|\theta_2-\theta_1|^{1/2}\|\Phi\|_{\cal X}.   
\end{split}
\] 
This implies that the family 
\begin{equation}
\label{20/11/13(15)}
\Sigma := \Big\{\pi_1 R(\lambda, {\cal A})e^{r{\cal A}}\Phi:\,\,\Phi\in {\cal X}, \|\Phi\|_{\cal X}\le 1\Big\}\subset C([-r, 0]; {{\mathscr D}(A)})
\end{equation}
is equi-continuous. On the other hand, we have for any $\theta\in [-r, 0]$ that 
\begin{equation}
\label{20/11/13(31)}
\begin{split}
[\pi_1R(\lambda, {\cal A})e^{r{\cal A}}\Phi](\theta) &= [\pi_1e^{r{\cal A}}R(\lambda, {\cal A})\Phi](\theta)\\
&= [\pi_1 e^{(r+\theta){\cal A}}R(\lambda, {\cal A})\Phi](0)\\
&= [\pi_1R(\lambda, {\cal A})e^{(r+\theta){\cal A}}\Phi](0)\\
&= \pi_0 R(\lambda, {\cal A})e^{(r+\theta){\cal A}}\Phi.
\end{split}
\end{equation}
By virtue of (\ref{19/11/13(10)}), (\ref{20/11/13(31)}) and the fact that $\Delta(\lambda)^{-1}$ is compact, we get that $\Sigma$ in (\ref{20/11/13(15)}) is pointwise relatively compact. Hence, we find by virtue of Ascoli-Arzel\`a theorem that $\Sigma$ is relatively compact in $C([-r, 0]; {{\mathscr D}(A)})$ and further relatively compact in $L^2([-r, 0]; {{\mathscr D}(A)})$. From this we conclude that $\pi_1 R(\lambda, {\cal A})e^{r{\cal A}}$ is compact which, in addition to (\ref{20/11/13(1)}), implies the compactness of $R(\lambda, {\cal A})e^{r{\cal A}}$. The proof is thus complete. 
\end{proof}

\begin{theorem}
Under the same conditions as in Lemma \ref{19/11/13(1)}, we have that the semigroup $e^{t{\cal A}}$, $t\ge 0$, is compact for all $t>r$ provided that $A$ generates a compact semigroup $e^{tA}$ for $t>0$.
\end{theorem}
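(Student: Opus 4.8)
The plan is to fix $t>r$ and exhibit $e^{t{\cal A}}$ as a uniform-operator-norm limit of compact operators, so that its compactness follows from the closedness of the compact operators in ${\mathscr L}({\cal X})$. First I would promote Lemma \ref{20/11/13(60)} from the single instant $t=r$ to the whole ray $t\ge r$: because $R(\lambda,{\cal A})$ commutes with the semigroup, for every $t\ge r$ we may write
\[
R(\lambda,{\cal A})e^{t{\cal A}}=e^{(t-r){\cal A}}\bigl(R(\lambda,{\cal A})e^{r{\cal A}}\bigr),
\]
which is the product of the bounded operator $e^{(t-r){\cal A}}$ with the compact operator furnished by Lemma \ref{20/11/13(60)}; hence $R(\lambda,{\cal A})e^{t{\cal A}}$ is compact for each $t\ge r$, and the resolvent identity extends this to every $\lambda\in\rho({\cal A})$.

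The decisive ingredient, which I expect to be the main obstacle, is the continuity of $t\mapsto e^{t{\cal A}}$ in the uniform operator topology for $t>r$. This is exactly the property that fails in the generic unbounded-delay setting noted before Subsection 5.2, so it is here that the strengthened hypothesis $A_1\in{\mathscr L}({\mathscr D}((-A)^\delta),H)$ with $\delta\in[0,1)$ must be used in an essential way. Concretely, I would invoke the estimate (\ref{04/11/13(1)}) of Lemma \ref{19/11/13(1)}, which says the delay operator is $A$-bounded with $A$-bound zero, together with the analyticity (and hence immediate norm continuity) of $e^{tA}$; since for $t>r$ the history part of the state is a shifted piece of an already regularised trajectory, these facts should combine to yield $\|e^{(t+h){\cal A}}-e^{t{\cal A}}\|\to 0$ as $h\downarrow 0$ for each fixed $t>r$.

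Granting this eventual norm continuity, the conclusion follows from a Post--Widder-type approximation. From $\lambda R(\lambda,{\cal A})=\int_0^\infty\lambda e^{-\lambda s}e^{s{\cal A}}\,ds$ and $\int_0^\infty\lambda e^{-\lambda s}\,ds=1$ one obtains, for $\lambda$ larger than the growth bound $\omega$ of the semigroup,
\[
\lambda R(\lambda,{\cal A})e^{t{\cal A}}-e^{t{\cal A}}=\int_0^\infty\lambda e^{-\lambda s}\bigl(e^{(t+s){\cal A}}-e^{t{\cal A}}\bigr)\,ds.
\]
Splitting the integral at a small $\delta>0$, I would bound the near part by the modulus of norm continuity and the tail by the exponential estimate $\|e^{s{\cal A}}\|\le Me^{\omega s}$, and then let $\lambda\to\infty$ to deduce $\|\lambda R(\lambda,{\cal A})e^{t{\cal A}}-e^{t{\cal A}}\|\to 0$. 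Since each $\lambda R(\lambda,{\cal A})e^{t{\cal A}}$ is compact by the first step, $e^{t{\cal A}}$ is a uniform limit of compact operators and therefore compact for every $t>r$, as claimed.
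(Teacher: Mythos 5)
Your overall architecture is the same as the paper's: both proofs rest on exactly two ingredients, namely compactness of $R(\lambda,{\cal A})e^{r{\cal A}}$ (Lemma \ref{20/11/13(60)}) and norm continuity of $e^{t{\cal A}}$ for $t>r$. The only structural difference is that the paper simply invokes the abstract compactness criterion (Lemma II.4.28 of \cite{kern00}), whereas you re-derive it from scratch: the commutation identity $R(\lambda,{\cal A})e^{t{\cal A}}=e^{(t-r){\cal A}}\bigl(R(\lambda,{\cal A})e^{r{\cal A}}\bigr)$ together with the resolvent identity gives compactness of $\lambda R(\lambda,{\cal A})e^{t{\cal A}}$ for all $t\ge r$ and all relevant $\lambda$, and your Abel-mean computation showing $\|\lambda R(\lambda,{\cal A})e^{t{\cal A}}-e^{t{\cal A}}\|\to 0$ as $\lambda\to\infty$ (splitting at a small $\delta$, using the modulus of norm continuity near $s=0$ and the exponential bound on the tail) is precisely the standard proof of that criterion. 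This is correct and buys self-containedness; it adds nothing beyond the citation, but loses nothing either.

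The one genuine soft spot is the step you yourself flag as the main obstacle: eventual norm continuity of $e^{t{\cal A}}$ for $t>r$. There you only assert that the analyticity of $e^{tA}$ and the $A$-bound-zero estimate (\ref{04/11/13(1)}) ``should combine'' to give $\|e^{(t+h){\cal A}}-e^{t{\cal A}}\|\to 0$; this is a plausibility claim, not an argument, and it cannot be waved through, since for general $A_1\in{\mathscr L}({\mathscr D}(A),H)$ the delay semigroup may fail to be eventually norm continuous at all (the paper's own remark citing \cite{jj1991}), so the hypothesis $\delta\in[0,1)$ must enter in an essential, quantitative way. The paper discharges this step differently and more concretely: under the theorem's hypothesis $e^{tA}$ is compact, hence norm continuous for $t>0$, and the paper adapts the argument of Proposition 6.2 of \cite{kl09(2)}, which transfers this norm continuity to $e^{t{\cal A}}$ for $t>r$ through the fundamental-solution representation of the solution semigroup. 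Admittedly the paper, too, leaves the details to that citation, but it anchors the step to a specific proved result, while your version leaves it as a heuristic. If you replace that paragraph by the adaptation of the cited argument (or any complete proof of norm continuity for $t>r$), your proof is complete and essentially equivalent to the paper's.
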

\begin{proof}
It suffices to show that $e^{t{\cal A}}$, $t\ge 0$, is norm continuous for $t>r$ and $R(\lambda, {\cal A})e^{r{\cal A}}$ is compact for some $\lambda\in \rho({\cal A})$ (see \cite{kern00}, Lemma II, 4.28).

Since $e^{tA}$ is  compact (thus, norm continuous) for $t>0$, by a similar argument to Proposition 6.2 in \cite{kl09(2)}, it is possible to show that $e^{t{\cal A}}$ is norm continuous for $t>r$. In addition to Lemma \ref{20/11/13(60)}, it follows that $e^{t{\cal A}}$ is compact for all $t>r$. The proof is complete now.
\end{proof}

\begin{example}\rm
Consider the following stochastic partial differential equation with delay
\begin{equation}
\label{21/11/13(90)}
\begin{cases}
\partial y(x, t)/\partial t = \Delta y(x, t) + \alpha (-\Delta)^{\delta}y(x, t-1) + f(x)dB(t), \,\,\,x\in {\cal O},\,\,\,\, t\ge 0,\\
y(x, t) =0,\,\,\,\,x\in \partial{\cal O},\,\,\,t\ge 0,\\
y(x, t) =\varphi(x, t),\,\,\,\,(x, t)\in {\cal O}\times [-1, 0],
\end{cases}
\end{equation}
where $\Delta$ is the standard Laplacian operator, $\delta\in [0, 1)$, $\alpha\in {\mathbb R}$ and ${\cal O}\subset {\mathbb R}^N$ a bounded open set with smooth boundary. Let $H= L^2({\cal O})$ and  the Dirichlet-Laplacian 
\[
A = \Delta\,\,\hbox{with domain}\,\,{\mathscr D}(A) = \{u\in H^1_0({\cal O}):\, \Delta u\in L^2({\cal O})\}.
\]
It is claimed that  the equation (\ref{21/11/13(90)}) has a unique stationary solution  if 
\[
2|\alpha|<|\lambda_1|^{1-\delta}\]
where $\lambda_1$ is the first eigenvalue of the Dirichlet-Laplacian. 

Indeed, we have 
\[
\begin{cases}
F = \alpha (-\Delta)^\delta\bm{\delta}_{-1},\\
F_\lambda =\alpha e^{-\lambda}(-\Delta)^{\delta}.
\end{cases}
\]
Here we define $\bm{\delta}_{-1}: C([-1, 0]; {\mathscr D}((-\Delta)^\delta))\to H$ by $\bm{\delta}_{-1}(\varphi) =\varphi(-1)$. Since $A$ is a self-adjoint operator on $H$, we can compute for $a\in {\mathbb R}$ that 
\[
\begin{split}
\|F_{ia}R(ia, A)\| &\le \alpha \|(-A)^{\delta} R(ia, A)\|\\
&\le \alpha \|(-A)^{\delta-1}\|\|A R(ia, A)\|\\
&\le \alpha \|(-A)^{\delta-1}\| ( |a|\|R(ia, A)\|+1)\\
&= \alpha \|(-A)^{\delta-1}\|\Big(\frac{|a|}{\sqrt{a^2 + \lambda^2_1}} +1\Big),
\end{split}
\]
which yields immediately that
\[
\sup_{a\in {\mathbb R}}\|F_{ia} R(ia, A)\|\le \frac{2|\alpha|}{|\lambda_1|^{1-\delta}}<1.\]
By virtue of Phragmen-Lindel\"of Theorem (see Conway \cite{JC86}, Theorem VI. 4.1), it follows that 
\begin{equation}
\label{21/11/13(1)}
\sup_{Re\,\lambda\ge 0}\|F_\lambda R(\lambda, A)\|<1.
\end{equation}
The relation (\ref{21/11/13(1)}) ensures the existence of $\Delta(\lambda)^{-1}$ on the halfplane $\{Re\,\lambda\ge 0\}$ which is given by the Neumann series 
\[
\Delta(\lambda)^{-1} = R(\lambda, A)\sum^\infty_{n=0} (F_\lambda R(\lambda, A))^n.\]
Hence, we have $\sup\{Re\,\lambda:\, \lambda \in \sigma({\cal A})\}<0.$
Since the associated semigroup $e^{t{\cal A}}$ is norm continous for $t>r$,  the growth bound of ${\cal A}$ thus satisfies
\[
\inf\{\mu: \,\|e^{t{\cal A}}\|\le Me^{\mu t}\,\hbox{for some}\,\,M>0\}=
  \sup\{Re\,\lambda:\, \lambda \in \sigma({\cal A})\}<0.
\]
That is, the solution semigroup $e^{t{\cal A}}$, $t\ge 0$, is exponentially stable, a fact which assures the existence of a unique a stationary solution of the equation (\ref{21/11/13(90)}).
\end{example}

Now we return to consider the equation (\ref{11/08/2013(30567)}) with $A_1=\alpha A$, $\alpha\in {\mathbb R}$, i.e., 
\begin{equation}
\label{11/08/2013(305)}
\begin{cases}
\displaystyle\frac{dy(t)}{dt} =Ay(t)  + \alpha A y(t-r)+ f \dot B(t),\,\,\,\,t\ge 0,\\
y(0)=\phi_0,\,\,\,\,y_0=\phi_1,\,\,\,\Phi=(\phi_0, \phi_1)\in {\cal X},
\end{cases}
\end{equation}
where $A: {\mathscr D}(A)\subset H\to H$ generates an analytic semigroup $e^{tA}$, $t\ge 0$, on the Hilbert space $H$. On this occasion,  the solution semigroup $e^{t{\cal A}}$, $t\ge 0$, of (\ref{11/08/2013(305)})  is never compact, or even  norm continuous. A direct consequence of this fact is that one cannot use the standard spectral mapping theorem to obtain stationary solutions for  Equation  (\ref{11/08/2013(305)}). 

In the sequel, we will employ a different method by estimating the growth bound through some resolvent estimates. More precisely, we estimate the growth bound by considering the abscissa of uniform boundedness of the resolvent of the generator ${\cal A}$ (cf. \cite{absp05}). 

Suppose that  $B$ is the infinitesimal generator of an arbitrary $C_0$-semigroup on the Hilbert space $H$ and $s(B)$ is defined as the infimum of all $\mu\in {\mathbb R}$ such that $\{\hbox{Re}\,\lambda>\mu\}\subset \rho(B)$ and $\sup_{Re\,\lambda>\mu}\|R(\lambda, B)\|<\infty$, then (see, e.g., \cite{kern00}) 
\[
s(B) = \inf \{\mu\in {\mathbb R}:\, \|e^{tB}\|\le Me^{\mu t}\,\,\hbox{for some}\,\,M>0\}.\] 
Moreover, if the generator $B$ satisfies the conditions of Gearhart-Pr\"uss-Greiner Theorem:
\begin{equation}
\label{02/09/13(11)}
\{\lambda\in {\mathbb C}:\, \hbox{Re}\,\lambda>0\}\subset \rho(B)\hskip 10pt \hbox{and}\hskip 10pt \sup_{Re\,\lambda>0}\|R(\lambda, B)\|<\infty,
\end{equation}
then $s(B)<0$ and the semigroup $e^{tB}$, $t\ge 0$, is thus exponentially stable (cf.  \cite{kern00}).

We will consider the spectrum $\sigma({\cal A})$ and the resolvent $R(\lambda, {\cal A})$ of the solution semigroup $e^{t{\cal A}}$, $t\ge 0$, of the equation (\ref{11/08/2013(305)}). Recall that the characteristic operator $\Delta(\lambda): {\mathscr D}(A)\to H$ for   (\ref{11/08/2013(305)}) is given on this occasion by 
\[
\Delta(\lambda)x = \lambda x - n(\lambda) Ax,\hskip 20pt x\in {\mathscr D}(A),\]
where $n(\lambda)= 1 + \alpha e^{-\lambda r}$, $\lambda\in {\mathbb C}.$ 

\begin{proposition}
\label{30/08/13(5)}
For the equation (\ref{11/08/2013(305)}), assume that $\sigma(A)\subset (-\infty, -c_0]$ for some $c_0>0$ and $|\alpha|<1$, then it is valid that
\[
\sigma({\cal A})\subset \{\lambda\in {\mathbb C}:\, Re\,\lambda<0\}.\]
\end{proposition}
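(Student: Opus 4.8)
The plan is to reduce the spectral problem for ${\cal A}$ to the scalar characteristic function by means of Proposition \ref{22/08/13(2)}. Since in (\ref{11/08/2013(305)}) we have $\beta\equiv 0$ and the delayed operator is $\alpha A$, the function in (\ref{23/08/13(21)}) reduces to $n(\lambda)=1+\alpha e^{-\lambda r}$; Proposition \ref{22/08/13(2)}, together with the identity $\Gamma_1=\Gamma_C\cup\Gamma_R\cup\Gamma_P$, then gives $\sigma({\cal A})\subset \Gamma_0\cup\Gamma_1$, with $\Gamma_0$ and $\Gamma_1$ as in (\ref{01/09/13(10)}). Before treating these two sets I would first rule out $\lambda=0$: because $|\alpha|<1$ we have $n(0)=1+\alpha\not=0$, so the exceptional eigenvalue $0$ in Proposition \ref{22/08/13(2)}(iii) does not occur; moreover $0\cdot n(0)^{-1}=0\notin\sigma(A)\subset(-\infty,-c_0]$, so $0\notin\Gamma_1$, while $\lambda=0$ is excluded from $\Gamma_0$ by definition. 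It therefore suffices to show $\mathrm{Re}\,\lambda<0$ for every $\lambda\in\Gamma_0\cup\Gamma_1$.

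For $\lambda\in\Gamma_0$ the estimate is immediate: $n(\lambda)=0$ means $e^{-\lambda r}=-1/\alpha$, and taking moduli gives $e^{-r\,\mathrm{Re}\,\lambda}=1/|\alpha|>1$, which forces $\mathrm{Re}\,\lambda<0$. The analytic core is the case $\lambda\in\Gamma_1$. Here $\mu:=\lambda\,n(\lambda)^{-1}\in\sigma(A)\subset(-\infty,-c_0]$ is real with $\mu\le-c_0<0$, and the relation $\lambda=\mu\,n(\lambda)=\mu\big(1+\alpha e^{-\lambda r}\big)$ holds. Writing $\lambda=x+iy$ and equating real parts yields
\[
\frac{x}{\mu}=1+\alpha e^{-xr}\cos(yr).
\]
Arguing by contradiction, suppose $x=\mathrm{Re}\,\lambda\ge 0$. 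Then the left-hand side is $\le 0$ because $\mu<0$, whereas $e^{-xr}\le 1$ and $|\alpha|<1$ make the right-hand side at least $1-|\alpha|>0$, a contradiction. Hence $\mathrm{Re}\,\lambda<0$ in this case as well, and combining the two cases gives the assertion.

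I expect the genuine difficulty here to be bookkeeping rather than analysis: one must check that Proposition \ref{22/08/13(2)} is applied with the correct conventions so that no component of $\sigma({\cal A})=\sigma_P({\cal A})\cup\sigma_R({\cal A})\cup\sigma_C({\cal A})$ escapes the union $\Gamma_0\cup\Gamma_1$, and that the borderline point $\lambda=0$ is legitimately removed under the standing hypothesis $|\alpha|<1$. Once this reduction is secured, both estimates are elementary, each resting on the single observation that $e^{-xr}\le 1$ whenever $x\ge 0$, combined with $|\alpha|<1$. Finally I would stress what this proposition does \emph{not} give: since here $e^{t{\cal A}}$ is neither compact nor norm continuous, locating $\sigma({\cal A})$ in the open left half-plane does not by itself imply exponential stability, and the stronger conclusion will require the uniform resolvent bound in the Gearhart--Pr\"uss--Greiner criterion (\ref{02/09/13(11)}).
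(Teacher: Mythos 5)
Your proof is correct and follows essentially the same route as the paper: reduce $\sigma({\cal A})$ to $\Gamma_0\cup\Gamma_1$ via Proposition \ref{22/08/13(2)} (the paper's ``$\Gamma_0\cap\Gamma_1$'' is a typo for the union, as you implicitly recognize), then rule out $\mathrm{Re}\,\lambda\ge 0$ by elementary real-part estimates using $e^{-rx}\le 1$ and $|\alpha|<1$. Your only departures are cosmetic improvements: for $\Gamma_0$ you take moduli in $e^{-\lambda r}=-1/\alpha$ instead of the real-part equation, and for $\Gamma_1$ you divide by $\mu\neq 0$ rather than by $\mathrm{Re}\,n(\lambda)$, which avoids the paper's implicit assumption that this denominator is nonzero.
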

 \begin{proof}
Since $|\alpha|<1$, it follows that $n(0)\not= 0$ and by Proposition \ref{22/08/13(2)}, $\sigma({\cal A})\subset \Gamma_0\cap \Gamma_1$.  We first assume $\lambda\in \Gamma_1$, then there is a $\gamma\in \sigma(A)$ such that $\lambda/n(\lambda)=\gamma<0$. Let us denote $\lambda=x+iy\in {\mathbb C}$. Then the real part of the equation yields that 
\begin{equation}
\label{30/08/13(1)}
\frac{x}{1+\alpha e^{-rx}\cos ry}=\gamma<0.
\end{equation}
If $x\ge 0$, then it follows by assumption that 
\[
|\alpha e^{-rx}\cos ry|\le |\alpha|<1.\]
This implies that $1+\alpha e^{-r x}\cos ry>0$, a fact which
contradicts with (\ref{30/08/13(1)}), thus $x<0$.

Now let $\lambda\in \Gamma_0$, then (\ref{23/08/13(21)}) and (\ref{01/09/13(10)})
 imply that 
\begin{equation}
 \label{30/08/13(2)}
1+\alpha e^{-rx}\cos ry=0.
\end{equation}
If $x\ge 0$, we have from  (\ref{30/08/13(2)}) that 
\[
1 = |\alpha e^{-rx}\cos ry|\le |\alpha|<1,\]
which is a contradiction again. Combining the above arguments and using Proposition 
\ref{22/08/13(2)}, we obtain the desired results. The proof is complete now.
\end{proof}

Now we are in a position to obtain the stationary solutions of the equation (\ref{11/08/2013(305)}). To this end, we first present a useful lemma.
\begin{lemma}
\label{30/08/13(11)} If there exists a constant $C>0$ such that  for any $\lambda\in \{\lambda\in {\mathbb C}:\, Re\,\lambda>0\}$, 
\begin{equation}
\label{29/08/13(1)}
\|x\|_{{\mathscr D}(A)}\le C\|\Delta(\lambda)x\|_{H}\hskip 15pt \hbox{for each}\hskip 15pt x\in {\mathscr D}(A),
\end{equation}
then there exists a constant $M>0$ such that 
\[
\|\Phi\|_{\cal X} \le M\|(\lambda I - {\cal A})\Phi\|_{\cal X}\hskip 15pt \hbox{for each }\hskip 15pt \Phi\in {\mathscr D}({\cal A}).\]
\end{lemma}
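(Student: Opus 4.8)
The plan is to reconstruct $\Phi=(\phi_0,\phi_1)$ from $\Psi:=(\lambda I-{\cal A})\Phi=(\psi_0,\psi_1)$ by means of the explicit resolvent-type relations of Proposition \ref{14/08/2013(70)}, and then to estimate each component of $\Phi$ in the ${\cal X}$-norm by $\|\Psi\|_{\cal X}$ with a constant that does \emph{not} depend on $\lambda$ in the open right half-plane. For the equation (\ref{11/08/2013(305)}) we have $A_1=A$ and $A_2=0$ (so $\beta\equiv 0$), whence (\ref{28/07/2013(1)}) collapses to
\[
\Delta(\lambda)\phi_1(0)=\alpha\int^0_{-r}e^{\lambda(-r-\tau)}A\psi_1(\tau)\,d\tau+\psi_0,
\]
while (\ref{28/07/2013(5)}) reads $\phi_1(\theta)=e^{\lambda\theta}\phi_1(0)+\int^0_\theta e^{\lambda(\theta-\tau)}\psi_1(\tau)\,d\tau$ for $\theta\in[-r,0]$.

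First I would bound $\phi_0=\phi_1(0)$. Applying hypothesis (\ref{29/08/13(1)}) with $x=\phi_1(0)$ gives $\|\phi_1(0)\|_{{\mathscr D}(A)}\le C\|\Delta(\lambda)\phi_1(0)\|_H$. The decisive observation is that for $Re\,\lambda>0$ and $\tau\in[-r,0]$ one has $-r-\tau\le 0$, so $|e^{\lambda(-r-\tau)}|\le 1$; hence by Cauchy--Schwarz and the graph-norm bound $\|A\psi_1(\tau)\|_H\le\|\psi_1(\tau)\|_{{\mathscr D}(A)}$,
\[
\Big\|\alpha\int^0_{-r}e^{\lambda(-r-\tau)}A\psi_1(\tau)\,d\tau\Big\|_H\le|\alpha|\,r^{1/2}\|\psi_1\|_{L^2_r}.
\]
Combining this with the continuous embedding $W\hookrightarrow H$ to control $\|\psi_0\|_H\le c\|\psi_0\|_W$ yields $\|\phi_1(0)\|_{{\mathscr D}(A)}\le C_1\|\Psi\|_{\cal X}$ uniformly in $\lambda$, and consequently $\|\phi_0\|_W=\|\phi_1(0)\|_W\le C_2\|\Psi\|_{\cal X}$ since $Z={\mathscr D}(A)\hookrightarrow W$.

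Next I would estimate $\|\phi_1\|_{L^2_r}$. Using (\ref{28/07/2013(5)}) together with $|e^{\lambda\theta}|\le 1$ and $|e^{\lambda(\theta-\tau)}|\le 1$ for $Re\,\lambda>0$, $\theta\in[-r,0]$, $\tau\in[\theta,0]$ (again because the exponents are nonpositive), one obtains the pointwise bound $\|\phi_1(\theta)\|_Z\le\|\phi_1(0)\|_Z+r^{1/2}\|\psi_1\|_{L^2_r}$. Integrating over $[-r,0]$ gives $\|\phi_1\|_{L^2_r}\le r^{1/2}\big(\|\phi_1(0)\|_Z+r^{1/2}\|\psi_1\|_{L^2_r}\big)$, and since $\|\phi_1(0)\|_Z=\|\phi_1(0)\|_{{\mathscr D}(A)}$ was already bounded in the previous step, this produces $\|\phi_1\|_{L^2_r}\le C_3\|\Psi\|_{\cal X}$. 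Adding the two estimates yields $\|\Phi\|_{\cal X}\le M\|(\lambda I-{\cal A})\Phi\|_{\cal X}$ with $M=C_2+C_3$ independent of $\lambda$.

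The main obstacle is precisely the uniformity of $M$ over the whole half-plane $\{Re\,\lambda>0\}$, since it is this uniformity---not mere invertibility at each $\lambda$---that later feeds the Gearhart--Pr\"uss--Greiner criterion (\ref{02/09/13(11)}). Two ingredients secure it: (i) the hypothesis (\ref{29/08/13(1)}), whose constant $C$ is already uniform; and (ii) the sign structure of the exponents ($\theta\le 0$, $\theta-\tau\le 0$, $-r-\tau\le 0$ on the relevant domains), which keeps every exponential weight bounded by $1$ regardless of how large $|\lambda|$ is or how close $Re\,\lambda$ lies to $0$. One must verify that no step secretly introduces a factor of $|\lambda|$ or $(Re\,\lambda)^{-1}$; the computation above avoids this because each $\lambda$-dependent quantity is an exponential with nonpositive real exponent, so all growth in $\lambda$ is suppressed.
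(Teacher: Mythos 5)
Your proof is correct and follows essentially the same route as the paper's: both reconstruct $\Phi$ from $\Psi=(\lambda I-{\cal A})\Phi$ via the resolvent relations (\ref{28/07/2013(5)}) and (\ref{28/07/2013(1)}) of Proposition \ref{14/08/2013(70)} specialized to $A_1=A$, $\beta\equiv 0$, bound all exponential weights by $1$ using $\mathrm{Re}\,\lambda>0$ together with the nonpositive exponents, apply hypothesis (\ref{29/08/13(1)}) to control $\|\phi_1(0)\|_{{\mathscr D}(A)}$, and use the embeddings ${\mathscr D}(A)\hookrightarrow W\hookrightarrow H$ to assemble the ${\cal X}$-norm estimate with a constant uniform in $\lambda$. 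The only differences are cosmetic (you work with norms directly where the paper estimates squared norms), and your explicit remark on why no factor of $|\lambda|$ or $(\mathrm{Re}\,\lambda)^{-1}$ can appear is a faithful articulation of the point the paper's computation relies on implicitly.
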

\begin{proof}
First observe that for arbitrary $x\in {\mathscr D}(A)$ and $y\in L^2([-r, 0]; {\mathscr D}(A))$, the function 
\[
u(\theta) = e^{\lambda\theta}x + \int^0_{\theta}e^{\lambda(\theta-\tau)}y(\tau)d\tau,\hskip 20pt \theta\in [-r, 0],\]
satisfies that
\begin{equation}
\label{29/08/13(11)}
\begin{split}
\|u\|^2_{L^2([-r, 0]; {\mathscr D}(A))} \le &\,\, 2\|e^{\lambda\cdot}x\|_{L^2([-r, 0]; {\mathscr D}(A))}^2 + 2\Big\|\int^0_\cdot e^{\lambda(\cdot-\tau)}y(\tau)d\tau\Big\|^2_{L^2([-r, 0]; {\mathscr D}(A))}\\
\le &\,\, 2r\|x\|^2_{{\mathscr D}(A)} + 2r^2\|y\|^2_{L^2([-r, 0]; {\mathscr D}(A))}.
\end{split}
\end{equation}
For any $\Phi=(\phi_1(0), \phi_1)\in {\mathscr D}({\cal A})$, we set $\Psi=\lambda\Phi -{\cal A}\Phi$ and let $x=\phi_1(0)\in {\mathscr D}(A)$. By virtue of (\ref{28/07/2013(1)}), the inequality (\ref{29/08/13(1)}) implies that
\begin{equation}
\label{29/08/13(12)}
\begin{split}
\|\phi_1(0)\|^2_{{\mathscr D}(A)} \le &\,\, C\|\Delta(\lambda)\phi_1(0)\|^2_{H}\\
=&\,\, C\Big\|\alpha \int^0_{-r}e^{\lambda(-r-\tau)}A\psi_1(\tau)d\tau + \psi_0\Big\|^2_{H}\\
\le &\,\, 2C\{|\alpha|^2 r\|A\|_{{\mathscr L}({\mathscr D}(A), H)}^2\|\psi_1\|^2_{L^2([-r, 0]; {\mathscr D}(A))} + \|\psi_0\|^2_{H}\}.
\end{split}
\end{equation}
Since ${\mathscr D}(A)\hookrightarrow W\hookrightarrow H$, there exists a constant $\gamma>0$ such that 
\begin{equation}
\label{25/10/11(2)}
\|u\|_W \le \gamma \|u\|_{{\mathscr D}(A)},\hskip 15pt \|v\|_{H}\le \gamma \|v\|_W\hskip 15pt \hbox{for any}\hskip 15pt u\in {\mathscr D}(A), \hskip 5pt v\in W.
\end{equation}
Hence,  from (\ref{25/10/11(2)}), (\ref{29/08/13(11)}) and (\ref{29/08/13(12)}) it follows that for arbitrary $\Phi\in {\mathscr D}({\cal A})$, 
\begin{equation}
\label{02/09/13(20)}
\begin{split}
\|\Phi\|^2_{\cal X} & = \|\phi_1(0)\|^2_W + \|\phi_1\|^2_{L^2([-r, 0]; {\mathscr D}(A))}\\
&\le \gamma^2\|\phi_1(0)\|^2_{{\mathscr D}(A)} + 2r\|\phi_1(0)\|^2_{{\mathscr D}(A)} + 2r^2\|\psi_1\|^2_{L^2([-r, 0]; {\mathscr D}(A))}\\
&\le 2C(\gamma^2 +2r)\Big\{|\alpha|^2r \|A\|^2_{{\mathscr L}({\mathscr D}(A), H)}\|\psi_1\|^2_{L^2([-r, 0]; {\mathscr D}(A))} + \|\psi_0\|^2_{H}\Big\} + 2r^2\|\psi_1\|^2_{L^2([-r, 0]; {\mathscr D}(A))},
\end{split}
\end{equation}
which, together with (\ref{25/10/11(2)}) and (\ref{02/09/13(20)}), further implies the existence of a constant $M>0$ such that 
\[
\|\Phi\|^2_{\cal X} \le M\{\|\psi_1\|^2_{L^2([-r, 0]; {\mathscr D}(A))} + \|\psi_0\|^2_W\} = M\|\Psi\|^2_{\cal X} =M\|\lambda\Phi - {\cal A}\Phi\|^2_{\cal X}\]
for any $\Phi\in {\mathscr D}({\cal A})$.
The proof is complete now.
\end{proof}

\begin{proposition}
\label{02/09/13(60)} Suppose that $A$ is a self-adjoint operator on $H$. 
Under the same conditions as in Proposition \ref{30/08/13(5)}, there exists a unique stationary solution of the equation (\ref{11/08/2013(305)}).
\end{proposition}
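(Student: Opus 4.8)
The plan is to prove exponential stability of the solution semigroup $e^{t{\cal A}}$, $t\ge 0$, and then invoke Theorem \ref{16/08/2013(100)} to obtain existence and uniqueness of the stationary solution. Because the text records that this semigroup is neither compact nor even norm continuous, the spectral mapping theorem is unavailable; instead I would verify the hypotheses (\ref{02/09/13(11)}) of the Gearhart--Pr\"uss--Greiner theorem on the Hilbert space ${\cal X}=W\times L^2([-r,0];{\mathscr D}(A))$ (which is Hilbert since $W=W^*$ and both factors are). Proposition \ref{30/08/13(5)} already supplies $\sigma({\cal A})\subset\{\mathrm{Re}\,\lambda<0\}$, hence $\{\mathrm{Re}\,\lambda\ge 0\}\subset\rho({\cal A})$; what remains is the uniform resolvent bound $\sup_{\mathrm{Re}\,\lambda>0}\|R(\lambda,{\cal A})\|<\infty$.

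By Lemma \ref{30/08/13(11)}, that uniform bound reduces to the single estimate (\ref{29/08/13(1)}), namely the existence of $C>0$ with $\|x\|_{{\mathscr D}(A)}\le C\|\Delta(\lambda)x\|_H$ for all $x\in{\mathscr D}(A)$ and all $\lambda$ with $\mathrm{Re}\,\lambda>0$, where $\Delta(\lambda)x=\lambda x-n(\lambda)Ax$ and $n(\lambda)=1+\alpha e^{-\lambda r}$. Establishing this estimate is the crux of the proof, and it is exactly where the newly imposed self-adjointness of $A$ enters. I would exploit it through the spectral theorem: writing $A=\int_{-\infty}^{-c_0}\mu\,dE_\mu$, one has $\|x\|_{{\mathscr D}(A)}^2=\int(1+\mu^2)\,d\|E_\mu x\|^2$ and $\|\Delta(\lambda)x\|_H^2=\int|\lambda-n(\lambda)\mu|^2\,d\|E_\mu x\|^2$, so it suffices to prove the pointwise inequality $1+\mu^2\le C^2|\lambda-n(\lambda)\mu|^2$ uniformly for $\mu\le-c_0$ and $\mathrm{Re}\,\lambda\ge 0$.

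The pointwise inequality follows from a lower bound on $\mathrm{Re}\,n(\lambda)$. Writing $\lambda=x+iy$ with $x\ge 0$, one has $\mathrm{Re}\,n(\lambda)=1+\alpha e^{-rx}\cos(ry)\ge 1-|\alpha|>0$, since $|\alpha|<1$ and $e^{-rx}\le 1$. Taking the real part of $\lambda-n(\lambda)\mu$ and using $\mu\le-c_0<0$,
\[
|\lambda-n(\lambda)\mu|^2\ge\big(x-\mu\,\mathrm{Re}\,n(\lambda)\big)^2\ge(1-|\alpha|)^2\mu^2\ge\frac{(1-|\alpha|)^2}{1+c_0^{-2}}(1+\mu^2),
\]
where the last step uses $\mu^2\ge c_0^2$. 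This yields (\ref{29/08/13(1)}) with $C=(1+c_0^{-2})^{1/2}/(1-|\alpha|)$, independent of $\mu$ and $\lambda$.

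With (\ref{29/08/13(1)}) in hand, Lemma \ref{30/08/13(11)} gives the uniform resolvent bound over the right half-plane, so the Gearhart--Pr\"uss--Greiner theorem forces $s({\cal A})<0$ and hence exponential stability of $e^{t{\cal A}}$; Theorem \ref{16/08/2013(100)} then delivers the unique stationary solution. I expect the main obstacle to be precisely the uniform estimate of the preceding two paragraphs: the multiplier $n(\lambda)$ is $\lambda$-dependent and oscillatory, so without self-adjointness (which lets one replace operator-norm estimates by a clean spectral integral and reduce everything to a scalar lower bound driven by $\mathrm{Re}\,n(\lambda)\ge 1-|\alpha|$) controlling $|\lambda-n(\lambda)\mu|$ simultaneously over the entire right half-plane and the whole spectrum of $A$ would be considerably more delicate.
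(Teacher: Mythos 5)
Your proposal is correct, and its skeleton coincides with the paper's: the spectral location $\sigma({\cal A})\subset\{\mathrm{Re}\,\lambda<0\}$ from Proposition \ref{30/08/13(5)}, reduction of the uniform resolvent bound to a characteristic-operator estimate via Lemma \ref{30/08/13(11)}, the Gearhart--Pr\"uss--Greiner theorem, and finally Theorem \ref{16/08/2013(100)}. Where you genuinely diverge is in how the key uniform estimate is verified. The paper factors $\Delta(\lambda)^{-1}=n(\lambda)^{-1}R(z,A)$ with $z=\lambda/n(\lambda)$, uses $\mathrm{Re}\,n(\lambda)\ge 1-|\alpha|$ to force $|\arg n(\lambda)|<\theta<\pi/2$ and hence $z$ into a fixed sector $\Sigma=\{|\arg z|<\pi/2+\theta\}$ disjoint from $\sigma(A)\subset(-\infty,-c_0]$, and then invokes the self-adjoint resolvent bound $\|R(z,A)\|_{{\mathscr L}(H)}\le 1/\mathrm{dist}(\sigma(A),\Sigma)$ to obtain $\|\Delta(\lambda)^{-1}\|_{{\mathscr L}(H)}\le (d(1-|\alpha|))^{-1}$. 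You instead apply the spectral theorem directly and reduce everything to the scalar inequality $1+\mu^2\le C^2|\lambda-n(\lambda)\mu|^2$ for $\mu\le -c_0$, $\mathrm{Re}\,\lambda\ge 0$, proved by taking real parts; I checked this computation and it is valid, since $x\ge 0$ and $-\mu\,\mathrm{Re}\,n(\lambda)\ge|\mu|(1-|\alpha|)$ together give $|\lambda-n(\lambda)\mu|\ge(1-|\alpha|)|\mu|$. Both routes rest on the same two facts (self-adjointness of $A$ and $\mathrm{Re}\,n(\lambda)\ge 1-|\alpha|>0$), but yours has a small advantage of completeness: Lemma \ref{30/08/13(11)} requires the graph-norm estimate (\ref{29/08/13(1)}), i.e.\ control of $\|x\|_{{\mathscr D}(A)}$, whereas the paper's displayed bound controls only $\|\Delta(\lambda)^{-1}\|_{{\mathscr L}(H)}$, i.e.\ $\|x\|_H$; upgrading to the graph norm requires an additional (standard but unstated) step, e.g.\ uniform boundedness of $zR(z,A)$ for $z$ in the sector to control $\|Ax\|_H$. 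Your spectral-integral argument delivers (\ref{29/08/13(1)}) in exactly the form the lemma demands, with the explicit constant $C=(1+c_0^{-2})^{1/2}/(1-|\alpha|)$, so no gap remains.
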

\begin{proof}
We show that under the conditions  in Proposition \ref{02/09/13(60)}, the associated solution semigroup $e^{t{\cal A}}$, $t\ge 0$, of (\ref{11/08/2013(305)}) is exponentially stable.

Indeed, it is clear that the inverse of the characteristic operator $\Delta(\lambda)= \lambda I - n(\lambda)A$ exists and $\Delta(\lambda)^{-1}\in {\mathscr L}(H)$ whenever 
\[
n(\lambda)\not= 0\hskip 20pt\hbox{and}\hskip 20pt \Big(\frac{\lambda}{n(\lambda)} I - A\Big)^{-1}\in {\mathscr L}(H).\]
In this case, the inverse is actually given by 
\begin{equation}
\label{02/09/13(50)}
\Delta(\lambda)^{-1} =\frac{1}{n(\lambda)}\Big(\frac{\lambda}{n(\lambda)} I - A\Big)^{-1}.
\end{equation}
Let $z =\lambda/n(\lambda)$ and $R(z, A) = (\frac{\lambda}{n(\lambda)} I - A)^{-1}$. We shall study the operator $R(z, A)$ with Re$\,\lambda>0$. We  show that for Re$\,\lambda>0$, $z\in \Sigma\subset \rho(A)$ where 
\[
\Sigma := \Big\{\lambda\in {\mathbb C}:\, |\hbox{arg}\,\lambda|<\frac{\pi}{2} + \theta\Big\}\hskip 20pt \hbox{for some}\hskip 20pt \theta\in (0, \pi/2).\]
 Let us denote $\lambda=x+iy\in {\mathbb C}$ and assume $x>0$. By definition we have
\[
\hbox{Im}\,n(\lambda) = \alpha e^{-rx}\sin ry\hskip 20pt \hbox{and}\hskip 20pt \hbox{Re}\,n(\lambda)= 1 +  \alpha e^{-rx}\cos ry.\]
Since $|e^{-rx}\sin ry|\le 1$ and 
$\alpha e^{-rx}\cos ry\le |\alpha|<1$, we obtain that  
\[
1-|\alpha|<|1 + \alpha  e^{-rx}\cos ry|,\]
and further 
\[
\frac{|\hbox{Im}\,n(\lambda)|}{|\hbox{Re}\,n(\lambda)|}\le \frac{|\alpha|}{ 1 - |\alpha|}<\infty.\]
This means that 
\[
|\hbox{arg}\,n(\lambda)|<\theta<\frac{\pi}{2}\hskip 20pt \hbox{and}\hskip 20pt |\hbox{arg}\,z|< \frac{\pi}{2}+\theta<\pi.\] 
By assumption, $A$ is a self-adjoint operator  so that we can  obtain from the spectral theory of operators (see Kato   \cite{kato80}, Section V. 3.8) that
\begin{equation}
\label{02/09/13(52)}
\|R(z, A)\|_{{\mathscr L}(H)}=\sup_{a\in \sigma(A)}\frac{1}{|a-z|}\le \frac{1}{d},
\end{equation}
where $d=\hbox{dist}(\sigma(A), \Sigma)>0$. Thus both (\ref{02/09/13(50)}) and  (\ref{02/09/13(52)}) imply that  
\[
\|\Delta(\lambda)^{-1}\|_{{\mathscr L}(H)}\le \frac{\|R(z, A)\|_{{\mathscr L}(H)}}{|n(\lambda)|}\le \frac{1}{d(1-|\alpha|)}<\infty.\]
Now we can use Lemma \ref{30/08/13(11)} and Gearhart-Pr\"uss-Greiner Theorem to conclude the exponential stability of the solution semigroup $e^{t{\cal A}}$, $t\ge 0$ and further obtain by Theorem \ref{16/08/2013(100)} a unique stationary solution of  (\ref{11/08/2013(305)}). The proof is thus complete.
\end{proof}

\begin{example}
\label{18/11/13(1)}
\rm
We consider a stochastic partial integro-differential equation with delays in the highest-order derivatives,
\begin{equation}
\label{22/08/13(7078)}
\begin{cases}
\displaystyle\frac{\partial y(t, x)}{\partial t} = \frac{\partial^2 y(t, x)}{\partial x^2} +\alpha    \frac{\partial^2 y(t-r, x)}{\partial x^2} + f(x)\dot B(t),\,\,\,\,t\ge 0,\,\,\,\,x\in {\cal O},\\
y(0, \cdot) =\phi_0(\cdot)\in W^{1, 2}_0({\cal O}; {\mathbb R}),\\
y(t, \cdot)=\phi_1(t, \cdot)\in W^{1, 2}_0({\cal O}; {\mathbb R})\cap W^{2, 2}({\cal O}; {\mathbb R}),\,\,\,\,\hbox{a.e.}\,\,\,\,t\in [-r, 0).
\end{cases} 
\end{equation}
Here ${\cal O}$ is a bounded open subset of ${\mathbb R}^n$ with regular boundary $\partial {\cal O}$, $\alpha\in {\mathbb R}$, $r>0$ and $f\in L^{2}({\cal O}; {\mathbb R})$.

By analogy with Example 5.2, we can re-write (\ref{22/08/13(7078)}) as a stochastic  initial boundary problem  (\ref{11/08/2013(305)}) in the Hilbert space $H= L^2({\cal O}; {\mathbb R})$ to obtain a solution defined in $[0, \infty)$. In particular, since $A= \partial^2/\partial x^2$ is a self-adjoint and negative operator and its spectrum satisfies $\sigma(A) = \sigma_P(A) \subset (-\infty, -c_0]$ for some $c_0>0$. Then by Proposition \ref{02/09/13(60)} and a direct computation, we may obtain that when $|\alpha |<1,$
 the associated solution semigroup of  (\ref{22/08/13(7078)}) is exponentially stable, and further by Theorem \ref{16/08/2013(100)} the equation (\ref{22/08/13(7078)}) has a unique stationary solution.
\end{example}

\section{Appendix}
{\bf Proof of Proposition 1.1}. 
For fixed $T\ge 0$ and any $y\in C([-r, T]; Z)$, one can get by using (\ref{14/10/13(1)}), H\"older inequality and Fubini's theorem that
\begin{equation}
\label{14/10/13(2)}
\begin{split}
\int^T_0 \|Fy_t\|^2_X dt &= \int^T_0 \Big\|\int^0_{-r} d\eta(\theta)y(t+\theta)\Big\|^2_X dt\\
&\le \int^T_{0}\Big(\int^0_{-r} \|y(t+\theta)\|_Z d|\eta|(\theta)\Big)^2dt\\
&\le |\eta|([-r, 0]) \int^T_0 \int^0_{-r} \|y(t+\theta)\|_Z^2 d|\eta|(\theta) dt\\
&\le  |\eta|([-r, 0]) \int^0_{-r} \int^T_{-r} \|y(t)\|^2_Z dt d|\eta|(\theta)= |\eta|([-r, 0])^2 \int^T_{-r} \|y(t)\|^2_Z dt,
\end{split}
\end{equation}
where  $|\eta|([-r, 0])$ is the total variation of $\eta$ on $[-r, 0]$.
Since $C([-r, T]; Z)$ is dense in $L^2([-r, T]; Z)$,  the delay operator ${F}$  is extendible so that (\ref{14/10/13(2)}) remains true for all $y\in L^2([-r, T]; Z)$
and the positive constant $C$ in (\ref{14/10/13(3)}) is  given by $C= |\eta|([-r, 0])^2>0$.

\noindent {\bf Proof of Proposition \ref{14/08/2013(70)}}.
The equation (\ref{28/07/2013(2)}) can be equivalently written as
\begin{equation}
\label{28/07/2013(10)}
\lambda \phi_1(0) -A\phi_1(0) -F\phi_1=\psi_0,
\end{equation}
\begin{equation}
\label{28/07/2013(4)}
\lambda \phi_1(\theta) -{d\phi_1(\theta)}/{d\theta}= \psi_1(\theta)\hskip 15pt \hbox{for}\,\,\,\,\theta\in [-r, 0],
\end{equation}
and further (\ref{28/07/2013(4)}) is equivalent to (\ref{28/07/2013(5)}). Hence if (\ref{28/07/2013(2)}) holds we deduce that $\phi_1(0)\in {\mathscr D}(A)$ and that (\ref{28/07/2013(1)}) is true by virtue of (\ref{28/07/2013(10)}) and (\ref{28/07/2013(5)}). 

Conversely, if $\phi_0\in {\mathscr D}(A)$ then $\phi_1$ defined by (\ref{28/07/2013(20)}) belongs to $W^{1, 2}([-r, 0]; Z)$. If, in addition, $\phi_1(0)=\phi_0$ satisfies (\ref{28/07/2013(1)}) then from (\ref{28/07/2013(20)}) we get 
\begin{equation}
\label{28/07/2013(50)}
\begin{split}
\lambda \phi_1(0) -A\phi_1(0)&= F(e^{\lambda\cdot})\phi_1(0) + \psi_0 + F\Big(\int^0_\cdot e^{\lambda(\cdot-\tau)}\psi_1(\tau)d\tau\Big)\\
&= F(e^{\lambda\cdot})\phi_1(0) + \psi_0 + F\Big(\int^0_\cdot e^{\lambda(\cdot-\tau)}\Big[\lambda\phi(\tau) - \frac{d\phi_1(\tau)}{d\tau}\Big]d\tau\Big)\\
&= F(e^{\lambda\cdot})\phi_1(0) + \psi_0 -F[e^{\lambda\cdot}\phi_1(0)] + F\phi_1\\
&= \psi_0 +F\phi_1,
\end{split}
\end{equation}
that is, 
\[
\lambda \phi_1(0) -A\phi_1(0) -F\phi_1 =\psi_0,\]
which is exactly the relation (\ref{28/07/2013(2)}) and the proof is thus complete.

\begin {thebibliography}{17}

\bibitem{aapr80} A. Ardito and P. Ricciardi. Existence and regularity for linear delay partial differential equations. {\it Nonlinear Anal.} {\bf 4}, (1980), 411--414.

\bibitem{absp05} A. B\'atkai and S. Piazzera. {\it Semigroups for Delay Equations.} A K Peters, Wellesley, Massachusetts, (2005).

\bibitem{JC86} J. Conway. {\it Functions of One Complex Variable.} Graduate Texts in Math. New York, Springer-Verlag, (1986). 

\bibitem{Gdbkkes84(1)} G. Di Blasio, K. Kunisch and E. Sinestrari. $L^2$-regularity for parabolic partial integrodifferential equations with delay in the highest-order derivatives. {\it J. Math. Anal. Appl.} {\bf 102}, (1984), 38--57.

\bibitem{Gdbkkes85(2)} G. Di Blasio, K. Kunisch and E. Sinestrari. Stability for abstract linear functional differential equations. {\it Israel J. Math.} {\bf 50}, (1985), 231--263.

\bibitem{kern00} K. Engel and R. Nagel. {\it One-Parameter Semigroups for Linear Evolution Equations}. Graduate Texts in Mathematics, {\bf 194}, Springer-Verlag, New York, Berlin, (2000).

\bibitem{jj1991} J. Jeong. Stabilizability of retarded functional differential equation in Hilbert space. {\it Osaka J. Math.} {\bf 28}, (1991), 347--365.

\bibitem{jjsnht1993} J. Jeong, S. Nakagiri and H. Tanabe. Structural operators and semigroups associated with functional differential equations in Hilbert spaces. {\it Osaka J. Math.} {\bf 30}, (1993), 365--395.

  \bibitem{kato80} T. Kato.  {\it Perturbation Theory for Linear Operators.} Springer-Verlag, New York,  (1980).

  \bibitem{jllem72} J. L. Lions and E. Magenes. 
 {\it Probl\`emes aux Limites non Homog\`enes et Applications.} Dunod, Paris, (1968).

\bibitem{kl08(2)} K. Liu. Stationary solutions of retarded Ornstein-Uhlenbeck processes in Hilbert spaces. {\it Statist. Probab. Letts.} {\bf 78}, (2008), 1775--1783.

\bibitem{kl09(2)} K. Liu. Retarded stationary Ornstein-Uhlenbeck processes driven by L\'evy noise and operator self-decomposability. {\it Potential Anal.} {\bf 33},  (2010), 291--312.

\bibitem{kl11} K. Liu. A criterion for stationary solutions of retarded linear equations with additive noise. {\it Stoch. Anal. Appl.} {\bf 29}, (2011), 799--823.

\bibitem{kl11000} K. Liu. On regularity property of retarded Ornstein-Uhlenbeck processes in Hilbert spaces.  {\it J. Theoretical Probab.} {\bf 25}, (2012), 565--593.

\bibitem{snht96} S.  Nakagiri and H. Tanabe.  Structural operators and eigenmanifold decomposition for functional differential equations in Hilbert spaces. {\it J. Math. Anal. Appl.} {\bf 204},  (1996), 554--581.

\bibitem{jvn96} J. van Neerven. {\it The Asymptotic Behaviour of Semigroups of Linear Operators}. Theory Adv. Appl. {\bf 88}, Birkh\"auser Verlag, Basel, (1996).

\bibitem{ap83} A. Pazy. {\it Semigroups of Linear Operators and Applications to Partial Differential Equations}. Appl. Math. Sci. {\bf 44}, Springer-Verlag, New York, (1983).

\bibitem{ht1979} H. Tanabe. {\it Equations of Evolution}. Pitman, New York, (1979).

\bibitem{ht88(1)} H. Tanabe. On fundamental solution of differential equation with time delay in Banach space. {\it Proc. Japan Acad.} {\bf 64}, (1988), 131--180.

\bibitem{ht88(2)} H. Tanabe. Structural operators for linear delay-differential equations in Hilbert space. {\it Proc. Japan Acad.} {\bf 64}, (1988), 263--266.

\bibitem{ht1997} H. Tanabe. {\it Functional Analytic Methods for Partial Differential Equations}. Dekker, New York, (1997).

\end{thebibliography}

\end{document}